\documentclass[11pt, twoside, a4paper]{article}
\usepackage{amsmath,amssymb,amsthm,amscd,appendix,color,epsfig,eucal,enumitem,dsfont,graphics,graphicx,latexsym,verbatim, hyperref}
\addtolength{\hoffset}{-.6cm} \addtolength{\textwidth}{1.2cm}
\addtolength{\voffset}{-.5cm} \addtolength{\textheight}{1cm}
\pagestyle{myheadings}
\markboth{\textsc{\footnotesize{Garibaldi, Petite and Thieullen}}}
{\textsc{\footnotesize{Calibrated configurations in almost-periodic environments}}}

\newtheorem{theorem}{Theorem}
\newtheorem{corollary}[theorem]{Corollary}
\newtheorem{lemma}[theorem]{Lemma}
\newtheorem{definition}[theorem]{Definition}

\newtheorem{notation}[theorem]{Notation}
\newtheorem{proposition}[theorem]{Proposition}
\newtheorem{example}[theorem]{Example}

\newcommand{\keywords}[1]{\par\addvspace\baselineskip\noindent\textbf{Keywords:}\enspace\ignorespaces#1}
\newcommand{\AMSclassification}[1]{\par\addvspace\baselineskip\noindent\textbf{Mathematical subject classification:}\enspace\ignorespaces#1}

\begin{document}
\title{Calibrated configurations for Frenkel-Kontorova type models in almost-periodic environments}
\author{
\small{Eduardo Garibaldi}\thanks{supported by FAPESP 2009/17075-8, CAPES-COFECUB 661/10 and Brazilian-French Network in Mathematics}\\
\footnotesize{Departamento de Matem\'atica}\\
\footnotesize{Universidade Estadual de Campinas}\\
\footnotesize{13083-859 Campinas - SP, Brasil}\\
\footnotesize{\texttt{garibaldi@ime.unicamp.br}}
\and
\small{Samuel Petite}\thanks{supported by FAPESP 2009/17075-8, CAPES-COFECUB 661/10 and ANR SUBTILE 0879. }\\
\footnotesize{LAMFA,  CNRS, UMR 7352 }\\
\footnotesize{Universit\'e de Picardie Jules Verne}\\
\footnotesize{80000 Amiens, France}\\
\footnotesize{\texttt{samuel.petite@u-picardie.fr}}
\and
\small{Philippe Thieullen}\thanks{supported by FAPESP 2009/17075-8}\\
\footnotesize{Institut de Math\'ematiques, CNRS, UMR 5251}\\
\footnotesize{Universit\'e Bordeaux 1}\\
\footnotesize{F-33405 Talence, France}\\
\footnotesize{\texttt{Philippe.Thieullen@math.u-bordeaux1.fr}}
}
\date{\today}
\maketitle

\begin{abstract}
The Frenkel-Kontorova model describes how an infinite chain of atoms minimizes the total energy of the system  when the energy takes into account the interaction of nearest neighbors as well as the interaction with an exterior environment. An almost-periodic environment leads to consider a family of interaction energies which is stationary with respect to a minimal topological dynamical system. We introduce, in this context, the notion of calibrated configuration (stronger than the standard minimizing  condition) and, for continuous superlinear interaction energies, we show the existence of these configurations for some environment of the dynamical system.  Furthermore, in one dimension, we give sufficient conditions on the family of interaction energies to ensure, for any environment, the existence of calibrated configurations when the underlying dynamics is uniquely ergodic. The main mathematical tools for this study are developed in the frameworks of discrete weak KAM 
theory, Aubry-Mather theory and  spaces of Delone sets.

\keywords{almost-periodic environment, Aubry-Mather theory, calibrated configuration, Delone set, Frenkel-Kontorova model, Ma\~n\'e potential, Mather set, minimizing holonomic probability, weak KAM theory}

\AMSclassification{37B50, 37J50, 37N20, 49L20, 49L25, 52C23}
\end{abstract}

\section{Introduction}

A {\it minimizing configuration} $\{x_k\}_{k\in\mathbb{Z}}$ for an {\it interaction energy} $E:\mathbb{R}^d \times \mathbb{R}^d \to \mathbb{R}$ is a chain of points in  $\mathbb{R}^d$ arranged so that the energy of each finite segment $(x_m,x_{m+1},\ldots,x_n)$ cannot be lowered by changing the configuration inside the segment  while fixing the two boundary points. Define
\begin{gather*}
E(x_m,x_{m+1},\ldots,x_n) := \sum_{k=m}^{n-1} E(x_k,x_{k+1}).
\end{gather*}
Then $\{x_k\}_{k\in\mathbb{Z}}$ is said to be minimizing if, for all $ m<n $, for all $ y_m,y_{m+1},\ldots,y_n \in \mathbb{R}^d $ satisfying $y_m=x_m$ and $y_n=x_n$, one has
\begin{equation}\label{equation:minimizing}
 E(x_m,x_{m+1},\ldots,x_n) \leq E(y_m,y_{m+1},\ldots,y_n).
\end{equation}
If the interaction energy is $C^0$, {\it coercive} and {\it translation periodic},
\begin{gather}
\lim_{R\to+\infty}\inf_{\|y-x\|\geq R} E(x,y) = +\infty, \label{equation:DefinitionCoercive} \\
\forall \, t \in \mathbb{Z}^d, \ \forall \, x,y\in\mathbb{R}^d, \quad E(x+t,y+t)=E(x,y), \label{equation:DefinitionPeriodic}
\end{gather}
it is easy to show (see \cite{GaribaldiThieullen2011_01}) that minimizing configurations do exist. If $d=1$ and $E$ is a smooth
{\it strongly twist} translation periodic interaction energy,
\begin{equation}
\label{equation:StronglyTwistProperty}
\frac{\partial^2 E}{\partial x\partial y} \leq -\alpha <0,
\end{equation}
a minimizing configuration admits in addition a rotation number (see Aubry and Le Daeron \cite{AubryLeDaeron1983_01}). The interaction energy $E$ is supposed to model the interaction between two successive points as well as the interaction between the chain and the environment.

For environments which are aperiodic, namely, with trivial translation group, few results are known (see, for instance, \cite{DelaLlaveSu, GambaudoGuiraudPetite2006_01, vanErp}). If $d=1$ and $E$ is a twist interaction energy describing a quasicrystal environment, Gambaudo, Guiraud and Petite \cite{GambaudoGuiraudPetite2006_01} showed that minimizing configurations do exist, they all have a rotation number and any prescribed real number is the rotation number of a minimizing configuration.

We shall make slightly more general assumptions on the properties of $E$.
We say that $E$ is {\it translation bounded} if
\begin{equation}
\label{equation:TranslationBounded}
\forall \, R>0, \quad \sup_{\|y-x\| \leq R} E(x,y) < +\infty,
\end{equation}
\noindent  {\it translation uniformly continuous} if
\begin{equation}
\label{equation:TranslationContinuous}
\forall \, R>0,  \quad\text{$E(x,y)$ is uniformly continuous in $\|y-x\| \leq R$},
\end{equation}
\noindent and {\it superlinear} if
\begin{equation}
\label{equation:SuperLinear}
\lim_{R\to+\infty} \ \inf_{\|y-x\| \geq R} \ \frac{E(x,y)}{\|y-x\|} =+\infty.
\end{equation}

In a parallel work (see \cite{GPT}), we discuss the existence of semi-infinite minimizing configurations in any dimension for a superlinear, translation bounded and translation uniformly continuous $E$. It is not clear that there exist bi-infinite minimizing configurations in this general context.

We call {\it ground energy} the lowest energy per site for all configurations
\begin{equation}
\bar E := \lim_{n\to+\infty} \ \inf_{x_0,\ldots,x_n} \frac{1}{n} E(x_0,\ldots,x_n).
\end{equation}
A configuration $\{x_n\}_{n\in\mathbb{Z}}$ is {\it calibrated} at the level $\bar E$ if, for every $k<l$,
\begin{gather}
\label{equation:CalibratedConfiguration}
E(x_k,\ldots,x_l) - (l-k)\bar E \leq \inf_{n \geq1} \ \inf_{y_0=x_k,\ldots,y_n=x_l} \big[E(y_0,\ldots,y_n) -n\bar E\big].
\end{gather}
Notice that the number of sites on the right hand side is arbitrary. A calibrated configuration is obviously minimizing; the converse is false in general. More generally, a configuration which is calibrated at some level $c$ (replace $\bar E$  by $c$ in (\ref{equation:CalibratedConfiguration})) is also minimizing.

If $d\geq1$ and $E$ is $C^0$, coercive and translation periodic (conditions (\ref{equation:DefinitionCoercive}) and~(\ref{equation:DefinitionPeriodic})), an argument using the notion of weak KAM solutions as in \cite{Gomes2005_01,Fathi2010_01,GaribaldiThieullen2011_01} shows that there exist calibrated configurations at the level $\bar E$. Conversely, if $d=1$ and $E$ is twist translation periodic, every minimizing configuration is calibrated for some modified energy $E_\lambda(x,y)=E(x,y)-\lambda(y-x)$, $\lambda\in\mathbb{R}$, with ground energy $\bar E(\lambda)$. If $d=1$ and $E$ is arbitrary (at least translation bounded, translation uniformly continuous and superlinear but not translation periodic), it is not known in general that a calibrated configuration does exist.

In order to give a positive answer to the question of the existence of calibrated configurations, we will consider in this paper an
interaction energy which has  almost periodic behavior. This leads to look at a family of interaction energies parameterized by a minimal topological dynamical system. Such an approach is similar  to studies for the Hamilton-Jacobi equation (see, for instance, \cite{DaviniSiconolfi2009,DaviniSiconolfi2011,  DaviniSiconolfi2012, LionsSouganidis}), where a stationary ergodic setting has been taken 
into account.

Concretely, we  will  assume there exists a family of interaction energies $\{E_\omega\}_{\omega}$ depending on an environment $\omega$. Let    $\Omega$ denote the collection of all possible  environments.  We assume that a chain $\{x_k+t\}_{k\in\mathbb{Z}}$ translated in the direction $t\in\mathbb{R}^d$ and interacting with the environment $\omega$ has the same local energy that $\{x_k\}_{k\in\mathbb{Z}}$ interacting with the shifted environment $\tau_t(\omega)$, where $\{\tau_t:\Omega\to \Omega\}_{t\in\mathbb{R}^d}$ is supposed to be a group of bijective maps. More precisely, each environment $\omega$ defines an interaction $E_\omega(x,y)$ which is assumed to be {\it topologically stationary} in the following sense
\begin{equation}
\label{equation:Equivariance}
\forall \, \omega\in \Omega, \ \forall \, t\in \mathbb{R}^d, \ \forall \, x,y \in \mathbb{R}^d, \quad E_\omega(x+t,y+t) = E_{\tau_t(\omega)}(x,y).
\end{equation}

In order to ensure the topological stationarity, the interaction energy will be supposed to have a {\it Lagrangian form}.
Formally, we will use the following definition.

\begin{definition}\label{notacao}
Let $\Omega$ be a compact metric space.
\begin{enumerate}
\item An almost periodic environment is a couple $\big(\Omega,\{\tau_t\}_{t\in\mathbb{R}^d}\big)$, 
where  $\{\tau_t\}_{t\in\mathbb{R}^d}$ is  a minimal $\mathbb{R}^d$-action, that is, a family of 
homeomorphisms $\tau_t:\Omega\to\Omega$ satisfying 
\par-- $ \tau_s\circ \tau_t = \tau_{s+t}$ for all $s,t \in \mathbb{R}^d$  (the cocycle property),
\par-- $\tau_t(\omega)$ is jointly continuous with respect to $(t,\omega)$,
\par-- $\forall \, \omega \in\Omega, \ \{\tau_t(\omega)\}_{t\in\mathbb{R}^d}$ is dense in $\Omega$.

\item A family of interaction energies $\{E_\omega\}_{\omega\in\Omega}$ is said to derive from a Lagrangian if there exists a continuous  function  $L:\Omega\times\mathbb{R}^d \to \mathbb{R}$ such that
\begin{equation}\label{Lagrangianform}
\forall \, \omega \in\Omega, \ \forall \, x,y \in \mathbb{R}^d, \quad E_\omega(x,y):=L(\tau_x(\omega),y-x).
\end{equation}
\item An almost periodic interaction model is the set of data $(\Omega,\{\tau_t\}_{t\in\mathbb{R}^d},L)$  where $(\Omega,\{\tau_t\}_{t\in\mathbb{R}^d})$ is an almost period environment and $L$ is a continuous function on $\Omega \times \mathbb{R}^d$.
\end{enumerate}
\end{definition}

Notice  that  the expression ``almost periodic" shall not be understood in the sense of H.  Bohr.  The almost periodicity according to 
Bohr is canonically relied to the uniform convergence. See \cite{AuslanderHahn} for a discussion on the different concepts of almost periodicity in conformity with the uniform topology or with the compact open topology.

Because of the particular form~\eqref{Lagrangianform} of  $E_\omega(x,y)$, these energies are translation bounded and translation continuous uniformly in $\omega$ and in $\|y-x\| \leq R$. We make precise the two notions of {\it coerciveness} and {\it superlinearity} in the Lagrangian form.

\begin{definition}
Let $(\Omega,\{\tau_t\}_{t\in\mathbb{R}^d},L)$ be an almost periodic interaction model. The Lagrangian $L$ is said to be coercive  if
\[
\lim_{R\to+\infty} \ \inf_{\omega\in \Omega} \ \inf_{\|t\|\geq R} L(\omega,t) = +\infty.
\]
$L$ is said to be superlinear if
\[
\lim_{R\to+\infty} \ \inf_{\omega\in \Omega} \ \inf_{\|t\|\geq R} \frac{L(\omega,t)}{\|t\|} = +\infty.
\]
$L$ is said to be ferromagnetic if, for every $\omega \in \Omega$, $E_\omega$ is of class $C^1(\mathbb{R}^d\times \mathbb{R}^d)$ and, for every $\omega \in \Omega$ and $x,y \in \mathbb{R}^d$,
\[
x\in\mathbb{R}^d \mapsto \frac{\partial E_\omega}{\partial y}(x,y) \in \mathbb{R}^d \quad \text{and} \quad y\in\mathbb{R}^d \mapsto \frac{\partial E_\omega}{\partial x}(x,y) \in \mathbb{R}^d
\]
are homeomorphisms.
\end{definition}

Note that if there is a constant $\alpha>0$ such that $\sum_{i,j=1}^d \frac{\partial^2 E_\omega}{\partial x\partial y}v_i v_j \leq -\alpha \sum_{i=1}^d v_i^2$ for all $\omega\in \Omega, x,y \in\mathbb{R}^d$, then $L$ is ferromagnetic and superlinear.

Let us illustrate our abstract notions by three typical examples.
\begin{example}\label{ex:1}
The classical periodic one-dimensional Frenkel-Kontorova model \cite{FrenkelKontorova1938} 
takes into account  the family of interaction energies
$E_\omega(x,y) = W(y-x) + V_\omega(x)$, with $ \omega \in  \mathbb{S}^1 $, written in Lagrangian form as
\begin{equation}
L(\omega,t) = W(t)+V(\omega) =  \frac{1}{2}|t-\lambda|^2 + \frac{K}{(2\pi)^2}\big( 1-\cos 2\pi \omega \big),
\end{equation}
\noindent where $\lambda$, $K$ are constants. Here $\Omega = \mathbb{S}^1$ and $\tau_t : \mathbb{S}^1 \to \mathbb{S}^1$ is given by  $\tau_t(\omega)=\omega+t$. We observe that $\{\tau_t\}_t$  is clearly minimal.
\end{example}

The following example comes from  \cite{GambaudoGuiraudPetite2006_01}.
\begin{example}\label{ex:3} Consider, for an irrational $\alpha \in (0,1)\setminus \mathbb{Q}$,  the set
$$ \omega(\alpha):= \{n \in \mathbb Z : \lfloor n \alpha \rfloor - \lfloor (n-1) \alpha \rfloor  = 1 \},$$
where $\lfloor \cdot \rfloor$ denotes the integer part. Notice that the  distance between two consecutive elements of $\omega(\alpha)$ is $\lfloor \frac 1 \alpha \rfloor$ or $\lfloor \frac 1 \alpha \rfloor +1$. Now let $U_{0}$ and $U_{1} $ be two real valued  smooth functions with  supports  respectively in $(0,\lfloor \frac 1 \alpha \rfloor)$ and $(0,\lfloor \frac 1 \alpha \rfloor+1)$. Let $V_{\omega(\alpha)}$ be the function defined by $V_{\omega(\alpha)}(x) = U_{\omega_{n+1} - \omega_{n} - \lfloor \frac 1 \alpha \rfloor}(x-\omega_{n})$, where $\omega_{n} < \omega_{n+1}$ are the two consecutive elements of the set $\omega(\alpha)$ such that $\omega_{n} \leq x < \omega_{n+1}$. The associated interaction energy is the function
\begin{equation}\label{equation:FKSturm}
E_{\omega(\alpha)}(x,y) = \frac 12 |x-y-\lambda|^2 + V_{\omega(\alpha)}(x).
\end{equation}
Given any relatively dense set $\omega'$ of the real line such that the distance between two consecutive points is in 
$\{\lfloor \frac 1 \alpha \rfloor, \lfloor \frac 1 \alpha \rfloor +1\}$, we can directly extend the previous definition and introduce the function $V_{{\omega'}}$. Let $\Omega'$ be the collection of all such sets~$\omega'$. Then, for any $x,t \in \mathbb R$, we have the relation $V_{\omega'}(x+t) = V_{\omega'-t}(x)$, where $\omega' -t$ denotes the set of elements of $\omega' \in \Omega'$ translated by $-t$.  In section \ref{section:Frenkel-Kontorova-quasicrystals}, we explain how to associate a compact metric space $\Omega \subset \Omega' $, where the group of translations acts minimally, as well as a Lagrangian from which the family $\{E_{\omega}\}_{\omega\in \Omega}$ derives.
\end{example}

As we shall see in section \ref{section:Frenkel-Kontorova-quasicrystals}, the construction  given in  example \ref{ex:3} extends to any \emph{quasiscrystal} $\omega$ of $\mathbb R$, namely,  to   any set  $\omega \subset \mathbb R$ which is relatively dense and uniformly discrete such that  the difference set $\omega-\omega$ is discrete and any finite pattern repeats with a positive frequency. We will later focus 
on a particular class of interaction models, called almost crystalline, which will include all quasicrystals.
An example of almost periodic interaction model on $\mathbb R$ which is not  almost crystalline  can be constructed in the following way.

\begin{example}\label{ex:2}
The underlying minimal flow is the irrational flow $\tau_t(\omega)=\omega+t(1,\sqrt{2})$ acting on $\Omega=\mathbb{T}^2$.
The family of interaction energies $E_\omega$ derives from the Lagrangian
\begin{equation}
\label{equation:FKAP}
L(\omega,t) := \frac{1}{2}|t-\lambda|^2  + \frac{K_1}{(2\pi)^2}\big(1-\cos 2\pi\omega_1 \big) + \frac{K_2}{(2\pi)^2} \big( 1 - \cos 2\pi\omega_2 \big),
\end{equation}
where $\omega =(\omega_{1}, \omega_{2}) \in \mathbb{T}^2$.
\end{example}

For an almost periodic interaction model, the notion of {\it ground energy} is given by the following definition.

\begin{definition} \label{definition:GlobalGroundEnergy}
We call ground energy of a family of interactions $\{E_\omega\}_{\omega \in \Omega}$ of Lagrangian form $L:\Omega \times \mathbb{R}^d \to \mathbb{R}$ the quantity
\[
\bar E := \lim_{n\to+\infty} \ \inf_{\omega \in \Omega} \ \inf_{x_0, \ldots,x_n \in \mathbb{R}^d} \ \frac{1}{n}   E_\omega(x_0,\ldots,x_n).
\]
\end{definition}

The above limit is actually a supremum by superadditivity and is finite as soon as $L$ is assumed to be coercive.
Besides, we clearly have \emph{a priori} bounds
\begin{equation}
\inf_{\omega\in\Omega} \ \inf_{x,y \in \mathbb{R}^d} E_\omega(x,y) \leq \bar E \leq \inf_{\omega \in \Omega} \ \inf_{x \in \mathbb{R}^d} E_\omega(x,x).
\end{equation}

The constant $\bar E$ plays the role of a drift. It is natural to modify the previous notion of minimizing configurations by saying that $\{x_n\}_{n\in\mathbb{Z}}$ is {\it calibrated at the level $\bar E$}
if $\sum_{k=m}^{n-1} [E(x_k,x_{k+1}) - \bar E]$ realizes the smallest signed distance between $x_m$ and $x_n$ for every $m<n$. Hence, we consider the following key notions borrowed from the weak KAM theory (see, for instance, \cite{Fathi1997}).

\begin{definition} \label{definition:ManeSubCocycle}
We call Ma\~n\'e potential in the environment $\omega$ the function on $\mathbb{R}^d \times \mathbb{R}^d$ given by
\[
S_\omega(x,y) :=  \inf_{n\geq1} \ \inf_{x=x_0,\ldots,x_n=y} \big[ E_\omega(x_0,\ldots,x_n)-n\bar E \big].
\]
We say that a configuration $\{x_k\}_{k\in\mathbb{Z}}$ is calibrated for $E_\omega$ (at the level $\bar E$) if
\[
\forall \, m < n, \quad S_\omega(x_m,x_n) = E_\omega(x_m,x_{m+1},\ldots,x_n) -(n-m)\bar E.
\]
\end{definition}

As discussed in section \ref{sec:calconf}, the  Ma\~n\'e potential for any almost periodic environment is always finite 
and shares the same properties as a pseudometric. In this context, the calibrated configurations may be seen as geodesics.
An important fact in the framework of almost periodic interaction models 
is that calibrated configurations always exist for some environments~$\omega$.
This is given below by the statement of the first main result of this paper. In section~\ref{sec:calconf}, we introduce minimizing holonomic probabilities, which correspond in our discrete setting to Mather measures, and we define the Mather set as the subset of 
$\Omega\times\mathbb R^d$ formed by the union of their supports (see definition~\ref{definition:HolonomicMeasure}). Denoted $\text{Mather}(L)$, we show that its projection by $pr \colon \Omega \times \mathbb R^d \to \Omega$ is contained into the set of environments for which there exists a calibrated configuration passing through the origin of $\mathbb{R}^d$. Thus, the next theorem extends Aubry-Mather theory of the classical periodic model.

\begin{theorem} \label{theorem:Main1}
Let $(\Omega,\{\tau_t\}_{t\in\mathbb{R}^d},L)$ be an almost periodic interaction model, with $L$ a $C^0$ superlinear function.
Then, for all $\omega\in pr(\text{Mather}(L))$, there exists a calibrated configuration $\{x_k\}_{k\in\mathbb{Z}}$ for $E_\omega$ such that $x_0=0$ and  $\sup_{k\in\mathbb{Z}} \|x_{k+1} - x_k\| < +\infty$.
\end{theorem}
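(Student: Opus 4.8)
The plan is to go through the Mather set, i.e. to produce a minimizing holonomic probability, extract from it a bi-infinite ``orbit'' inside its support by a recurrence argument in the spirit of Ma\~n\'e, and then read off a calibrated configuration through the origin from this orbit. Throughout I rely on the pseudo-metric structure of the Ma\~n\'e potential $S_\omega$ (Definition~\ref{definition:ManeSubCocycle}) recalled in section~\ref{sec:calconf}: finiteness, the triangle inequality $S_\omega(x,z)\le S_\omega(x,y)+S_\omega(y,z)$, and the one-step Bellman identity
\[
S_\omega(x,y)=\inf_{z\in\mathbb{R}^d}\bigl[E_\omega(x,z)-\bar E+S_\omega(z,y)\bigr],
\]
where the infimum is \emph{attained} because, by superlinearity~\eqref{equation:SuperLinear} of $L$ and continuity, $z\mapsto E_\omega(x,z)+S_\omega(z,y)$ is coercive and lower semicontinuous. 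From the triangle inequality one gets the free bound $S_\omega(x_m,x_n)\le\sum_{k=m}^{n-1}[E_\omega(x_k,x_{k+1})-\bar E]$ for any configuration, so $\{x_k\}_{k\in\mathbb{Z}}$ is calibrated for $E_\omega$ exactly when the reverse inequality holds for all $m<n$; by telescoping it suffices to arrange that no finite sub-segment can be strictly improved.

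Next I would set up the variational side. By coerciveness and superlinearity the set of holonomic probabilities (Definition~\ref{definition:HolonomicMeasure}) is tight and $\nu\mapsto\int L\,d\nu$ is lower semicontinuous, so there is a minimizing holonomic probability $\mu$, and by Definition~\ref{definition:GlobalGroundEnergy} together with superadditivity it realizes $\bar E=\int L\,d\mu$; in particular $\operatorname{Mather}(L)\neq\emptyset$. Using superlinearity one shows that $\operatorname{supp}(\mu)$ is compact and contained in $\Omega\times\{\,v:\|v\|\le R_0\,\}$ for a constant $R_0=R_0(L)$ independent of $\mu$ — this is the discrete analogue of the compactness of Mather's set, established in section~\ref{sec:calconf} — so that $\operatorname{Mather}(L)$ itself is compact and velocity-bounded by $R_0$. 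This compactness, combined with the equicontinuity of the families $E_\omega(\cdot,\cdot)$ and $S_\omega(\cdot,\cdot)$ on bounded sets, is what will let a limiting/diagonal argument pass calibration to the boundary and handle an arbitrary point of $pr(\operatorname{Mather}(L))$.

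Now the construction. Fix $\omega\in pr(\operatorname{Mather}(L))$ and choose $(\omega,v)\in\operatorname{supp}(\mu)$ for some minimizing holonomic $\mu$, which by ergodic decomposition we may take ergodic (an extreme point of the compact convex set of minimizing holonomic probabilities). On $\operatorname{supp}(\mu)$ consider the set-valued ``transfer'' dynamics sending $(\omega',v')$ to the pairs $(\tau_{v'}(\omega'),v'')$ lying in $\operatorname{supp}(\mu)$; holonomy of $\mu$ makes this well defined for $\mu$-a.e.\ point. A discrete recurrence argument \`a la Ma\~n\'e, applied to the ergodic minimizing $\mu$ and the zero-mean function $(\omega',v')\mapsto L(\omega',v')-\bar E$ (whose finite sums are precisely the quantities $E_{\omega'}(x_0,\dots,x_n)-n\bar E$), produces — first for $\mu$-a.e.\ point, then, by the compactness and equicontinuity above and the fact that $\operatorname{Mather}(L)$ is the closure of $\bigcup_\mu\operatorname{supp}(\mu)$, for the chosen point $(\omega,v)$ itself — a bi-infinite sequence $\{(\omega_k,v_k)\}_{k\in\mathbb{Z}}\subset\operatorname{supp}(\mu)$ with $\omega_0=\omega$, $v_0=v$, $\omega_{k+1}=\tau_{v_k}(\omega_k)$, and along which every finite sum $\sum_{k=m}^{n-1}[L(\omega_k,v_k)-\bar E]$ equals the corresponding Ma\~n\'e potential: indeed, a strict shortcut between two sites could be closed up, using a near-recurrent return orbit, into a holonomic probability of action strictly below $\bar E$, contradicting Definition~\ref{definition:GlobalGroundEnergy}. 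Set $x_0:=0$ and $x_{k+1}:=x_k+v_k$. The cocycle property gives $\tau_{x_k}(\omega)=\omega_k$, so by~\eqref{Lagrangianform} and the equivariance $S_{\omega_k}(a,b)=S_\omega(a+x_k,b+x_k)$ one obtains $S_\omega(x_m,x_n)=\sum_{k=m}^{n-1}[E_\omega(x_k,x_{k+1})-\bar E]$ for all $m<n$; hence $\{x_k\}_{k\in\mathbb{Z}}$ is calibrated for $E_\omega$, passes through $0$, and satisfies $\|x_{k+1}-x_k\|=\|v_k\|\le R_0$.

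The main obstacle is the recurrence step: converting a single minimizing holonomic probability into an \emph{honest} bi-infinite calibrated orbit through a prescribed support point, with \emph{exact} rather than merely $\ep$-approximate calibration at every scale. This is where the discrete weak KAM and Aubry--Mather machinery of section~\ref{sec:calconf} does the real work — in particular closing up finite near-optimal segments exactly by exploiting the attained infimum in the Bellman identity, and controlling the limit of finite pieces via the compactness of $\operatorname{Mather}(L)$ and the equicontinuity of $E_\omega$ and $S_\omega$. The compactness/velocity-bound of the Mather set (hence of the constructed configuration's jumps) is the second technical point, again resting squarely on the superlinearity hypothesis.
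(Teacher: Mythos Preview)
Your broad strategy --- produce a minimizing holonomic measure, show calibration on its support, then read off a bi-infinite configuration through a prescribed point of the projected Mather set --- matches the paper's. But several of the technical bridges you lean on are either misstated or absent, and the paper's actual mechanism is different from the one you sketch.

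First, a concrete error: you claim the one-step Bellman infimum $\inf_z[E_\omega(x,z)-\bar E+S_\omega(z,y)]$ is attained because the integrand is lower semicontinuous. In fact $S_\omega(z,y)=\Phi(\tau_z(\omega),y-z)$ and the paper only proves $\Phi$ is \emph{upper} semicontinuous (Lemma~\ref{lemma:CalibratedManeSubCocycle}); continuity of $S_\omega$ is neither asserted nor needed. Likewise your later appeal to ``equicontinuity of $S_\omega$ on bounded sets'' is unsupported --- the passage from almost-everywhere calibration to every support point goes through upper semicontinuity of $\Phi$, which gives the correct inequality direction for the closed set $\hat\Sigma=\{\Phi\ge\sum(\hat L-\bar L)\}$.

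Second, your ``recurrence \`a la Ma\~n\'e'' and ``set-valued transfer dynamics'' are not set up rigorously. A holonomic measure is not invariant for any map on $\Omega\times\mathbb{R}^d$; to invoke Birkhoff or Poincar\'e recurrence you need an honest dynamical system. The paper supplies one: disintegrate $\mu(d\omega,dt)=pr_*(\mu)(d\omega)P(\omega,dt)$ and form the Markov extension $\hat\mu$ on $\hat\Omega=\Omega\times(\mathbb{R}^d)^{\mathbb{N}}$, which is genuinely $\hat\tau$-invariant (Notation~\ref{notation:MarkovExtension}). Your closing-lemma heuristic (``a strict shortcut could be closed up into a holonomic probability of action below $\bar E$'') is plausible but requires exactly this framework to make the ``near-recurrent return'' precise, and you never construct it.

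Third, the paper does \emph{not} argue calibration by closing up shortcuts. Instead it shows that $\Phi$ is a \emph{Mather-calibrated subadditive cocycle} (Definition~\ref{definition:CalibratedSubCocycle}, Lemma~\ref{lemma:CalibratedManeSubCocycle}): the sup-inf formula (Proposition~\ref{proposition:SupInfFormula}) yields approximate continuous sub-solutions $u_p$, whose limsup gives a subadditive cocycle $U$ with $\int U(\omega,\sum t_k)\,d\hat\mu\ge 0$; since $\Phi\ge U$, the same holds for $\Phi$, and combined with $\Phi\le L-\bar L$ this forces $\Phi(\omega,\sum_{k<n}t_k)=\sum_{k<n}(\hat L-\bar L)\circ\hat\tau^k$ for $\hat\mu$-a.e.\ point. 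Upper semicontinuity of $\Phi$ then upgrades this to all of $\text{supp}(\hat\mu)$.

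Finally, to get \emph{bi}-infinite calibrated configurations through a prescribed $\omega$, the paper does not run a two-sided orbit directly (the Markov extension is one-sided). It defines closed full-measure sets $\hat\Omega_n$ where calibration holds on $[0,2n]$, shifts to $\hat\Omega'_n=\hat\tau^n(\hat\Omega_n)$ so that the origin sits in the middle, projects to $\Omega$, observes that the projections are closed (thanks to the uniform jump bound from superlinearity versus sublinearity of $\Phi$) and of full $pr_*(\mu)$-measure, hence contain $pr(\text{Mather}(L))$, and concludes by diagonal extraction. Your proposal asserts a bi-infinite sequence in $\text{supp}(\mu)$ through $(\omega,v)$ directly, without explaining how the backward direction is obtained.
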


This theorem states that, in the almost periodic case, there exist at least one environment and one calibrated configuration for that environment (and thus for any environment in its orbit).
It may happen that the projected Mather set does not meet every orbit of the system. Indeed, in the almost periodic Frenkel-Kontorova model described  in example \ref{ex:2}, for $\lambda=0$, we have  $\bar E=0$ which is attained by taking $x_n=0$ for every $n\in\mathbb{Z}$. In addition,  it is easy to check that the Mather set is reduced to the point $(0_{\mathbb{T}^2},0_{\mathbb{R}})$ and in particular the projected Mather set $\{0_{\mathbb{T}^2}\}$ meets a unique orbit.
This phenomenon is related to the fact that the minimum of a Lagrangian may not be reached for several orbits of the flow $\tau$ on $\Omega$. A similar case occurs when there is no  exact corrector for the homogenization of Hamilton-Jacobi equations in the stationary ergodic setting \cite{LionsSouganidis,DaviniSiconolfi2009}. At the difference with these works, in our context, we leave open the question whether for any environment there are ``approximated'' calibrated configurations (in a sense to define).

However, we shall see (theorem~\ref{theorem:MainContribution}) that, for a certain class of one-dimensional models with \emph{transversally constant  Lagrangians}, this symptom disappears and the projected Mather set meets any orbit of the system. Such a family of Lagrangians includes the ones of examples~\ref{ex:1} and~\ref{ex:3}. Notice that, for both examples, given any finite configuration, the interaction energy keeps the same value for infinitely many translated configurations. Indeed, take any translation by a multiple of the period in the example \ref{ex:1}, and, for example \ref{ex:3}, take a relatively dense set of translations provided by the collection of return times to the origin of the  irrational rotation of angle $\alpha$ on the circle (recall that this dynamical system is minimal). A transversally constant Lagrangian  is defined in order to share the same property. We postpone to section~\ref{section:Frenkel-Kontorova-quasicrystals},
more precisely, to definition \ref{definition:TransversallyConstant} the details of the technical notion of a transversally constant Lagrangian to be adopted in this article.

Let us also precise that we work on a class of environments more general than the usual one for one-dimensional quasicrystals, in particular,
more general than the one considered in \cite{GambaudoGuiraudPetite2006_01}. Furthermore, we slightly extend the strongly twist 
property~\eqref{equation:StronglyTwistProperty}, which is the main assumption in Aubry-Mather theory (\cite{AubryLeDaeron1983_01, Mather1982}). 
The weakly twist property  will allow  us to use, for example, $\frac{1}{4}|t-\lambda|^4$ instead of $\frac{1}{2}|t-\lambda|^2$ in 
example~\ref{ex:2}, which would be impossible with the strongly twist property \eqref{equation:StronglyTwistProperty}.
We formalize all these extensions  in the next definition.

\begin{definition} \label{definition:quasicrystal_WeaklyTwistProperty} \
Let $(\Omega,\{\tau_t\}_{t\in\mathbb{R}},L)$ be a one-dimensional almost periodic interaction model. 
\begin{itemize}
\item[--] $L$ is said to be weakly twist  if, for every $\omega\in\Omega$, 
$E_\omega(x,y)$ is $C^2$, and
\[
\forall \, x,y\in\mathbb{R}, \ \omega \in \Omega, \quad \frac{\partial^2  E_\omega}{\partial x\partial y}(x,\cdot) < 0 \quad\text{and}\quad
\frac{\partial^2  E_\omega}{\partial x\partial y}(\cdot,y) < 0 \quad\text{ a.e.}
\]
\item[--]  The interaction model $(\Omega,\{\tau_t\}_{t\in\mathbb{R}},L)$ is said to be  almost crystalline if 
\begin{enumerate}
\item $\{\tau_t\}_{t\in\mathbb{R}}$ is uniquely ergodic (with unique invariant probability measure~$\lambda$),
\item $L$ is superlinear and weakly twist,
\item $L$ is  locally transversally constant (as in definition \ref{definition:TransversallyConstant}).
\end{enumerate}
\end{itemize}
\end{definition}

We now state the second main result of this paper, which says that, in the case of almost crystalline interaction models, for any environment, there always exists a calibrated configuration passing close to the origin. This result may be seen as a consequence of the proof of theorem~\ref{theorem:Main1}, since the strategy to obtain it consists in arguing, mainly through a Kakutani-Rohlin tower description of the system 
$(\Omega, \{\tau_{t}\}_{t\in \mathbb R})$, that the corresponding projected Mather set intersects all orbits. A simpler version of the following theorem is given in corollary \ref{corollary:MainContribution}. 

\begin{theorem} \label{theorem:MainContribution}
Let $(\Omega,\{\tau_t\}_{t\in\mathbb{R}},L)$ be an almost crystalline   interaction model. Then the projected Mather set meets uniformly any orbit of the flow $\tau_t$. In particular, for every $\omega \in \Omega$, there exists a calibrated configuration for $E_\omega$ with bounded jumps and at a bounded distance from the origin uniformly in $\omega$, that is, a configuration $\{x_{k,\omega}\}_{k\in\mathbb{Z}}$ satisfying
\begin{enumerate}
\item \label{item:MainContribution_1} $\displaystyle \forall \, m < n, \quad S_\omega(x_{m,\omega},x_{n,\omega}) = \sum_{k=m}^{n-1} E_\omega(x_{k,\omega},x_{k+1,\omega}) -(n-m)\bar E$,
\item \label{item:MainContribution_2} $\displaystyle \sup_{\omega \in \Omega} \ \sup_{k\in\mathbb{Z}} \ |x_{k+1,\omega} - x_{k,\omega}| < +\infty, \qquad \sup_{\omega \in \Omega} \ |x_{0,\omega}| < +\infty$.
\end{enumerate}
\end{theorem}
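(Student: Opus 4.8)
The plan is to deduce Theorem~\ref{theorem:MainContribution} from Theorem~\ref{theorem:Main1} by establishing the sharper fact that, under the almost crystalline assumptions, $pr(\text{Mather}(L))$ \emph{meets every orbit of $\{\tau_t\}$ uniformly}: there is a constant $T$ such that $\{t\in\mathbb R:\tau_t(\omega)\in pr(\text{Mather}(L))\}$ is $T$-syndetic for every $\omega$. Once this is known, the statement of the theorem — calibrated configurations for every $\omega$, with uniformly bounded jumps and uniformly bounded distance from the origin — follows by a translation argument, as explained next. The degenerate phenomenon described after Theorem~\ref{theorem:Main1} (example~\ref{ex:2} with $\lambda=0$) shows that this uniform-meeting property genuinely fails without extra hypotheses, so the core of the proof is to exploit \emph{all three} almost crystalline assumptions — unique ergodicity, the weakly twist property, and local transversal constancy — through a Kakutani--Rohlin tower description of $(\Omega,\{\tau_t\})$.

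Granting the uniform-meeting property, first note that superlinearity of $L$ yields a constant $D$, depending only on $L$ and $\bar E$, that bounds the jumps of \emph{any} calibrated configuration: a single jump of size $R$ costs $E_\omega(x,y)-\bar E\ge R\,\phi(R)-\bar E$ with $\phi(R):=\inf_\omega\inf_{|v|\ge R}L(\omega,v)/|v|\to+\infty$, whereas connecting $x$ to $y$ by $\lceil R\rceil$ unit-size steps shows $S_\omega(x,y)\le CR$ (definition~\ref{definition:ManeSubCocycle}); calibration forces these two to be equal, so $R\le D$. In particular $\text{Mather}(L)$ is a compact subset of $\Omega\times[-D,D]$. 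Now fix $\omega\in\Omega$, choose $t_\omega$ with $|t_\omega|\le T$ and $\tau_{t_\omega}(\omega)\in pr(\text{Mather}(L))$, let $\{y_k\}$ be a calibrated configuration for $E_{\tau_{t_\omega}(\omega)}$ with $y_0=0$ given by Theorem~\ref{theorem:Main1}, and set $x_{k,\omega}:=y_k+t_\omega$. Topological stationarity~\eqref{equation:Equivariance} makes $E_\omega(x_{m,\omega},\dots,x_{n,\omega})=E_{\tau_{t_\omega}(\omega)}(y_m,\dots,y_n)$ and $S_\omega(x_{m,\omega},x_{n,\omega})=S_{\tau_{t_\omega}(\omega)}(y_m,y_n)$, so $\{x_{k,\omega}\}$ is calibrated for $E_\omega$; moreover $|x_{0,\omega}|=|t_\omega|\le T$ and all jumps are $\le D$, which is precisely conditions~\ref{item:MainContribution_1}--\ref{item:MainContribution_2}. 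The simpler corollary~\ref{corollary:MainContribution} is then immediate.

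For the uniform-meeting property I would use unique ergodicity and minimality to build refining Kakutani--Rohlin partitions $\Omega=\bigsqcup_j\{\tau_t(\xi):\xi\in B_j,\ 0\le t<h_j\}$ into finitely many flow-boxes over clopen bases $B_j$, with heights $h_j\to\infty$ and the transversal chosen fine enough that $L$ is constant in the transverse Cantor coordinate at the scale of one box, as in Definition~\ref{definition:TransversallyConstant}. Pick a minimizing holonomic probability $\mu$ (definition~\ref{definition:HolonomicMeasure}) and the calibrated configuration it carries, all of whose sites lie over $pr(\text{Mather}(L))$ by construction of the Mather set; this configuration has gaps $\le D$, so $pr(\text{Mather}(L))$ meets the orbit of each of its environments $D$-syndetically. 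To reach an arbitrary orbit, observe that, because $L$ is transversally constant at the working scale, the combinatorial data seen inside a fixed tower-box determine both $E_{(\cdot)}$ on the corresponding block and the holonomy constraint; hence $\mu$ can be transported along the transverse direction of any box it charges, producing again a minimizing holonomic probability, so the ``good'' transverse base points in that box — a clopen set of positive $\lambda$-measure — all belong, together with the relevant heights, to $pr(\text{Mather}(L))$. Given any $\omega$, its orbit enters this box within a bounded time and, by unique ergodicity of the first-return map, enters through a good base point after a further bounded time; from there it meets $pr(\text{Mather}(L))$ while flowing up. Adding these bounds gives the uniform constant $T$. The weakly twist hypothesis is used throughout to invoke Aubry--Mather ordering, which is what makes the transported configurations genuinely calibrated, not merely minimizing, and what keeps the tower combinatorics finite.

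I expect the main obstacle to be the transverse-transport step: proving that translating a minimizing holonomic probability across the transverse Cantor direction of a box preserves both the holonomy constraint and the value $\int L\,d\mu=\bar E$. Holonomy is not stable under arbitrary transverse translations — this is exactly why the Mather set of example~\ref{ex:2} with $\lambda=0$ degenerates to a point — so one must use that, at the chosen scale, $L$ and the return-translations entering the holonomy relation are functions of the combinatorial data alone (local transversal constancy), that the Kakutani--Rohlin tower has stationary combinatorics and every box is charged by the unique invariant measure (unique ergodicity), and that calibrated configurations are ordered and under uniform control (weakly twist). A secondary, softer point is converting ``$\mu$ transports on a clopen positive-measure set of base points'' into the single uniform constant $T$; this is handled by compactness of $\Omega$, minimality, and unique ergodicity of the first-return map to the box, which make both the time to enter the box and the gap between ``good'' entries uniformly bounded.
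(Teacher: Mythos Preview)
Your overall architecture is right and matches the paper: reduce to showing that $pr(\text{Mather}(L))$ meets every orbit with a uniform bound $T$, then translate the calibrated configuration supplied by Theorem~\ref{theorem:Main1}. The jump bound via superlinearity versus sublinearity of $S_\omega$ is also correct.

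The gap is exactly where you flag it: the transverse-transport step does not go through. A minimizing holonomic measure $\mu$ encodes a bi-infinite calibrated trajectory $(\tau_{x_k}(\omega))_k$ that visits \emph{infinitely many} towers of any fixed Kakutani--Rohlin partition. Local transversal constancy only tells you $E_\omega(x,y)=E_{\omega'}(x,y)$ for $|x|,|y|$ bounded by the size of the box containing $\omega,\omega'$; once $x_k$ exits that box, two base points $\omega,\omega'$ in the same clopen transversal generically follow \emph{different} itineraries under the first-return map, so the energies along the two translated configurations diverge and there is no reason for the transported object to remain holonomic or minimizing. Saying that ``the combinatorics determine the holonomy relation'' does not help, because the itinerary map $\Sigma_\alpha\to A^{\mathbb N}$ is not constant on any clopen set unless the flow is periodic. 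Your proposed use of twist (``makes the transported configurations calibrated'') does not repair this; twist gives ordering of minimizers, not invariance of calibration under transverse translation.

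The paper avoids transport entirely. Fix one $\omega_*$, take \emph{finite} global minimizers $(x^n_0,\ldots,x^n_n)$ of $E_{\omega_*}$, and prove (Proposition~\ref{proposition:RotationNumber}) that they are strictly monotone with bounded jumps and satisfy $\liminf_n \tfrac{1}{n}|x^n_n-x^n_0|=:\rho>0$. This ``positive weak rotation number'' is where all three almost-crystalline hypotheses are actually spent: twist gives the Aubry crossing inequalities (Lemmas~\ref{lemma:twistProperty}--\ref{lemma:StrictlyMonotone}), transversal constancy lets one slide subconfigurations between occurrences of the same pattern (Lemmas~\ref{lemma:StrictlyMonotoneBis}--\ref{lemma:EquiDistribution}), and unique ergodicity controls the transverse measures of the Kakutani--Rohlin bases (Lemmas~\ref{lemma:TransverseMeasure}--\ref{lemma:TransverseMeasure2}). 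With $\rho>0$ in hand, the empirical measures $\mu_{n,\omega_*}=\tfrac1n\sum_k\delta_{(\tau_{x^n_k}(\omega_*),\,x^n_{k+1}-x^n_k)}$ are tight, any weak limit $\mu_\infty$ is minimizing holonomic, and for every flow box $\tau[B_R\times\Xi]$ one gets $pr_*(\mu_\infty)(\overline{\tau[B_R\times\Xi]})\ge \rho\,\nu_\Xi(\Xi)>0$ directly from counting return times. Thus a \emph{single} minimizing measure already charges every flow box, so $pr(\text{Mather}(L))$ meets every orbit segment of length $2R$. The missing idea in your sketch is precisely this detour through finite minimizers and their rotation number, which replaces the ill-defined transverse transport by a density estimate on one well-chosen $\mu_\infty$.
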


The paper is organized as follows. In section~\ref{sec:mather}, we properly introduce the Mather set and we show that there exist 
calibrated configurations for almost periodic interaction models by giving the proof of theorem~\ref{theorem:Main1}. Besides, we gather, in 
section~\ref{sec:propcalconf}, ordering properties of one-dimensional calibrated configurations in the presence of the 
twist hypothesis  that will be useful for the demonstration of theorem~\ref{theorem:MainContribution}.
In section~\ref{sec:QCDelone}, we recall basic definitions and properties concerning Delone sets and specially quasicrystals.
In particular, strongly equivariant functions associated with a quasicrystal will serve as a prototype to our notion of locally transversally 
constant Lagrangian. Section~\ref{sec:hull} concerns the basic properties of flow boxes and locally transversally constant Lagrangians. Finally, section~\ref{sec:conf-cal} is devoted to the proof of theorem~\ref{theorem:MainContribution}. 

\section{Calibrated configurations}\label{sec:calconf}

\subsection{Mather set and existence of calibrated configurations}\label{sec:mather}

We show here that the Mather set describes the set of environments for which there exist calibrated configurations.
The Mather set is defined in terms of minimizing holonomic measures.  Let $\underline{\omega} \in \Omega $ be fixed. The ground energy (in the environment $\underline{\omega}$) measures the mean energy per site of a configuration $\{x_n\}_{n\geq0}$ which distributes in $\mathbb{R}^d$ so that $\frac{1}{n}E_{\underline{\omega}}(x_0,\ldots,x_n) \to \bar E$. Notice that the previous mean can be understood as an expectation of $L(\omega,t)$ with respect to a probability measure $\mu_{n,\underline{\omega}} := \frac{1}{n}\sum_{k=0}^{n-1}\delta_{({\tau_{x_k}(\underline{\omega}), \, x_{k+1} - \, x_k})}$:
\begin{equation}
\frac{1}{n} E_{\underline{\omega}}(x_0,\ldots,x_n) = \int\! L(\omega,t) \, \mu_{n,\underline{\omega}}(d\omega,dt).
\end{equation}
Notice also that $\mu_{n,\underline{\omega}}$ satisfies the following property of pseudoinvariance
\begin{equation}
\int\! f(\omega) \, \mu_{n,\underline{\omega}}(d\omega,dt) - \int\! f(\tau_t(\omega)) \, \mu_{n,\underline{\omega}}(d\omega,dt) = \frac{1}{n}\Big(f \circ \tau_{x_n}(\underline{\omega})- f \circ \tau_{x_0}(\underline\omega) \Big).
\end{equation}
This suggests to consider the set of all weak${}^*$ limits of $\mu_{n,\underline{\omega}}$ as $n\to+\infty$. Following~\cite{Mane}, we call these limit measures {\it holonomic probabilities}.

\begin{definition}
\label{definition:HolonomicMeasure}
A probability measure $\mu$ on $\Omega\times\mathbb{R}^d$ is said to be holonomic if
\[
\forall \, f \in C^0(\Omega), \quad \int\! f(\omega) \, \mu(d\omega,dt) = \int\! f(\tau_t(\omega)) \, \mu(d\omega,dt).
\]
Let $\mathbb{M}_{hol}$ denote the set of all holonomic probability measures.
\end{definition}

The set $\mathbb{M}_{hol}$ is certainly not empty since it contains any $\delta_{(\omega,0)}$, $\omega \in \Omega$.  It is then natural to look for holonomic measures that minimize $L$. We show that minimizing holonomic measures do exist and that the lowest mean value of $L$ is the ground energy.

\begin{proposition}[\bf The ergodic formula]
\label{proposition:MatherDefinition}
If $L$ is $C^0$ coercive, then 
\[
\bar E = \inf \Big\{ \int\! L \, d\mu : \mu \in \mathbb{M}_{hol} \Big\}, 
\]
and the infimun is attained by some holonomic probability measure.
\end{proposition}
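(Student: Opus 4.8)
The plan is to establish the two inequalities separately: first that $\bar E \le \int L\,d\mu$ for every $\mu\in\mathbb{M}_{hol}$, and then that there is a specific holonomic measure $\mu_*$ with $\int L\,d\mu_*\le\bar E$; combined, these give both the formula and the attainment. As a preliminary reduction I would note that $L$ is bounded below: it is continuous on the compact set $\Omega\times\overline B(0,R_0)$ for $R_0$ chosen by coercivity, and bounded below outside that ball by coercivity itself. Replacing $L$ by $L-\inf L$ changes $\bar E$ and every $\int L\,d\mu$ by the same constant, so I may assume $L\ge 0$. I also record, from the superadditivity of $a_n:=\inf_{\omega}\inf_{x_0,\ldots,x_n}E_\omega(x_0,\ldots,x_n)$ already noted after Definition~\ref{definition:GlobalGroundEnergy}, that $\bar E=\sup_{n\ge1}a_n/n$; in particular $a_n\le n\bar E$ for all $n$.

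For $\bar E\le\int L\,d\mu$, fix $\mu\in\mathbb{M}_{hol}$, and assume $\int L\,d\mu<+\infty$ (otherwise there is nothing to prove). Let $\nu:=pr_*\mu$ be the first marginal on $\Omega$ and $\mu=\int_\Omega\mu_\omega\,d\nu(\omega)$ a disintegration of $\mu$ over $\nu$, available since $\Omega$ and $\mathbb R^d$ are Polish, so that each $\mu_\omega$ is a probability on $\mathbb R^d$. The holonomy identity $\int f(\omega)\,d\mu=\int f(\tau_t(\omega))\,d\mu$ says precisely that $\nu$ is invariant under the Markov operator $Pf(\omega):=\int f(\tau_t(\omega))\,d\mu_\omega(t)$. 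Let $(\omega_k,t_k)_{k\ge0}$ be the stationary Markov chain with transition kernel $(\omega,dt)\mapsto\mu_\omega(dt)$ followed by $(\omega,t)\mapsto\tau_t(\omega)$, started from $\nu$; then each pair $(\omega_k,t_k)$ has law $\mu$. Putting $x_0:=0$ and $x_k:=t_0+\cdots+t_{k-1}$, the cocycle property gives $\omega_k=\tau_{x_k}(\omega_0)$, hence $E_{\omega_0}(x_0,\ldots,x_n)=\sum_{k=0}^{n-1}L(\omega_k,t_k)$, a nonnegative random variable of expectation $n\int L\,d\mu$. Therefore some realization produces an environment and a finite configuration with $E_{\omega_0}(x_0,\ldots,x_n)\le n\int L\,d\mu$, so $a_n\le n\int L\,d\mu$ for every $n$, and thus $\bar E=\sup_n a_n/n\le\int L\,d\mu$.

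For the reverse inequality and the attainment, I would run the usual compactness argument. For each $n$ pick $\omega_n$ and $x_0^n,\ldots,x_n^n$ with $\tfrac1n E_{\omega_n}(x_0^n,\ldots,x_n^n)\le a_n/n+\tfrac1n\le\bar E+\tfrac1n$, and set $\mu_n:=\tfrac1n\sum_{k=0}^{n-1}\delta_{(\tau_{x_k^n}(\omega_n),\,x_{k+1}^n-x_k^n)}$, so that $\int L\,d\mu_n=\tfrac1n E_{\omega_n}(x_0^n,\ldots,x_n^n)\le\bar E+\tfrac1n$. Since $L\ge0$ and $L$ is coercive, Markov's inequality yields $\mu_n(\Omega\times\{\|t\|\ge R\})\le(\bar E+1)/m(R)$ with $m(R):=\inf_\omega\inf_{\|t\|\ge R}L(\omega,t)\to+\infty$, so $\{\mu_n\}$ is tight; by Prokhorov a subsequence converges narrowly to a probability $\mu_*$ on $\Omega\times\mathbb R^d$. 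Passing to the limit in the pseudoinvariance identity, whose right-hand side is $O(1/n)$ because $f$ is bounded, shows $\mu_*$ is holonomic; and lower semicontinuity of $\mu\mapsto\int L\,d\mu$ for the nonnegative continuous $L$ (test against $\min(L,M)\in C_b(\Omega\times\mathbb R^d)$, then let $M\to+\infty$) gives $\int L\,d\mu_*\le\liminf_n\int L\,d\mu_n\le\bar E$. Combined with the first inequality applied to $\mu_*$, this forces $\int L\,d\mu_*=\bar E$ and exhibits $\mu_*$ as a minimizer.

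The \emph{main obstacle} is the non-compactness of the jump variable $t\in\mathbb R^d$: both the existence of the narrow limit $\mu_*$ and the fact that no mass escapes to infinity rely on coercivity through the tightness bound, and in the first inequality one must use $\int L\,d\mu<+\infty$ to guarantee that the random configuration built from $\mu$ has finite expected energy. The remaining ingredients — existence of a measurable disintegration, narrow continuity of $(\omega,t)\mapsto f(\tau_t(\omega))$, and lower semicontinuity of $\mu\mapsto\int L\,d\mu$ — are routine under the standing hypotheses that $\Omega$ is a compact metric space, $\tau$ is jointly continuous, and $L$ is continuous.
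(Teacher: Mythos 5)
Your proof is correct, and it takes a genuinely different, more elementary route than the paper's. The paper reaches Proposition \ref{proposition:MatherDefinition} through a chain of statements: it first shows the infimum over $\mathbb{M}_{hol}$ is attained by taking a minimizing sequence of holonomic measures and using tightness; it then proves $\bar L = \bar K$ (the sup-inf constant) via a Hahn--Banach separation argument in Lemma \ref{lemma:MatherDualApproach} (or, alternatively, a topological minimax theorem); finally it shows $\bar E \geq \bar K$ by a sub-action argument and $\bar L \geq \bar E$ by applying Birkhoff's ergodic theorem to the Markov extension $\hat\mu$ of a minimizing measure. You close the loop directly and symmetrically. For $\bar E \leq \int L\,d\mu$ you build the same stationary Markov chain as Notation \ref{notation:MarkovExtension}, but instead of Birkhoff you observe that a nonnegative integrable random variable admits a realization bounded by its mean, yielding $a_n\leq n\int L\,d\mu$ and hence $\bar E=\sup_n a_n/n\leq\int L\,d\mu$ for every holonomic $\mu$ with $\int L\,d\mu<\infty$. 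For the reverse inequality and the attainment, rather than minimizing inside $\mathbb{M}_{hol}$ and appealing to the duality with $\bar K$, you build the minimizer explicitly as a narrow limit of the empirical measures $\mu_n = \frac{1}{n}\sum_k\delta_{(\tau_{x_k^n}(\omega_n),\,x_{k+1}^n-x_k^n)}$ attached to near-optimal configurations, with tightness from coercivity, holonomy preserved in the limit because the boundary term in the pseudoinvariance identity is $O(1/n)$, and lower semicontinuity of $\mu\mapsto\int L\,d\mu$. This avoids Hahn--Banach entirely and is self-contained for the statement at hand. What the paper's longer route buys is the sup-inf formula (Proposition \ref{proposition:SupInfFormula}) as a by-product, which the authors use independently later — for example to produce the approximate sub-actions $u_p$ in Part 1 of the proof of Lemma \ref{lemma:CalibratedManeSubCocycle} — so in the paper's architecture the detour is not redundant, but for Proposition \ref{proposition:MatherDefinition} alone your argument is a genuine simplification.
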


\begin{definition}
\label{definition:MinimizingMeasure}
We say that an holonomic measure $\mu$ is minimizing if $\bar E = \int\! L \, d\mu$.  We denote by $\mathbb{M}_{min}(L)$ the set of minimizing measures. We call Mather set of $L$ the set
\[
\text{\rm Mather}(L) := \cup_{\mu \in \mathbb{M}_{min}(L)} \textrm{\rm supp}(\mu) \subseteq \Omega \times \mathbb{R}^d.
\]
The projected Mather set is simply $pr(\text{\rm Mather}(L))$, where $pr:\Omega\times\mathbb{R}^d \to \Omega$ is the first projection.
\end{definition}

\begin{proposition} \label{proposition:MatherSet} \
\begin{enumerate}
\item If $L$ is $C^0$ coercive, then
\[
\exists \, \mu \in \mathbb{M}_{min}(L) \quad \text{with} \quad \text{\rm Mather}(L)=\text{\rm supp}(\mu).
\]
In particular, $\text{\rm Mather}(L)$ is closed.

\item If $L$ is $C^0$ superlinear, then $\text{\rm Mather}(L)$ is compact.
\end{enumerate}
\end{proposition}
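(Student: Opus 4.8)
The plan is to reduce both assertions to compactness/closedness properties of the set $\mathbb{M}_{min}(L)$ of minimizing holonomic measures, together with a tightness argument in the fiber direction. First I would establish a uniform moment bound: if $L$ is coercive (resp. superlinear), then every minimizing measure $\mu$ satisfies a bound of the form $\int \|t\|\, d\mu \le C$ (resp. $\int \phi(\|t\|)\, d\mu \le C$ for a superlinear $\phi$), with $C$ depending only on $L$ and $\bar E$. Indeed, since $\int L\, d\mu = \bar E < +\infty$ and $L$ is bounded below (by coerciveness and continuity on the compact $\Omega$), coerciveness forces $\mu$ to give little mass to the region $\{\|t\|\ge R\}$ for large $R$; more quantitatively, writing $L(\omega,t) \ge \eta(\|t\|)$ with $\eta \to +\infty$, we get $\int \eta(\|t\|)\, d\mu \le \bar E - \inf L =: C$. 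In the superlinear case the same computation with $\eta(\|t\|)/\|t\| \to +\infty$ yields the stronger bound. This is essentially a reformulation of Proposition \ref{proposition:MatherDefinition} applied measure-by-measure.

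For part (1), the key point is that $\mathbb{M}_{min}(L)$ is convex and that an extreme point argument is not even needed: it suffices to average. I would take a countable weak${}^*$-dense sequence $\{\mu_j\}_{j\ge 1}$ in $\mathbb{M}_{min}(L)$ (separability of the space of probability measures on $\Omega\times\mathbb{R}^d$ restricted to the uniformly tight family from the moment bound) and set $\mu := \sum_{j\ge 1} 2^{-j}\mu_j$. Each $\mu_j$ is holonomic and the holonomy constraint is linear and closed under such countable convex combinations (the moment bound guarantees $\mu$ is a genuine probability measure with finite $L$-integral and that the defining identity for holonomy passes to the limit by dominated convergence, using $f\in C^0(\Omega)$ bounded). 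Likewise $\int L\, d\mu = \sum 2^{-j}\int L\, d\mu_j = \bar E$, so $\mu \in \mathbb{M}_{min}(L)$. Then $\operatorname{supp}(\mu) = \overline{\bigcup_j \operatorname{supp}(\mu_j)} = \overline{\operatorname{Mather}(L)}$; but one still must check $\operatorname{supp}(\mu) \supseteq \operatorname{supp}(\nu)$ for \emph{every} $\nu\in\mathbb{M}_{min}(L)$, which follows because any such $\nu$ lies in the weak${}^*$-closure of the convex hull of $\{\mu_j\}$, hence is absolutely continuous with respect to... — more safely: a point $z\in\operatorname{supp}(\nu)$ has $\nu(B)>0$ for every ball $B\ni z$, and by density of $\{\mu_j\}$ some $\mu_j$ also charges $B$, so $z\in\overline{\bigcup_j\operatorname{supp}\mu_j}=\operatorname{supp}(\mu)$. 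Thus $\operatorname{Mather}(L)=\operatorname{supp}(\mu)$, which is closed by definition of support.

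For part (2), with $L$ superlinear, I combine part (1) with the fiberwise tightness from the superlinear moment bound. Since $\operatorname{Mather}(L)=\operatorname{supp}(\mu)$ is a closed subset of $\Omega\times\mathbb{R}^d$, it is enough to show it is bounded, i.e. contained in $\Omega\times \overline{B(0,R)}$ for some $R$. Suppose not: then there are points $(\omega_k,t_k)\in\operatorname{supp}(\mu)$ with $\|t_k\|\to\infty$, so $\mu$ charges every neighborhood of each $(\omega_k,t_k)$, giving $\mu(\{\|t\|>R\})>0$ for all $R$. That alone is consistent with a probability measure, so I must instead argue via \emph{all} minimizing measures: if $\operatorname{Mather}(L)$ were unbounded, then for each $k$ pick $\nu_k\in\mathbb{M}_{min}(L)$ with $\nu_k(\{\|t\|>k\})>0$; this does not immediately contradict the uniform superlinear bound $\int\phi(\|t\|)\,d\nu_k\le C$ either. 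The cleanest route: the bound $\int\phi(\|t\|)\,d\mu\le C$ for the single measure $\mu$ from part (1) shows $\mu(\{\|t\|>R\}) \le C/\phi(R)\to 0$; but the support can still be unbounded. So instead I show directly that no point $(\omega_*,t_*)$ with $\|t_*\|$ huge can be in $\operatorname{supp}(\nu)$ for a minimizing $\nu$: if it were, a small perturbation of $\nu$ replacing mass near $(\omega_*,t_*)$ by mass near $(\omega_*,0)$ (which is holonomic, since $\delta_{(\tau_{t_*}\omega_*,0)}$-type corrections restore holonomy — here I use the structure $\mu_{n}$ to build an admissible competitor) strictly lowers $\int L\,d\nu$ once $L(\omega_*,t_*) > L(\omega_*,0) + (\text{holonomy correction cost})$, contradicting minimality; superlinearity makes $L(\omega,t)$ exceed any linear bound uniformly in $\omega$, dominating the bounded correction cost. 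This localized exchange argument is the main obstacle, as it requires care to keep the competitor holonomic; I expect to handle it by working with the approximating empirical measures $\mu_{n,\underline\omega}$ and truncating long jumps, then passing to the limit.
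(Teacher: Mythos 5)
Your proof of item 1 is essentially the same countable convex-combination trick as the paper's, just reached from a slightly different angle: you use a weak${}^*$-dense sequence in $\mathbb{M}_{min}(L)$, while the paper uses a countable basis $\{V_i\}$ of the topology of $\Omega\times\mathbb{R}^d$ and chooses for each basic open set that meets some minimizing support a minimizing measure charging it. The paper's route sidesteps the metrizability/separability discussion and avoids invoking portmanteau, but the content is the same, and your argument is correct once you note that tightness of $\mathbb{M}_{min}(L)$ (from the proof of Proposition~\ref{proposition:MatherDefinition}) makes it a separable metrizable subset of $\mathcal{P}(\Omega\times\mathbb{R}^d)$.

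For item 2, there is a genuine gap, and you have in fact flagged it yourself. The superlinear moment bound $\int\phi(\|t\|)\,d\mu\leq C$ only controls how $\mu$-mass decays in $\|t\|$; it cannot by itself bound the support, and you correctly discard it. The replacement you propose, exchanging mass near a far point $(\omega_*,t_*)$ for mass near $(\omega_*,0)$, breaks holonomy: if you remove mass $\epsilon$ from $(\omega_*,t_*)$ and add it at $(\omega_*,0)$, the defect $\int(f-f\circ\tau)\,d\tilde\mu$ becomes $\epsilon[f(\tau_{t_*}\omega_*)-f(\omega_*)]$, which is nonzero unless $\tau_{t_*}\omega_*=\omega_*$. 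Trying to repair this by splitting the long jump into shorter ones changes the number of sites and hence cannot be done at fixed total mass while staying holonomic; working with the empirical measures $\mu_{n,\underline\omega}$ and truncating long jumps still requires recomposing the truncated path into an admissible one and controlling the energy, which is a nontrivial construction that the proposal does not carry out. The paper takes a completely different route: it first proves (Lemmas~\ref{lemma:CalibratedSubCocycle} and~\ref{lemma:CalibratedManeSubCocycle}) that the Ma\~n\'e subadditive cocycle $\Phi$ satisfies the sublinear bound $|\Phi(\omega,t)|\leq R(1+\|t\|)$ and that $\Phi=L-\bar L$ holds $\mu$-a.e.\ for every minimizing $\mu$. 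Combined with superlinearity $L(\omega,t)\geq 2R\|t\|-B$, this forces $\|t\|\leq(R+B+|\bar L|)/R$ $\mu$-a.e., so the support of every minimizing measure (and hence the Mather set) lies in a fixed compact set. The lemma-based argument replaces the delicate local perturbation by a global a priori estimate and is both shorter and complete; your sketch would need substantially more work (an explicit holonomy-preserving surgery with quantitative energy control) before it could be considered a proof.
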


The set of holonomic measures may be seen as a dual object to the set of coboundaries $\{u - u \circ \tau_t : u \in C^0(\Omega), \ t \in \mathbb{R}^d \}$. Proposition \ref{proposition:MatherDefinition} admits thus a dual version that will be first proved.

\begin{proposition}[\bf The sup-inf formula]
\label{proposition:SupInfFormula}
If $L$ is $C^0$ coercive, then
\begin{align*}
\bar E &= \sup_{u\in C^0(\Omega)} \ \inf_{\omega \in \Omega, \ t\in \mathbb{R}^d} \ \big[ L(\omega,t) + u(\omega) - u \circ \tau_t(\omega) \big].
\end{align*}
\end{proposition}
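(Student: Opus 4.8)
The plan is to prove the sup-inf formula (Proposition~\ref{proposition:SupInfFormula}) by a two-sided inequality, obtaining the easy direction directly and the hard direction from Proposition~\ref{proposition:MatherDefinition} via a Hahn--Banach/minimax separation argument.

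\medskip

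\textbf{The inequality $\bar E \geq \sup_u \inf_{\omega,t}[\cdots]$.} First I would fix $u \in C^0(\Omega)$ and set $c(u) := \inf_{\omega\in\Omega,\,t\in\mathbb{R}^d}[L(\omega,t)+u(\omega)-u\circ\tau_t(\omega)]$, so that $L(\omega,t) \geq c(u) - u(\omega) + u\circ\tau_t(\omega)$ for all $(\omega,t)$. Given any environment $\underline\omega$ and any finite configuration $x_0,\ldots,x_n$, summing this pointwise bound along the configuration and using the cocycle identity $\tau_{x_{k+1}}(\underline\omega) = \tau_{x_{k+1}-x_k}(\tau_{x_k}(\underline\omega))$ yields a telescoping sum:
\[
E_{\underline\omega}(x_0,\ldots,x_n) = \sum_{k=0}^{n-1} L(\tau_{x_k}(\underline\omega),x_{k+1}-x_k) \geq n\,c(u) + u\circ\tau_{x_n}(\underline\omega) - u\circ\tau_{x_0}(\underline\omega).
\]
Since $u$ is bounded (continuous on a compact space), dividing by $n$ and letting $n\to+\infty$ gives $\bar E \geq c(u)$; taking the supremum over $u$ gives one inequality. (This step is routine and is essentially the ``weak KAM subsolution'' estimate.)

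\medskip

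\textbf{The inequality $\bar E \leq \sup_u \inf_{\omega,t}[\cdots]$.} This is the main obstacle, and I would handle it by a convex-duality argument in the spirit of Mañé. Consider, on the space $C^0(\Omega\times\mathbb{R}^d)$ restricted suitably (or on $C^0$ of a compactified fiber, using coerciveness to reduce to $\|t\|\leq R$ for $R$ large), the convex functional and the linear subspace of ``coboundaries'' $\mathcal{B} := \{\,(\omega,t)\mapsto u(\omega)-u\circ\tau_t(\omega) : u\in C^0(\Omega)\,\}$. By Proposition~\ref{proposition:MatherDefinition}, $\bar E = \inf\{\int L\,d\mu : \mu\in\mathbb{M}_{hol}\}$, and $\mathbb{M}_{hol}$ is exactly the set of probability measures annihilating $\mathcal{B}$. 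One then observes that for any $c > \sup_u\inf_{\omega,t}[L+u-u\circ\tau_t]$, the function $L - c$ cannot be dominated from below by any element of $\mathcal{B}$; equivalently, the convex set $\{L - b - c : b\in\mathcal{B}\}$ does not meet the positive cone, which by a Hahn--Banach separation in $C^0$ (the coerciveness of $L$ is what makes the relevant sets have the right topological properties, e.g.\ weak* compactness after restricting the fiber to a large ball) produces a probability measure $\mu$ that is holonomic (kills $\mathcal{B}$) and satisfies $\int L\,d\mu \leq c$. Letting $c$ decrease to the supremum yields $\bar E \leq \sup_u\inf_{\omega,t}[\cdots]$.

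\medskip

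The delicate points I expect are purely functional-analytic: ensuring the separation argument is carried out in a space where the relevant convex sets are closed and the separating functional is represented by a genuine (Radon) probability measure rather than a finitely-additive one — here coerciveness of $L$ is essential, since it lets one a priori restrict to configurations with bounded jumps $\|x_{k+1}-x_k\|\leq R$ and hence work over the compact fiber $\Omega\times\overline{B(0,R)}$, on which $C^0$ duality is clean. An alternative, perhaps cleaner route avoiding separation altogether: deduce the formula directly from Proposition~\ref{proposition:MatherDefinition} by noting $\sup_u\inf_{\omega,t}[L+u-u\circ\tau_t] = \sup_u\inf_{\mu\in\mathcal{P}(\Omega\times\mathbb{R}^d)}\int[L+u-u\circ\tau_t]\,d\mu$ and swapping $\sup_u$ and $\inf_\mu$ by Sion's minimax theorem (justified because $\int[u-u\circ\tau_t]\,d\mu$ is linear in $u$ and affine-continuous in $\mu$, and the coercive $L$ makes the infimum effectively over a weak*-compact convex set of measures), after which the inner $\sup_u\int[u-u\circ\tau_t]\,d\mu$ equals $0$ if $\mu\in\mathbb{M}_{hol}$ and $+\infty$ otherwise, reducing the right-hand side exactly to $\inf\{\int L\,d\mu:\mu\in\mathbb{M}_{hol}\} = \bar E$. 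I would present this minimax version as the primary argument, with the telescoping estimate above supplying (and double-checking) the elementary inequality.
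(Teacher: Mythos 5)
Your two-sided plan and your easy direction (telescoping the subsolution estimate along a configuration and using that $u$ is bounded on the compact $\Omega$) match the paper exactly. The hard direction via minimax or Hahn--Banach is also essentially the paper's Lemma~\ref{lemma:MatherDualApproach}, where $\bar K := \sup_u\inf_{\omega,t}\big[L+u-u\circ\tau\big]$ is shown equal to $\bar L := \inf_{\mu\in\mathbb M_{hol}}\int L\,d\mu$ by both of those methods. One technical caveat on your minimax route: $\mathcal P^1(\Omega\times\mathbb R^d)$ under the Kantorovich--Rubinstein topology is not compact, so Sion's theorem does not apply directly; the paper invokes the Topological Minimax Theorem of Tuy, whose weaker compactness hypothesis (item~\ref{item:TopologicalMinimaxTheorem_4}) is satisfied because a single sublevel set $\{\mu : \int L\,d\mu\leq\alpha^*\}$ is already compact once $L$ is superlinear in the adapted metric $\hat{d} := \min(d,1)$. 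Your remark that ``coercive $L$ makes the infimum effectively over a weak*-compact convex set'' gestures at this, but you need Tuy's version, not Sion's, for the swap to be rigorous.

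The genuine gap is your reliance on Proposition~\ref{proposition:MatherDefinition}. In the paper, Propositions \ref{proposition:MatherDefinition}, \ref{proposition:SupInfFormula} and \ref{proposition:LocalGroundEnergy} are proved together, closing the cycle $\bar E \geq \bar K = \bar L \geq \bar E$, so you may not cite the ergodic formula as an independent prerequisite. Concretely, your argument delivers $\bar E \geq \bar K$ and $\bar K \geq \bar L$ but never supplies $\bar L \geq \bar E$, i.e.\ that \emph{every} holonomic measure has average $L$-cost at least the ground energy. This is not automatic: holonomicity is strictly weaker than $\tau$-invariance, and the empirical-measure limits that would naturally satisfy the bound need not exhaust $\mathbb M_{hol}$. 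The paper proves it via the Markov extension (Notation~\ref{notation:MarkovExtension}): to each holonomic $\mu$ one associates a $\hat\tau$-invariant probability $\hat\mu$ on $\hat\Omega = \Omega\times(\mathbb R^d)^{\mathbb N}$, observes that $\sum_{k=0}^{n-1}\hat L\circ\hat\tau^k(\omega,\underline t) = E_\omega(x_0,\ldots,x_n)$ with $x_0=0$ and $x_k = t_0+\cdots+t_{k-1}$, and applies Birkhoff's ergodic theorem to conclude $\int L\,d\mu = \int \lim_{n}\frac1n\sum_{k<n}\hat L\circ\hat\tau^k\,d\hat\mu \geq \bar E$. Without this step, or an alternative proof of $\bar L\geq\bar E$, the appeal to Proposition~\ref{proposition:MatherDefinition} is circular and your proof of $\bar E \leq \bar K$ is incomplete.
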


We do not know whether the above supremum is achieved for general almost periodic interaction models. There is finally a third way to compute the ground energy, which says that the exact choice of the environment $\omega$ is irrelevant.

\begin{proposition}
\label{proposition:LocalGroundEnergy}
If $L$ is $C^0$ coercive, then
\[
\forall \, \omega \in \Omega, \quad \bar E = \lim_{n\to+\infty} \inf_{x_0,\ldots,x_n \in \mathbb{R}^d} \frac{1}{n} E_\omega(x_0,\ldots,x_n).
\]
\end{proposition}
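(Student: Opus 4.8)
The plan is to show that, for each fixed $\omega$, the quantity $\bar E_\omega := \lim_{n\to+\infty} \inf_{x_0,\ldots,x_n} \frac1n E_\omega(x_0,\ldots,x_n)$ is well-defined (by superadditivity, exactly as for $\bar E$ in Definition \ref{definition:GlobalGroundEnergy}) and coincides with $\bar E$. One inequality is immediate: since $\bar E$ is an infimum over $\omega$ as well, we have $\bar E \leq \bar E_\omega$ for every $\omega$. The content of the proposition is the reverse inequality $\bar E_\omega \leq \bar E$, i.e. that no environment can do strictly better per site than the worst-case environment, and this is where minimality of the $\mathbb R^d$-action $\{\tau_t\}$ is essential.

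First I would fix $\epsilon>0$ and choose an environment $\omega'$ and a finite configuration $(y_0,\ldots,y_N)$ with $\frac1N E_{\omega'}(y_0,\ldots,y_N) \leq \bar E + \epsilon$; without loss of generality translate so that $y_0 = 0$, so that $E_{\omega'}(0,y_1,\ldots,y_N) \leq N(\bar E+\epsilon)$, where by the Lagrangian form \eqref{Lagrangianform} this energy depends on $\omega'$ only through the finite piece of orbit $\tau_{y_0}(\omega'),\ldots,\tau_{y_{N-1}}(\omega')$. Coerciveness of $L$ bounds the increments $\|y_{k+1}-y_k\|$, hence the whole configuration lies in a ball of radius $R_N$, and continuity of $L$ gives a modulus: there is $\delta>0$ such that if $\mathrm{dist}(\tilde\omega,\omega')<\delta$ then $E_{\tilde\omega}(0,y_1,\ldots,y_N) \leq N(\bar E + 2\epsilon)$. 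By minimality of $\{\tau_t\}$, the orbit of the given $\omega$ is dense, so there is a sequence of translation vectors $t_j\in\mathbb R^d$ with $\tau_{t_j}(\omega)\to\omega'$; fix one such $t = t_j$ with $\tau_t(\omega)$ within $\delta$ of $\omega'$. Using the topological stationarity \eqref{equation:Equivariance}, the configuration $(t, y_1+t,\ldots,y_N+t)$ in the environment $\omega$ has energy $E_\omega(t,y_1+t,\ldots,y_N+t) = E_{\tau_t(\omega)}(0,y_1,\ldots,y_N) \leq N(\bar E + 2\epsilon)$. This produces, \emph{starting from the particular point $t$}, a block of $N$ steps with average energy at most $\bar E + 2\epsilon$ in the environment $\omega$.

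Next I would chain such blocks together to build a long configuration of $n \approx mN$ sites in the environment $\omega$ with average energy close to $\bar E$. The subtlety is that after placing the first block ending at the point $y_N + t$, to repeat the trick I need a translation vector bringing $\tau_{y_N+t}(\omega)$ close to $\omega'$ again; minimality only guarantees such vectors exist but does not control where they are, so the junction steps between consecutive good blocks must be handled. I would absorb these junctions by noting that each junction is a single step between two points whose mutual distance can itself be bounded: choose the return vector $s$ with $\tau_{y_N + t + s}(\omega)$ close to $\omega'$ and also, say, $\|s\|$ bounded by a fixed constant (this uses a quantitative form of minimality — uniform recurrence of the orbit into the $\delta$-ball, which holds because $\Omega$ is compact and the action minimal, so the return-time set is relatively dense / syndetic); then the junction step contributes at most $\sup_{\|u\|\leq C} \sup_\omega L(\omega,u) =: M_\delta < +\infty$ by translation boundedness. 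After $m$ blocks with $m-1$ junctions we get a configuration of $mN + (m-1)$ sites whose total energy is at most $m N(\bar E+2\epsilon) + (m-1) M_\delta$, hence average energy at most $\bar E + 2\epsilon + M_\delta/N + o(1)$ as $m\to\infty$. This gives $\bar E_\omega \leq \bar E + 2\epsilon + M_\delta/N$; letting $N\to\infty$ along a sequence realizing $\bar E$ (so that $M_\delta/N\to 0$ — here one must check $M_\delta$ does not blow up faster than $N$, which follows because $\delta$ and hence the junction length bound $C$ can be kept uniform) and then $\epsilon\to 0$ yields $\bar E_\omega \leq \bar E$.

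The main obstacle is precisely the junction control in the previous paragraph: making sure the return translations can be chosen of \emph{uniformly bounded length}, independent of how far along the chain we are, so that the cost of the $m-1$ junctions stays linear in $m$ with a constant that is negligible compared to $mN\bar E$. I would handle this via the standard fact that for a minimal action of $\mathbb R^d$ on a compact space, for every nonempty open set $U$ the return-time set $\{t : \tau_t(\omega)\in U\}$ is relatively dense (syndetic) uniformly in $\omega$ — equivalently, finitely many translates $\tau_{t_1}^{-1}(U),\ldots,\tau_{t_p}^{-1}(U)$ cover $\Omega$ — which gives the needed uniform bound $C$ on junction lengths. Alternatively, and perhaps more cleanly, one can invoke Proposition \ref{proposition:MatherDefinition} or \ref{proposition:SupInfFormula}: for any $u\in C^0(\Omega)$ and any configuration $(x_0,\ldots,x_n)$ in environment $\omega$, telescoping $u\circ\tau_{x_k}(\omega)$ gives $\frac1n E_\omega(x_0,\ldots,x_n) \geq \inf_{\omega',t}[L(\omega',t)+u(\omega')-u\circ\tau_t(\omega')] - \frac{1}{n}\big(u\circ\tau_{x_n}(\omega) - u\circ\tau_{x_0}(\omega)\big)$; taking $n\to\infty$ (the boundary term vanishes since $u$ is bounded) and then the supremum over $u$ yields $\bar E_\omega \geq \bar E$ directly from the sup-inf formula, while the density/approximation argument above yields $\bar E_\omega \leq \bar E$. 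I would present the lower bound via this telescoping-corrector argument and the upper bound via the minimality-and-chaining argument, which together close the proof.
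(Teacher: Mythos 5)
Your plan contains a genuine gap, and the gap stems from an over-complication: the paper's proof requires no chaining and no junction control at all.

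The weak point is the assertion that ``$\delta$ and hence the junction length bound $C$ can be kept uniform'' as $N\to\infty$. The modulus of continuity that lets you pass from $\mathrm{dist}(\tilde\omega,\omega')<\delta$ to $\big|E_{\tilde\omega}(0,y_1,\ldots,y_N)-E_{\omega'}(0,y_1,\ldots,y_N)\big|\le N\epsilon$ involves the maps $\tau_{y_k}$ for $k$ ranging up to $N$, hence over a time range of order $NR$. Since only joint continuity of $(t,\omega)\mapsto\tau_t(\omega)$ is assumed (no equicontinuity of the flow), the required $\delta$ in general shrinks as $N$ grows, which makes $C$ and then $M_\delta$ grow with $N$; you give no mechanism to beat $M_\delta/N\to 0$. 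So the final limit passage $N\to\infty$, $\epsilon\to 0$ is not justified as written. The telescoping alternative you offer for the other inequality only reproves the inequality you already called immediate ($\bar E\le\bar E_\omega$) and does not help with the problematic direction.

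The paper avoids all of this by a much shorter observation, applied for each fixed $n$ before any limit is taken. Namely, since the inner infimum is over all finite configurations, one is free to translate the whole configuration; by stationarity \eqref{equation:Equivariance} this amounts to changing the environment along the orbit of $\omega$, and by minimality plus continuity of $\omega'\mapsto E_{\omega'}(x_0,\ldots,x_n)$ (which is a finite sum of continuous functions, for a fixed configuration), the orbit infimum equals the infimum over all of $\Omega$:
\begin{align*}
\inf_{x_0,\ldots,x_n} E_\omega(x_0,\ldots,x_n)
&= \inf_{x_0,\ldots,x_n}\inf_{t\in\mathbb{R}^d} E_\omega(x_0+t,\ldots,x_n+t) \\
&= \inf_{x_0,\ldots,x_n}\inf_{t\in\mathbb{R}^d} E_{\tau_t(\omega)}(x_0,\ldots,x_n)
= \inf_{x_0,\ldots,x_n}\inf_{\omega'\in\Omega} E_{\omega'}(x_0,\ldots,x_n).
\end{align*}
This gives equality of the two sides \emph{for every} $n$, not just in the limit, with no chaining, no syndetic-return-time lemma, and no degrading modulus of continuity. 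Dividing by $n$ and letting $n\to\infty$ finishes the proof. Your instinct to use minimality and continuity is correct; the point you missed is that the configuration infimum already absorbs translations, so there is no block decomposition to perform and hence no junction to control.
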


Before proving propositions \ref{proposition:MatherDefinition}, \ref{proposition:SupInfFormula} and \ref{proposition:LocalGroundEnergy}, we note temporarily
\begin{gather*}
\bar E_\omega = \lim_{n\to+\infty} \inf_{x_0,\ldots,x_n \in \mathbb{R}^d} \frac{1}{n} E_\omega(x_0,\ldots,x_n), \quad
\bar L := \inf \Big\{ \int\! L \, d\mu : \mu \in \mathbb{M}_{hol} \Big\}, \\
\text{and} \quad \bar K :=  \sup_{u\in C^0(\Omega)} \ \inf_{\omega \in \Omega, \ t\in \mathbb{R}^d} \ \big[ L(\omega,t) + u(\omega) - u \circ \tau_t(\omega) \big].
\end{gather*}
We first show that the infimum is attained in proposition  \ref{proposition:MatherDefinition}.

\begin{proof}[Proof of proposition \ref{proposition:MatherDefinition}]
 We shall prove later that $\bar L = \bar E$.
We prove now that the infimum is attained in $\bar L := \inf \{\int\!L\,d\mu : \mu \in \mathbb{M}_{hol} \}$.
Let \begin{gather*}
C:=\sup_{\omega\in\Omega} L(\omega,0) \geq \bar L \quad
\text{and} \quad
\mathbb{M}_{hol,C} := \Big\{\mu\in\mathbb{M}_{hol} : \int\!L \,d\mu  \leq C \Big\}.
\end{gather*}
We equip the set of probability measures on $\Omega\times\mathbb{R}^d$ with the weak topology
(convergence of sequence of measures by integration against compactly supported continuous test functions).
By coerciveness, for every $\epsilon>0$ and $M>\inf L$ such that $\epsilon > (C-\inf L)/(M-\inf L)$, there exists
$R(\epsilon)>0$ with $\inf_{\omega\in\Omega, \|t\|\geq R(\epsilon)}L(\omega,t) \geq M$. By integrating $L-\inf L$, we get
\[
\forall \mu\in\mathbb{M}_{hol,C}, \quad\mu\big(\Omega\times\{t : \|t\|\geq R(\epsilon)\}\big) \leq \int\frac{L-\inf L}{M-\inf L} \,d\mu \leq \frac{C-\inf L}{M-\inf L} < \epsilon.
\]
We have just proved that the set $\mathbb{M}_{hol,C}$ is tight. Let $(\mu_n)_{n \geq 0} \subset \mathbb M_{hol,C}$ be a sequence of holonomic measures such that $\int\! L \,d\mu_n\to\bar L$. By tightness, we may assume that $\mu_n \to \mu_\infty$ with respect to the strong topology (convergence of sequence of measures by integration against bounded continuous test functions). In particular, $\mu_\infty$ is holonomic. Moreover, for every $\phi\in C^0(\Omega, [0,1])$, with compact support,
\[
0\leq \int\! (L-\bar L)\phi \,d\mu_\infty = \lim_{n\to+\infty} \int\! (L-\bar L)\phi \, d\mu_n \leq \liminf_{n\to+\infty} \int\!(L-\bar L) \,d\mu_n = 0.
\]
Therefore, $\mu_\infty$ is minimizing.
\end{proof}

We next show that there is no need to take the closure in the definition of the Mather set. We will show later that it is compact.

\begin{proof}[Proof of proposition \ref{proposition:MatherSet} -- Item 1.] We show that $\text{\rm Mather}(L)=\text{\rm supp}(\mu)$ for some minimizing measure $ \mu $. Let $\{V_i\}_{i\in\mathbb{N}}$ be a countable basis of the topology of $\Omega\times\mathbb{R}^d$ and let
\[
I := \{ i \in \mathbb{N} : V_i \cap \text{\rm supp}(\nu) \not= \emptyset \text{ for some }\nu \in \mathbb{M}_{min}(L) \}.
\]
We reindex $I=\{i_1,i_2,\ldots\}$ and choose for every $k\geq1$ a minimizing measure $\mu_k$ so that $V_{i_k} \cap \text{\rm supp}(\mu_k) \not= \emptyset$ or equivalently $\mu_k(V_{i_k})>0$. Let $\mu:=\sum_{k\geq1}\frac{1}{2^k}\mu_k$. Then $\mu$ is minimizing. Suppose some $V_i$ is disjoint from the support of $\mu$. Then $\mu(V_i)=0$ and, for every $k\geq1$, $\mu_k(V_i)=0$. Suppose by contradiction that $ V_i \cap \text{\rm supp}(\nu) \not= \emptyset$ for some $\nu\in\mathbb{M}_{min}(L)$, then $i=i_k$ for some $k\geq1$ and, by the choice of $\mu_k$, $\mu_k(V_i)>0$, which is not possible. Therefore, $V_i$ is disjoint from the Mather set and we have just proved $\text{\rm Mather}(L) \subseteq \text{\rm supp}(\mu)$ or $\text{\rm Mather}(L) = \text{\rm supp}(\mu)$.
\end{proof}

Item 2 of proposition \ref{proposition:MatherSet} will be proved later. We shall need the fact $\Phi=L-\bar L$ on the Mather set, that will be proved in lemma \ref{lemma:CalibratedManeSubCocycle}.

The two formulas given in propositions \ref{proposition:MatherDefinition} and \ref{proposition:SupInfFormula} are two different ways to compute $\bar E$. It is not an easy task to show that the two values are equal. It is the purpose of lemma \ref{lemma:MatherDualApproach} to give a direct proof of this fact. We also give a second proof using the minimax formula.

\begin{lemma}
\label{lemma:MatherDualApproach}
If $L$ is $C^0$ coercive, then $\bar L = \bar K$ and there exists $\mu \in \mathbb{M}_{hol}$ such that $\bar L = \int\! L \, d\mu$.
\end{lemma}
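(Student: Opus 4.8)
The plan is to prove the two inequalities $\bar K \leq \bar L$ and $\bar L \leq \bar K$ separately; the existence of a minimizing holonomic measure is already established in the proof of Proposition \ref{proposition:MatherDefinition}, so only the equality of the two variational values remains. The inequality $\bar K \leq \bar L$ is the easy ``weak duality'' direction: fix any $u \in C^0(\Omega)$ and any holonomic $\mu \in \mathbb{M}_{hol}$. Since $\mu$ is holonomic, $\int [u(\omega) - u\circ\tau_t(\omega)]\,d\mu = 0$, hence
\[
\inf_{\omega,t}\big[L(\omega,t) + u(\omega) - u\circ\tau_t(\omega)\big] \leq \int \big[L + u - u\circ\tau_t\big]\,d\mu = \int L\,d\mu.
\]
Taking the supremum over $u$ on the left and the infimum over $\mu$ on the right gives $\bar K \leq \bar L$.

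The substantive direction is $\bar L \leq \bar K$, and here I would argue by contradiction combined with a Hahn--Banach/separation argument in the spirit of Mañé. Suppose $\bar K < \bar L$, and pick $c$ strictly between them. The idea is that, because $c > \bar K$, for \emph{every} continuous $u$ there is a point $(\omega,t)$ with $L(\omega,t) + u(\omega) - u\circ\tau_t(\omega) < c$; this says that the constant function $c - L$ lies in the closure of the cone of ``coboundaries plus nonnegative functions'', which one then separates from the convex set of holonomic measures. Concretely: consider in $C^0(\Omega\times\mathbb{R}^d)$ (or, because of noncompactness of $\mathbb{R}^d$, in a space adapted to the coercivity bound --- one restricts to the tight set $\mathbb{M}_{hol,C}$ and works with test functions of at most linear growth) the closed convex cone $\mathcal{C}$ generated by the functions $\{u - u\circ\tau_t : u \in C^0(\Omega), t \in \mathbb{R}^d\}$ together with the nonnegative continuous functions. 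The hypothesis $c > \bar K$ forces $L - c \in \mathcal{C}$, i.e. there is a sequence $u_n \in C^0(\Omega)$, $t_n \in \mathbb{R}^d$ and nonnegative $g_n$ with $L - c = (u_n - u_n\circ\tau_{t_n}) + g_n$ in an appropriate limit sense; integrating against any holonomic $\mu$ kills the coboundary term and yields $\int L\,d\mu - c = \int g_n\,d\mu \geq 0$, hence $\bar L \geq c$, contradicting $c < \bar L$. Equivalently and more cleanly, one invokes the Hahn--Banach separation theorem: if $\bar L > c > \bar K$ then the constant $c$ cannot be separated from $\{\int L\,d\mu\}$ by a coboundary, which is precisely the negation of $c > \bar K$.

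The main obstacle I anticipate is the \textbf{noncompactness of the fibre $\mathbb{R}^d$}, which means $C^0(\Omega\times\mathbb{R}^d)$ is not the right dual space and the naive separation argument does not immediately apply. The remedy --- and the technical heart of the proof --- is to use coercivity to confine everything to a compact set: as in the proof of Proposition \ref{proposition:MatherDefinition}, any near-minimizing holonomic measure gives negligible mass to $\{\|t\| \geq R\}$ for $R$ large, so one may either truncate $L$ to $L_R := \min(L, M)$ (losing nothing in the infimum defining $\bar K$ for $M$ large, since the infimum over $(\omega,t)$ in $\bar K$ is attained at bounded $\|t\|$ once $u$ is fixed and bounded, by coercivity), run the separation argument on the compact space $\Omega \times \{\|t\| \leq R\}$, and then let $R \to \infty$; or one works directly with the metrizable weak-$^*$ compact set $\mathbb{M}_{hol,C}$ and applies a minimax theorem (Sion's theorem) to the bilinear pairing $(\mu, u) \mapsto \int[L + u - u\circ\tau_t]\,d\mu$, interchanging $\sup_u$ and $\inf_\mu$. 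The bracketed remark in the statement (``We also give a second proof using the minimax formula'') strongly suggests the authors do exactly this: the first proof is the hands-on cone/separation argument, the second is the slick Sion minimax argument; I would present the minimax route as the cleaner option, being careful that $u \mapsto \int[L+u-u\circ\tau_t]\,d\mu$ is affine (hence concave) in $u$ and $\mu \mapsto$ same is affine (hence convex) in $\mu$, and that $\mathbb{M}_{hol,C}$ is convex and weak-$^*$ compact.
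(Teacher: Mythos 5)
You correctly identify the two-step structure (easy weak duality $\bar K\leq\bar L$, substantive direction $\bar L\leq\bar K$ via Hahn--Banach, plus a minimax alternative), and you correctly anticipate the noncompactness of the fibre $\mathbb{R}^d$ as the technical obstruction; this matches the paper's plan. However, both of your sketches of the hard direction contain genuine errors.

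In the cone/separation sketch, your central claim -- that $c>\bar K$ forces $L-c\in\mathcal C$, where $\mathcal C$ is the cone of coboundaries plus nonnegative functions -- is the reverse of the truth. If $L-c=(v-v\circ\tau)+g$ with $g\ge0$, then setting $u=-v$ gives $L+u-u\circ\tau\ge c$ everywhere, whence $\bar K\ge c$; so $c>\bar K$ in fact forces $L-c\notin\overline{\mathcal C}$. Moreover, the conclusion you draw (``hence $\bar L\ge c$, contradicting $c<\bar L$'') is not a contradiction: $\bar L\ge c$ is perfectly consistent with $c<\bar L$. The correct form of the argument is the converse: from $L-c\notin\overline{\mathcal C}$, a separating functional yields a nonnegative normalized measure $\mu$ annihilating coboundaries (hence holonomic) with $\int L\,d\mu<c$; letting $c\downarrow\bar K$ gives $\bar L\le\bar K$. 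The paper's first proof implements exactly this, but is careful to work in $X=C^0_b(\Omega\times\mathbb R^d)$, to recover $\mu$ via Riesz--Markov on $C^0_0$, and to pass from $\lambda$ to $\mu$ by a compactly supported truncation and a monotone limit -- steps your sketch waves at but would need to flesh out.

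In the minimax sketch there is a more structural gap: you propose applying Sion's theorem with $\mu$ ranging over (a tight subset of) holonomic measures. But for fixed $u$, the infimum of $\int[L+u-u\circ\tau]\,d\mu$ over holonomic $\mu$ is \emph{not} $\inf_{\omega,t}[L+u-u\circ\tau](\omega,t)$, because the Dirac masses $\delta_{(\omega,t)}$ with $t\neq0$ are not holonomic. So after interchanging $\sup$ and $\inf$ you would not recover $\bar K$. The paper avoids this by taking the infimum over \emph{all} probability measures with finite first moment (so that Dirac masses are available and the inner infimum equals $\inf_{\omega,t}$), applying a topological minimax theorem, and only then observing that the outer $\inf$ must concentrate on holonomic $\mu$ because $\sup_u\int(u-u\circ\tau)\,d\mu=+\infty$ for non-holonomic $\mu$. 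That last observation is the missing ingredient in your sketch. One further subtlety: the lemma only assumes $L$ coercive, not superlinear; the paper's second proof uses the Kantorovich--Rubinstein framework after first replacing the metric by $\hat d=\min(d,1)$, so that coercivity becomes superlinearity in the relevant sense.
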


\begin{proof}[First proof of lemma \ref{lemma:MatherDualApproach}]
{\it Part 1.} We show that $\bar L \geq  \bar K$. Indeed, for any holonomic measure $\mu$ and any function $u \in C^0(\Omega)$,
\begin{align*}
\int\! L \,d\mu &= \int\! [L(\omega,t) + u(\omega) - u\circ \tau_t(\omega)] \,\mu(d\omega,dt) \\
&\geq \inf_{\omega \in \Omega, \ t\in \mathbb{R}^d} \ \big[ L(\omega,t) + u(\omega) - u\circ \tau_t(\omega) \big].
\end{align*}
We conclude by taking the supremum on $u$ and the infimum on $\mu$.

{\it Part 2.} We show that $\bar K \geq  \bar L$. Let $X := C_b^0(\Omega\times\mathbb{R}^d)$ be the vector space of bounded continuous functions equipped with the uniform norm. A coboundary is a function $f$ of the form $f=u \circ \tau - u$ or $f(\omega,t)= u \circ \tau_t(\omega) - u(\omega)$ for some $u\in C^0(\Omega)$. Let
\begin{gather*}
A := \{ (f,s) \in X \times \mathbb{R} : \text{ $f$ is a coboundary and } s \geq \bar K \} \quad\text{and} \\
B := \{(f,s)\in X \times \mathbb{R} : \inf_{\omega \in \Omega, \ t\in \mathbb{R}^d} (L-f)(\omega,t) > s \}.
\end{gather*}
Then $A$ and $B$ are nonempty convex subsets of $X\times\mathbb{R}$. They are disjoint by the definition of $\bar K$ and $B$ is open because $L$ is coercive. By Hahn-Banach theorem, there exists a nonzero continuous linear form $\Lambda$ on $X\times\mathbb{R}$ which separates $A$ and $B$. The linear form $\Lambda$ is given by $\lambda \otimes \alpha$, where $\lambda$ is a continuous linear form on $X$ and $\alpha \in \mathbb{R}$. The linear form $\lambda$ is, in particular, continuous on $C^0_0(\Omega \times \mathbb{R}^d)$ and, by Riesz-Markov theorem,
\[
\forall \, f \in C_0^0(\Omega\times\mathbb{R}^d), \quad \lambda(f) = \int\! f \,d\mu,
\]
for some signed measure  $\mu$. By separation, we have
\[
\lambda(f) + \alpha s \leq   \lambda(u - u \circ \tau) + \alpha s',
\]
for $u\in C^0(\Omega)$, $f\in X$ and $s,s'\in \mathbb{R}$ such that $\inf_{\Omega \times \mathbb{R}^d} (L-f)>s$ and $s'\geq \bar K$. By multiplying $u$ by an arbitrary constant, one obtains
\[
\forall \, u \in C^0(\Omega), \quad \lambda(u-u\circ\tau)=0.
\]
The case $\alpha=0$ is not admissible, since otherwise  $\lambda(f)\leq0$ for every $f\in X$ and $\lambda$ would be the null form, which is not possible. The case $\alpha<0$ is not admissible either, since otherwise  one would obtain a contradiction by taking  $f=0$ and $s\to-\infty$. By dividing by $\alpha>0$ and changing $\lambda/\alpha$ to $\lambda$ (as well as $\mu/\alpha$ to $\mu$), one obtains
\[
\forall \, f\in X,   \quad \lambda(f) + \inf_{\Omega\times\mathbb{R}^d} (L-f) \leq \bar K.
\]
By taking $f=c\mathds{1}$, one obtains $c(\lambda(\mathds{1})-1) \leq \bar K - \inf_{\Omega\times\mathbb{R}^d} L$ for every $c\in\mathbb{R}$, and thus $\lambda(\mathds{1})=1$. By taking $-f$ instead of $f$, one obtains $\lambda(f)\geq \inf_{\Omega\times\mathbb{R}^d} L - \bar K $ for every $f\geq0$,
which (again arguing by contradiction) yields $\lambda(f)\geq0$. In particular, $\mu$ is a probability measure. We claim that
\[
\forall \, u \in C^0(\Omega), \quad  \int\!(u-u\circ\tau) \,d\mu=0.
\]
Indeed, given $R>0$, consider a continuous function $ 0\leq\phi_R\leq 1$, with compact support on $\Omega\times B_{R+1}(0)$, such that $ \phi_R \equiv 1 $ on $\Omega\times B_R(0)$. Then
\[
u-u\circ\tau \geq (u-u\circ\tau)\phi_R + \min_{\Omega\times\mathbb{R}^d}(u-u\circ\tau)(1-\phi_R).
\]
Since $\lambda$ and $\mu$ coincide on $C^0_0(\Omega\times\mathbb{R}^d) + \mathbb{R}\mathds{1}$, one obtains
\[
0 = \lambda(u-u\circ\tau) \geq \int\! (u-u\circ\tau)\phi_R \,d\mu + \min_{\Omega\times\mathbb{R}^d}(u-u\circ\tau)\int\! (1-\phi_R) \,d\mu.
\]
By letting $R\to+\infty$, it follows that $ \int\! (u-u\circ\tau) \,d\mu \leq 0$ and the claim is proved by changing $u$ to $-u$. In particular, $\mu$ is holonomic. We claim that
\[
\forall \, f \in X, \quad \int\! f\,d\mu + \inf_{\Omega\times\mathbb{R}^d} (L-f) \leq \bar K.
\]
Indeed, we first notice that the left hand side does not change by adding a constant to $f$. Moreover, if $f\geq 0$ and $0 \leq f_R \leq f$ is any continuous function with compact support on $\Omega\times B_{R+1}(0)$ which is identical to $f$ on $\Omega\times B_R(0)$, the claim follows by letting $R\to+\infty$ in
\[
\int\! f_R\,d\mu + \inf_{\Omega\times\mathbb{R}^d} (L-f) \leq \lambda(f_R) + \inf_{\Omega\times\mathbb{R}^d} (L-f_R) \leq \bar K.
\]
We finally prove the opposite inequality $\bar L \leq  \bar K$. Given $R>0$, denote $L_R = \min(L,R)$. Since $L$ is coercive, $L_R \in X$. Then $L-L_R \geq 0$ and $\int\! L_R \,d\mu \leq \bar K$. By letting $R\to+\infty$, one obtains $\int\! L \,d\mu \leq \bar K$ for some holonomic measure $\mu$.
\end{proof}

We give a second proof of lemma \ref{lemma:MatherDualApproach}. We will use  basic properties of the Kantorovich-Rubinstein topology
on the set of probabilities measures on a Polish space $ (X, d) $ and a version of the Topological Minimax Theorem which is a generalization of Sion's classical result \cite{Sion}. For a recent review on the last topic, see \cite{Tuy}. We state a particular case of theorem 5.7 there.

\begin{theorem}[\bf Topological Minimax Theorem \cite{Tuy}]
\label{theorem:TopologicalMinimaxTheorem}
 Let $ X $ and $ Y $ be Hausdorff topological spaces. Let $F(x,y) :  X \times Y  \to \mathbb{R}$ be a real-valued function. Define $\eta := \sup_{y\in Y} \inf_{x\in X} F(x,y)$ and assume there exists a real number $\alpha^* > \eta$ such that
\begin{enumerate}
 \item \label{item:TopologicalMinimaxTheorem_1} $\forall \, \alpha \in (\eta, \alpha^*)$,  for every finite set $\emptyset \neq H \subset Y$,  $\cap_{y\in H} \{ x \in X : F(x,y) \le \alpha  \} $ is either empty or connected;

\item \label{item:TopologicalMinimaxTheorem_2} $\forall \, \alpha \in (\eta, \alpha^*)$, for every set $K \subset X$,  $\cap_{x\in K} \{ y \in Y : F(x,y) > \alpha \}$ is either empty or connected;

 \item  \label{item:TopologicalMinimaxTheorem_3}  for any $y\in Y$ and $x \in X$, $ F(x,y)$ is lower semi-continuous in $x$ and upper semi-continuous in $y$;

 \item \label{item:TopologicalMinimaxTheorem_4}  there exists  a finite set $M \subset Y$ such that $\cap_{y\in M} \{ x \in X : F(x,y) \le \alpha^*  \}$ is compact and non-empty.
\end{enumerate}
Then,
$$ \inf_{x\in X} \sup_{y \in Y} F(x,y) = \sup_{y \in  Y} \inf_{x\in X} F(x,y).$$
\end{theorem}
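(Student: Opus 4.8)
The plan is to run the classical scheme behind Sion's minimax theorem \cite{Sion}, with two substitutions dictated by the weaker hypotheses: the convexity of sub‑ and superlevel sets is replaced throughout by the connectedness conditions (\ref{item:TopologicalMinimaxTheorem_1}) and (\ref{item:TopologicalMinimaxTheorem_2}), and the compactness of $X$ (which is not assumed here) is replaced by hypothesis (\ref{item:TopologicalMinimaxTheorem_4}). Since $\inf_{x}\sup_{y}F(x,y)\ge\sup_{y}\inf_{x}F(x,y)=\eta$ holds trivially, everything reduces to the inequality $\inf_{x}\sup_{y}F\le\eta$. I would argue by contradiction: assuming $\inf_{x}\sup_{y}F>\eta$, fix a level $\alpha$ with $\eta<\alpha<\min\{\alpha^{*},\ \inf_{x}\sup_{y}F\}$, and set $S_{y}:=\{x\in X:F(x,y)\le\alpha\}$ and $Z:=\bigcap_{y\in M}\{x\in X:F(x,y)\le\alpha^{*}\}$, where $M$ is the finite set from hypothesis (\ref{item:TopologicalMinimaxTheorem_4}). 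Each $S_{y}$ is closed by lower semicontinuity of $F$ in $x$, and $Z$ is compact and nonempty. If $\bigcap_{y\in Y}S_{y}\ne\emptyset$, then any $x_{0}$ in this intersection satisfies $\sup_{y}F(x_{0},y)\le\alpha<\inf_{x}\sup_{y}F$, which is absurd; so the whole problem becomes: show $\bigcap_{y\in Y}S_{y}\ne\emptyset$.

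First I would reduce this to a statement about finite intersections, using compactness of $Z$. Since $\alpha<\alpha^{*}$ we have $S_{y}\subseteq\{x:F(x,y)\le\alpha^{*}\}$, hence $Z\supseteq\bigcap_{y\in M}S_{y}$, so that for any finite $H\subset Y$
\[
\Big(\bigcap_{y\in H}S_{y}\Big)\cap Z\ \supseteq\ \bigcap_{y\in H\cup M}S_{y}.
\]
Thus $\{S_{y}\cap Z\}_{y\in Y}$ is a family of closed subsets of the compact space $Z$, and by the finite intersection property $\bigcap_{y\in Y}(S_{y}\cap Z)\ne\emptyset$ as soon as $\bigcap_{y\in H}S_{y}\ne\emptyset$ for every finite nonempty $H\subset Y$. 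So it suffices to prove this last claim, which I would do by induction on $|H|$. The case $|H|=1$ is immediate: $\inf_{x}F(x,y)\le\eta<\alpha$, so $S_{y}\ne\emptyset$.

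The inductive step is the heart of the argument and the place where I expect the real work to lie. Assuming the claim for $(n-1)$-element sets, take $H=\{y_{1},\dots,y_{n}\}$ and suppose for contradiction that $\bigcap_{i=1}^{n}S_{y_{i}}=\emptyset$ while, by the induction hypothesis, all $(n-1)$-fold intersections are nonempty. Following Sion, single out two indices, say $y_{n-1}$ and $y_{n}$, put $C:=\bigcap_{i=1}^{n-2}S_{y_{i}}$ (interpreting $C=X$ when $n=2$), and record that $C\cap S_{y_{n-1}}=\bigcap_{i\le n-1}S_{y_{i}}\ne\emptyset$ and $C\cap S_{y_{n}}\ne\emptyset$, while $C\cap S_{y_{n-1}}\cap S_{y_{n}}=\emptyset$; moreover both $C\cap S_{y_{n-1}}$ and $C\cap S_{y_{n}}$ are connected by hypothesis (\ref{item:TopologicalMinimaxTheorem_1}). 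Then choose an intermediate level $\alpha'$ with $\eta<\alpha'<\alpha$ and play the $X$-side connectedness against the $Y$-side connectedness: hypothesis (\ref{item:TopologicalMinimaxTheorem_2}) makes the superlevel slices $\bigcap_{x\in K}\{y:F(x,y)>\alpha'\}$ connected, which furnishes a connected ``corridor'' of parameters in $Y$ relating $y_{n-1}$ to $y_{n}$. Running the usual Sion-type separation/bisection argument along this corridor — using lower semicontinuity in $x$ to keep the pieces closed (hence compact inside $Z$) and upper semicontinuity in $y$ together with a compactness argument to replace the corridor by finitely many test points — one produces a point of $C$ lying in both $S_{y_{n-1}}$ and $S_{y_{n}}$, contradicting $C\cap S_{y_{n-1}}\cap S_{y_{n}}=\emptyset$. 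The delicate points, and the main obstacle, are: keeping the chain $\eta<\alpha'<\alpha<\alpha^{*}$ coherent so that hypotheses (\ref{item:TopologicalMinimaxTheorem_1}) and (\ref{item:TopologicalMinimaxTheorem_2}), valid only on the open interval $(\eta,\alpha^{*})$, are legitimately invoked; carrying out the merging step with only the Hausdorff structure at hand, so that ``joining $y_{n-1}$ to $y_{n}$ by a path'' must be read through the abstract connectedness furnished by (\ref{item:TopologicalMinimaxTheorem_2}) rather than through an actual segment as in the convex case; and invoking hypothesis (\ref{item:TopologicalMinimaxTheorem_4}) at the right moment to supply the compactness that $X$ itself lacks. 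Once the inductive step is secured, the finite intersection argument above closes the proof. (If one prefers to black-box this, the statement is exactly the special case of Theorem~5.7 in \cite{Tuy} and may simply be cited.)
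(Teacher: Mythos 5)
The paper does not prove this statement: it presents it as a quotation, explicitly noting that it is ``a particular case of theorem~5.7'' of Tuy's survey \cite{Tuy}, and then simply invokes it. So the ``proof'' in the paper is exactly the citation your final parenthetical offers as a fallback; that sentence is in fact the operative answer, and the rest of your proposal is an attempt at a proof the paper never gives.

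As an attempt, the roadmap is sensible — reduce to $\inf_x\sup_y F\le\eta$, convert to a finite-intersection property for the sublevel sets $S_y$ inside the compact set $Z$ supplied by hypothesis~(\ref{item:TopologicalMinimaxTheorem_4}), then induct on $|H|$ — but the inductive step, which you yourself call ``the heart of the argument,'' is not carried out. The ``corridor'' device is not actually made to work: to exploit hypothesis~(\ref{item:TopologicalMinimaxTheorem_2}) you would need a set $K\subset X$ such that the superlevel intersection $\bigcap_{x\in K}\{y:F(x,y)>\alpha'\}$ is both connected \emph{and} contains $y_{n-1}$ and $y_n$. You never exhibit such a $K$, and indeed for the natural candidate $K=C\cap Z$ this intersection is typically empty, precisely because $C\cap S_{y_{n-1}}$ and $C\cap S_{y_n}$ are nonempty (so each of $y_{n-1},y_n$ is \emph{excluded} from the intersection by some $x\in K$). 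The genuine content of Tuy's theorem is in resolving this tension — choosing the right nested family of sets $K$ and levels, and making the connectedness hypotheses interact with the compactness in hypothesis~(\ref{item:TopologicalMinimaxTheorem_4}) — and a one-paragraph appeal to ``the usual Sion-type separation/bisection argument'' does not supply it. Since the paper sidesteps all of this by citing [Tuy], the cleanest route, and the one consistent with the paper, is to cite the theorem rather than sketch it.
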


We recall basic facts on the Kantorovich-Rubinstein topology (see \cite{Vilani2008} or \cite{AmbrosioGigliSavare}). Given a Polish space $Z$ and  a point $ z_0 \in Z $, let us consider the set of probability measures on the Borel sets of $Z$ that admit a finite first moment, {\em i.e.},
$$ \mathcal P^1(Z) = \big\{ \mu : \int_Z d(z_0, z) \, d\mu(z) < +\infty  \big\}. $$
Notice that this set does not depend on the choice of the point $z_{0}$. The Wasserstein distance or Kantorovitch-Rubinstein distance on $ \mathcal P^1(Z) $ is a distance between two  $\mu, \nu \in \mathcal P^1(Z)$ defined  by
$$ W_1(\mu, \nu) := \inf \big\{  \int_{Z \times Z} d(x,y) \, d\gamma(x,y) : \  \gamma \in \Gamma(\mu, \nu) \big\},$$
where $\Gamma(\mu, \nu)$ denotes the set of all the probability measures $ \gamma $ on $Z \times Z$ with marginals $\mu$ and $\nu$ on the
first and second factors, respectively.

Recall that a continuous function $L \colon Z \to \mathbb R$ is said to be superlinear on a Polish space $ Z $ if the map defined by
$ z \in Z \mapsto L(z)/\big(1+d(z, z_0)\big) \in \mathbb R $ is proper.
Notice that this definition is also independent of the choice of $z_{0}$ and, by considering the distance $\hat{d} :=  \min (d,1)$ on $Z$,
any proper function is superlinear for $\hat{d}$.
The following lemma is easy to prove and gives us a sufficient condition for  relative compactness in $ \mathcal P^1(Z)$ (see theorem 6.9 in \cite{Vilani2008} or \cite{AmbrosioGigliSavare} for a more detailed discussion).

\begin{lemma}
\label{lemma:CompactnessProperty}
Let $Z$ be a Polish space, $L:Z\to\mathbb{R}$ be a  continuous function, and $X:= \{ \mu \in \mathcal{P}^1(Z) \,:\, \int\! L \,d\mu <+\infty \}$ be equipped with the Kantorovich-Rubinstein distance. Then
\begin{enumerate}
\item  the map $\mu \in X \mapsto \int L \, d\mu$ is lower semi-continuous;
\item  if $L$ is a superlinear, then, for every $\alpha \in \mathbb R$, the set $\{ \mu \in X : \int\! L \, d\mu \le \alpha \}$ is compact 
(the map $\mu \in X \mapsto \int L \, d\mu$ is proper).
\end{enumerate}
\end{lemma}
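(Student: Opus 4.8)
The plan is to prove Lemma~\ref{lemma:CompactnessProperty} directly from the definition of the Kantorovich-Rubinstein topology together with the characterization of weak convergence of probability measures with uniformly integrable first moments.

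\medskip

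\textbf{Item 1 (lower semi-continuity of $\mu \mapsto \int L\,d\mu$).}
First I would recall that convergence in $W_1$ on $\mathcal P^1(Z)$ is equivalent to weak${}^*$ convergence together with convergence of first moments, i.e. $W_1(\mu_n,\mu)\to 0$ iff $\mu_n \to \mu$ narrowly and $\int d(z_0,z)\,d\mu_n \to \int d(z_0,z)\,d\mu$ (this is theorem~6.9 in \cite{Vilani2008}, which the excerpt already invokes). Given a sequence $\mu_n \to \mu$ in $X$, write $L = L^+ - L^-$; the negative part $L^-$ is dominated by a constant plus a multiple of $d(z_0,\cdot)$ (since $L$ is continuous and, in the case of interest, superlinear, it is in particular bounded below by an affine function of the distance), so $\int L^-\,d\mu_n \to \int L^-\,d\mu$ by the convergence-of-moments part of $W_1$-convergence. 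For the positive part, $L^+$ is a nonnegative lower semi-continuous (indeed continuous) function, so Portmanteau gives $\liminf_n \int L^+\,d\mu_n \ge \int L^+\,d\mu$. Combining the two yields $\liminf_n \int L\,d\mu_n \ge \int L\,d\mu$, which is the claimed lower semi-continuity. I would also note in passing that $X$ itself is exactly $\{\mu \in \mathcal P^1(Z) : \int L^+\,d\mu < +\infty\}$ once $L$ is bounded below affinely, so the functional is well-defined with values in $(-\infty,+\infty]$ on $X$.

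\medskip

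\textbf{Item 2 (properness when $L$ is superlinear).}
Fix $\alpha \in \mathbb R$ and set $K_\alpha := \{\mu \in X : \int L\,d\mu \le \alpha\}$. By item~1, $K_\alpha$ is closed in $X$, so it suffices to show it is relatively compact, i.e. sequentially precompact for $W_1$. Let $(\mu_n) \subset K_\alpha$. The key point is that superlinearity of $L$ forces a uniform control: for every $\epsilon>0$ pick $R$ with $L(z) \ge \epsilon^{-1}\bigl(1+d(z_0,z)\bigr)$ whenever $d(z_0,z) \ge R$ (possible since $z \mapsto L(z)/(1+d(z_0,z))$ is proper, hence $\to +\infty$ off large balls, after subtracting its minimum); together with a lower bound $L \ge -c$ this yields, for each $n$,
\[
\int_{d(z_0,z)\ge R} \bigl(1+d(z_0,z)\bigr)\,d\mu_n \le \epsilon \int_{d(z_0,z)\ge R} L\,d\mu_n \le \epsilon\,(\alpha + c).
\]
This simultaneously gives tightness of $(\mu_n)$ (take $\epsilon$ small) and uniform integrability of the first moments $z \mapsto d(z_0,z)$ with respect to $(\mu_n)$. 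By Prokhorov's theorem a narrowly convergent subsequence $\mu_{n_k} \to \mu$ exists, and uniform integrability of first moments upgrades this to $W_1$-convergence (again theorem~6.9 in \cite{Vilani2008}); finally lower semi-continuity from item~1 gives $\int L\,d\mu \le \liminf_k \int L\,d\mu_{n_k} \le \alpha$, so $\mu \in K_\alpha$. Hence $K_\alpha$ is compact, which is precisely properness of $\mu \mapsto \int L\,d\mu$.

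\medskip

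\textbf{Main obstacle.} The only genuinely delicate point is the passage from narrow (weak${}^*$) convergence to $W_1$-convergence, and dually from $W_1$-bounds to tightness; everything hinges on the uniform-integrability-of-first-moments estimate displayed above, which is where superlinearity is actually used. I would state the needed equivalence as a citation to theorem~6.9 of \cite{Vilani2008} rather than reprove it. The subtlety of making $L$ bounded below by an affine function of the distance is real but mild: if $L$ is merely continuous (item~1) one cannot assume this, so for item~1 I would keep the argument purely Portmanteau-based on $L^+$ and handle $L^-$ only under the additional hypotheses where it is controlled, or more cleanly restrict the lower-semicontinuity claim to the setting of the paper where $L \ge -c$ always holds; for item~2 superlinearity makes the affine lower bound automatic, so no extra hypothesis is needed there.
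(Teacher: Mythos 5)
The paper does not actually carry out a proof of this lemma: it only remarks that it is ``easy to prove'' and points the reader to theorem~6.9 of Villani and to Ambrosio--Gigli--Savar\'e. Your argument is a correct and entirely standard filling-in of precisely what those references supply: item~1 by lower semicontinuity of $\int (L+c)\,d\mu$ under weak convergence once $L$ is bounded below (Portmanteau), and, if one insists on the $L=L^+-L^-$ split, convergence of $\int L^-\,d\mu_n$ via the linear-growth test-function characterization of $W_1$-convergence; item~2 by deriving tightness and uniform integrability of the first moments from superlinearity, invoking Prokhorov, and then upgrading narrow convergence to $W_1$-convergence via the same theorem~6.9 before closing with item~1. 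So the approach is the one the paper itself intends.

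Two small remarks. First, you are right to flag that item~1 is not literally true for an arbitrary continuous $L$ without some lower control on $L^-$; a counterexample with $L(z)=-d(z_0,z)^2$ and $\mu_n=(1-1/n)\delta_{z_0}+(1/n)\delta_{z_n}$, $d(z_0,z_n)=\sqrt n$, shows $W_1$-lower semicontinuity can fail. The intended hypothesis (met in every application here, where $L$ is coercive or superlinear on $\Omega\times\mathbb{R}^d$ with $\Omega$ compact) is that $L$ is bounded below, in which case the short argument via Portmanteau applied to $L+c\geq 0$ suffices and the $L^+/L^-$ split is unnecessary. Second, the parenthetical ``after subtracting its minimum'' is not needed: properness of $z\mapsto L(z)/(1+d(z_0,z))$ already means its sublevel sets are compact, hence bounded, so the inequality $L(z)\geq\epsilon^{-1}(1+d(z_0,z))$ for $d(z_0,z)\geq R$ follows directly. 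Neither point affects the correctness of the proof.
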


\begin{proof}[Second proof of lemma \ref{lemma:MatherDualApproach}]
Lemma \ref{lemma:CompactnessProperty} applied  to the $ C^0 $ superlinear Lagrangian $L: \Omega \times \mathbb R^d \to \mathbb{R}$  guarantees the existence of a minimizing  probability for $L$. This minimizing measure is holonomic since  the set of holonomic measures is a closed subset of $ \mathcal P^1(\Omega \times \mathbb R^d) $ for the
Kantorovich-Rubinstein distance.
Notice that for every $ u \in C^0(\Omega)$,
\begin{eqnarray*}
\inf_{\omega \in \Omega, \ t \in \mathbb{R}^d} (L+u-u\circ\tau)(\omega,t) &=&  \inf_{\omega \in \Omega, \ t \in \mathbb R^d} \int (L + u - u \circ \tau) \, d\delta_{(\omega,t)} \\
& \ge & \inf_{\mu \in \mathcal P^1(\Omega \times \mathbb R^d)} \int (L + u - u \circ \tau) \, d\mu \\
& \ge & \inf_{\omega \in \Omega, \ t \in \mathbb R^d} (L + u - u \circ \tau) (\omega, t).
\end{eqnarray*}
Let $X:=\{ \mu \in \mathcal{P}^1(\Omega \times \mathbb{R}^d) \,:\, \int\! L \, d\mu < +\infty \}$ and $Y := C^0(\Omega)$. Then
\begin{equation*}
\bar K = \sup_{u \in Y} \inf_{\mu \in X} \int (L + u - u \circ \tau) \, d\mu \leq \min_{\omega \in \Omega} L(\omega,0).
\end{equation*}
Define  $\alpha^* := \min_{\omega \in \Omega}L(\omega,0)+1 >  \bar K$ and
\[
F : (\mu, u) \in  X \times Y\mapsto \int (L + u - u \circ \tau) \, d\mu.
\]
Since $F$ is affine in both variables, it satisfies items \ref{item:TopologicalMinimaxTheorem_1} and \ref{item:TopologicalMinimaxTheorem_2} of theorem \ref{theorem:TopologicalMinimaxTheorem}. Item~\ref{item:TopologicalMinimaxTheorem_3} is also satisfied since $F(\mu,u)$ is lower semi-continuous in $\mu$ and continuous in $u$. By taking $M = \{0\}$, the singleton set reduced to the null function in $Y$,  the set $\cap_{u\in M} \{ \mu \in X \,:\, F(\mu,u) \leq \alpha^* \}$ is compact and non-empty, so that item \ref{item:TopologicalMinimaxTheorem_4} is satisfied. The Topological Minimax Theorem therefore implies 
\begin{align}\label{eq:barK}
\bar{K} = \inf_{ \mu \in X}  \sup_{u \in Y} \int (L + u - u\circ\tau) \, d\mu.
\end{align}
We show that every $\mu \in X$  such that $\sup_{u \in Y} \int (L + u - u\circ\tau) \, d\mu < +\infty$ is holonomic. If  not, there would exist a function $u \in C^0(\Omega)$ such that $ \int (u - u \circ \tau) \, d\mu >0$. Multiplying $(u - u \circ \tau)$ by a positive scalar $\lambda$ and letting $\lambda \to +\infty$ would lead to a contradiction.  Thus, the infimum in~(\ref{eq:barK}) may be taken over holonomic probabilty measures with respect to which $ L $ is integrable. We finally conclude that
\begin{equation*}
\bar{K} =  \inf_{ \mu \in X}  \sup_{u \in Y} \int (L + u - u\circ\tau) \, d\mu = \inf_{\mu  \in \mathbb M_{hol}} \int L \, d\mu = \bar{L}. \qedhere
\end{equation*}
\end{proof}

The holonomic condition shall not be confused with invariance in the usual sense of dynamical systems. We may nevertheless introduce a larger space than $\Omega\times\mathbb{R}^d$ and a suitable dynamics on such a space. We will apply Birkhoff ergodic theorem with respect to that dynamical system to prove that $\bar L \geq \bar E$.

\begin{notation}
\label{notation:MarkovExtension}
Consider $\hat\Omega := \Omega \times (\mathbb{R}^d)^{\mathbb{N}}$ equipped with the product topology and the Borel sigma-algebra. 
In particular, $\hat\Omega$ becomes a complete separable metric space. Any probability measure $\mu$ on $\Omega \times \mathbb{R}^d$ admits a unique disintegration along the first projection $pr: \Omega\times\mathbb{R}^d \to \Omega$,
\[
\mu(d\omega,dt) := pr_*(\mu)(d\omega) P(\omega,dt),
\]
where $\{P(\omega,dt)\}_{\omega\in\Omega}$ is a measurable family of probability measures on $\mathbb{R}^d$. Let $\hat\mu$ be the Markov measure with initial distribution $pr_*(\mu)$ and transition probabilities $P(\omega,dt)$. For Borel bounded functions of the form $f(\omega,t_0,\ldots,t_n)$, we have
\begin{equation*}
\hat\mu(d\omega,d\underline t)  = pr_*(d\omega)P(\omega,dt_0)P(\tau_{t_0}(\omega),dt_1)\cdots P(\tau_{t_0+\cdots+t_{n-1}}(\omega),dt_n).
\end{equation*}
If $\mu$ is holonomic, then $\hat\mu$ is invariant with respect to the shift map
\[
\hat\tau : (\omega,t_0,t_1,\ldots) \mapsto (\tau_{t_0}(\omega), t_1,t_2, \ldots).
\]
We will call $\hat\mu$ the Markov extension of $\mu$. Conversely, the projection of any $\hat \tau$-invariant probability measure $\tilde\mu$ on $\Omega\times\mathbb{R}^d$ is holonomic. This gives a fourth way to compute $\bar E$
\[
\bar E = \inf\Big\{ \int\! \hat L \,d\tilde\mu : \tilde\mu \text{ is a $\hat\tau$-invariant probability measure on $\hat\Omega$} \Big\},
\]
where $\hat L(\omega,t_0,t_1,\ldots) := L(\omega,t_0)$ is the natural extension of $L$ on $\hat \Omega$.
\end{notation}

\begin{proof}[End of proof of propositions \ref{proposition:MatherDefinition}, \ref{proposition:SupInfFormula} and \ref{proposition:LocalGroundEnergy}] $ $

-- Part 1: We know that $\bar K = \bar L$ by lemma \ref{lemma:MatherDualApproach}.

\smallskip

-- Part 2: We claim that $\bar E_\omega = \bar E$ for all $ \omega \in \Omega $. By the topological stationarity~(\ref{equation:Equivariance}) of $E_\omega$
and by the minimality of $\tau_t$, for any $ n \in \mathbb N $, we have that
{\begin{eqnarray*}
\inf_{x_0,\ldots,x_n \in \mathbb{R}^d} \ E_\omega(x_0,\ldots,x_n)  &=&\inf_{x_0,\ldots,x_n \in \mathbb{R}^d} \ \inf_{t\in\mathbb{R}^d} \   E_{\omega}(x_0+t,\ldots,x_n+t) \\
 & = & \inf_{x_0,\ldots,x_n \in \mathbb{R}^d} \ \inf_{t\in\mathbb{R}^d} \ E_{\tau_t(\omega)}(x_0,\ldots,x_n) \\
& = & \inf_{x_0,\ldots,x_n \in \mathbb{R}^d} \ \inf_{\omega \in \Omega}  \ E_\omega(x_0,\ldots,x_n),
\end{eqnarray*}}
which clearly yields $ \bar E_\omega = \bar E$ for every $ \omega \in \Omega $.

\smallskip

-- Part 3: We claim that $\bar E \geq \bar K$. Indeed, given $c < \bar K$, let $u\in C^0(\mathbb{R}^d)$ be such that, for every $\omega\in \Omega$ and any $t\in\mathbb{R}^d$, $u(\tau_t(\omega))-u(\omega) \leq L(\omega,t)-c$. Define $u_\omega(x) = u(\tau_x(\omega))$. Then,
\[
\forall \, x,y \in\mathbb{R}^d, \quad u_\omega(y)-u_\omega(x) \leq E_\omega(x,y)-c,
\]
which implies $\bar E  \geq c$ for every $c<\bar K$, and therefore $\bar E \geq \bar K$.

\smallskip

-- Part 4: We claim that $\bar L \geq  \bar E$. Let $\mu$ be a minimizing holonomic probability measure with Markov extension $\hat\mu$ (see notation \ref {notation:MarkovExtension}). If $(\omega,\underline{t})\in\hat\Omega$, then
\[
\sum_{k=0}^{n-1} \hat L\circ\hat\tau^k(\omega,\underline{t}) = E_\omega(x_0,\ldots,x_n) \quad\text{with}\quad x_0 = 0 \text{ and } x_k = t_0+\cdots+t_{k-1},
\]
and, by Birkhoff ergodic theorem,
\[
\bar E \leq \int\! \lim_{n\to+\infty} \frac{1}{n} \sum_{k=0}^{n-1} \hat L \circ \hat\tau^k \,d\hat\mu = \int\! L \,d\mu = \bar L. \qedhere
\]
\end{proof}

A backward calibrated sub-action $u$ as given by the Lax-Oleinik operator in the periodic context (for details, see~\cite{GaribaldiThieullen2011_01}) is not available in general for an almost periodic interaction model. 
A calibrated sub-action $u$ in this setting would be  a $C^0(\Omega)$ function such that, if $E_{\omega,u}$ is defined by
\begin{equation*}
E_{\omega,u}(x,y) := E_\omega(x,y) - \big[ u\circ\tau_{y}(\omega) - u\circ\tau_{x}(\omega) \big] - \bar E,
\end{equation*}
then
\begin{gather*}
\left\{\begin{array}{l}
\forall \, \omega \in\Omega, \ \forall \, x,y \in \mathbb{R}^d, \quad  E_{\omega,u}(x,y) \geq 0,  \\
\forall \, \omega \in\Omega, \ \forall \, y \in \mathbb{R}^d, \ \exists \, x \in \mathbb{R}^d, \quad E_{\omega,u}(x,y)=0.
\end{array}\right.
\end{gather*}
We do not know whether such a function exists. We will weaken this notion by introducing a notion of measurable subadditive cocycle. 
Notice first  that the function $U(\omega,t) := u\circ\tau_t(\omega) - u(\omega)$ is a cocycle, namely, it satisfies
\begin{equation}
\forall \, \omega \in \Omega, \ \forall \, s,t \in\mathbb{R}^d, \quad U(\omega,s+t) = U(\omega,s) + U(\tau_s(\omega),t).
\end{equation}
A natural candidate to be a subadditive function is given by the Ma\~n\'e potential in the periodic context. 
For almost periodic interaction models, we introduce the following definition.

\begin{definition} \label{definition:ManeSubCocycle2}
Let $L$ be a coercive Lagrangian. We call Ma\~n\'e subadditive cocycle associated with  $L$ the function defined on $\Omega \times \mathbb{R}^d$ by
\[
\Phi(\omega,t) := \inf_{n\geq 1} \ \inf_{0=x_0,x_1,\ldots,x_n=t} \ \sum_{k=0}^{n-1} \big[ L(\tau_{x_k}(\omega),x_{k+1}-x_k) - \bar E \big].
\]
We call Ma\~n\'e potential in the environment $\omega$ the function on $\mathbb{R}^d \times \mathbb{R}^d$ given by
\[
S_\omega(x,y) := \Phi(\tau_x(\omega),y-x) = \inf_{n\geq1} \ \inf_{x=x_0,\ldots,x_n=y} \big[ E_\omega(x_0,\ldots,x_n)-n\bar E \big].
\]
\end{definition}

The very definitions of $\Phi$ and $\bar E$ show that $\Phi$ takes finite values and is a subadditive cocycle,
\begin{align}
&\forall \, \omega \in \Omega, \ \forall \, s,t \in \mathbb{R}, \quad \Phi(\omega,s+t) \leq \Phi(\omega,s) + \Phi(\tau_s(\omega),t), \\
&\forall \, \omega \in \Omega, \ \forall \, t \in \mathbb{R}^d, \quad \Phi(\omega,t) \leq L(\omega,t) - \bar E,\\
&\forall \, \omega \in \Omega, \quad \Phi(\omega,0)\geq0, \label{eq:SubadditiveCocycle_3}\\
&\forall \, \omega \in \Omega, \ \forall \, t \in \mathbb{R}^d, \quad \Phi(\omega, t) \ge \bar E - L(\tau_t(\omega), -t).
\end{align}
Inequality \eqref{eq:SubadditiveCocycle_3} is proved using the fact that, for a fixed $\omega$, the sequence
\[
\bar E_n(\omega,0) := \inf_{x_1,\ldots,x_{n-1}}E_\omega(0,x_1,\ldots,x_{n-1},0)
\] 
is subadditive in $n$ and $ \bar E \le \lim_{n\to\infty}\frac{1}{n} \bar E_n(\omega,0) = \inf_{n\geq1} \frac{1}{n} \bar E_n(\omega,0)$.

We will prove in addition that $\Phi$ is upper semi-continuous and Mather-calibrated (lemma \ref{lemma:CalibratedManeSubCocycle})  in the following sense.

\begin{definition} \label{definition:CalibratedSubCocycle}
A measurable function $U:\Omega \times \mathbb{R}^d \to [-\infty,+\infty[$ is called a  Mather-calibrated subadditive cocycle if the following properties are satisfied:
\par-- $\forall \, \omega \in \Omega, \ \forall \, s,t \in\mathbb{R}^d, \quad U(\omega,s+t) \leq U(\omega,s) + U(\tau_s(\omega),t)$,
\par-- $\forall \, \omega \in \Omega, \ \forall \, s,t \in\mathbb{R}^d, \quad U(\omega,t) \leq L(\omega,t) - \bar L \quad \text{and} \quad U(\omega,0) \geq 0$,
\par-- $\forall \, \mu \in\mathbb{M}_{hol}, \, \text{ if } \,  \int\! L \,d\mu < +\infty, \ \text{ then } \ \int\! U(\omega,\sum_{k=0}^{n-1}t_k) \, \hat\mu(d\omega,d\underline t) \geq 0, \,\,\, \forall \, n \geq 1 $,
\par{\color{white}--} where $ \hat \mu $ is the Markov extension of $ \mu $.
\end{definition}

Notice that, provided we know in advance that $U$ is finite,  $U(\omega,0)\geq0$ by replacing $s=t=0$ in the subadditive cocycle inequality.

\begin{lemma}
\label{lemma:CalibratedSubCocycle}
A Mather-calibrated subadditive cocycle $U$ satisfies in addition
\par-- $U(\omega,t)$ is finite everywhere,
\par-- $ \ \sup_{\omega \in \Omega, t\in\mathbb{R}^d} \ |U(\omega,t)| /(1+\|t\|) < +\infty$,
\par-- $\forall \, \mu \in \mathbb{M}_{min}(L), \,\, \forall \, n\geq1, \quad U(\omega,\sum_{k=0}^{n-1}t_k) = \sum_{k=0}^{n-1} [\hat L - \bar L] \circ \hat\tau^k (\omega,\underline{t}) \ $ $ \ \hat\mu $ a.e.
\end{lemma}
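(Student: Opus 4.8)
The plan is to get the three assertions in order, with the first two falling out of the two cocycle inequalities alone and only the third using the minimizing hypothesis. Set $M_1 := \sup_{\omega\in\Omega,\ \|s\|\le1} L(\omega,s)$, which is finite since $L$ is continuous and $\Omega\times\overline{B_1(0)}$ is compact; combining $U(\omega,0)\ge0$ with $U(\omega,0)\le L(\omega,0)-\bar L$ gives $\bar L\le L(\omega,0)$ for all $\omega$, hence $\bar L\le M_1$. For the \emph{upper bound} I would, given $t$, put $n:=\max(1,\lceil\|t\|\rceil)$ and $s:=t/n$ (so $\|s\|\le1$), iterate the subadditive cocycle inequality along the decomposition $t=s+\cdots+s$, and then use $U\le L-\bar L$:
\[
U(\omega,t)\ \le\ \sum_{k=0}^{n-1}U\bigl(\tau_{ks}(\omega),s\bigr)\ \le\ n(M_1-\bar L)\ \le\ (M_1-\bar L)(1+\|t\|).
\]
For the \emph{lower bound} (hence finiteness), apply the cocycle inequality to $0=t+(-t)$: $0\le U(\omega,0)\le U(\omega,t)+U(\tau_t(\omega),-t)$, so $U(\omega,t)\ge -U(\tau_t(\omega),-t)\ge -(M_1-\bar L)(1+\|t\|)$ by the bound just obtained. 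This proves the first two bullets with $\sup|U(\omega,t)|/(1+\|t\|)\le M_1-\bar L$.

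For the third bullet, fix $\mu\in\mathbb{M}_{min}(L)$ with Markov extension $\hat\mu$, and recall that with $x_k:=t_0+\cdots+t_{k-1}$ one has $\sum_{k=0}^{n-1}t_k=x_n$ and $\sum_{k=0}^{n-1}[\hat L-\bar L]\circ\hat\tau^k(\omega,\underline t)=E_\omega(0,x_1,\ldots,x_n)-n\bar L$. Iterating subadditivity and using $U\le L-\bar L$ exactly as above gives the \emph{pointwise} inequality $U(\omega,x_n)\le\sum_{k=0}^{n-1}[\hat L-\bar L]\circ\hat\tau^k(\omega,\underline t)$ for every $(\omega,\underline t)$ and every $n\ge1$, so that
\[
g_n\ :=\ \sum_{k=0}^{n-1}[\hat L-\bar L]\circ\hat\tau^k\ -\ U\Bigl(\omega,\textstyle\sum_{k=0}^{n-1}t_k\Bigr)\ \ge\ 0,
\]
and it suffices to show $\int g_n\,d\hat\mu\le0$. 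Since $\hat\mu$ is $\hat\tau$-invariant and its $(\omega,t_0)$-marginal is $\mu$, one has $\int\hat L\circ\hat\tau^k\,d\hat\mu=\int\hat L\,d\hat\mu=\int L\,d\mu=\bar L$, the last equality being precisely that $\mu$ is minimizing; hence the Birkhoff-sum part contributes $\sum_{k=0}^{n-1}(\bar L-\bar L)=0$. The third clause of Definition \ref{definition:CalibratedSubCocycle}, applied to $\mu\in\mathbb{M}_{min}(L)\subset\mathbb{M}_{hol}$ (which integrates $L$), gives $\int U(\omega,\sum_{k=0}^{n-1}t_k)\,d\hat\mu\ge0$. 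Therefore $\int g_n\,d\hat\mu\le0$, so $g_n=0$ $\hat\mu$-a.e.; intersecting the countably many null sets over $n\ge1$ yields the claimed equality for all $n$ simultaneously.

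The step I expect to be the real content, and the main obstacle, is the identity $\int g_n\,d\hat\mu=0$: it forces one to use, at the same time, the $\hat\tau$-invariance of the Markov extension \emph{and} that $\mu$ attains the infimum $\bar L$ in order to collapse the Birkhoff sum, together with the third axiom of a Mather-calibrated cocycle to control $\int U$. For all of this one must keep the integrals finite, which is exactly why the a priori linear growth of $U$ obtained in the first two bullets has to be proved first — it makes each $g_n$ integrable, since $L$ is bounded below and $\int\|t\|\,d\mu<+\infty$ for minimizing measures (this last point being the only place where superlinearity of $L$, rather than mere coerciveness, is needed in this lemma: if $\|t\|\ge R$ then $L(\omega,t)\ge\|t\|$, so $\int_{\|t\|\ge R}\|t\|\,d\mu\le\int L\,d\mu+C<+\infty$).
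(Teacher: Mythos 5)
Your proof is correct and follows essentially the same route as the paper's. For the first two bullets, both you and the paper decompose $t$ into $n$ equal pieces of norm at most $1$ (the paper picks the unique integer $n$ with $n-1\le\|t\|<n$, you pick $n=\max(1,\lceil\|t\|\rceil)$; both give $n\le 1+\|t\|$), iterate the cocycle inequality to get the upper bound $U(\omega,t)\le K(1+\|t\|)$ with $K=\sup_{\|s\|\le1}[L-\bar L]$, and then combine $U(\omega,0)\ge0$ with $U(\omega,0)\le U(\omega,t)+U(\tau_t(\omega),-t)$ for the lower bound. For the third bullet, both of you observe that $g_n:=\sum_{k<n}[\hat L-\bar L]\circ\hat\tau^k-U(\omega,\sum_{k<n}t_k)\ge0$ pointwise by iterating subadditivity, that $\int\sum_{k<n}[\hat L-\bar L]\circ\hat\tau^k\,d\hat\mu=0$ by $\hat\tau$-invariance plus $\int L\,d\mu=\bar L$, and that $\int U\,d\hat\mu\ge0$ by the third axiom; hence $\int g_n\,d\hat\mu=0$ and $g_n=0$ $\hat\mu$-a.e.

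One small remark: you claim superlinearity is indispensable at the integrability step (to get $\int\|t\|\,d\mu<\infty$ and hence $U(\omega,\sum t_k)\in L^1$). That route works, but it is not actually needed: since $g_n\ge0$ one has $U(\omega,\sum t_k)\le\sum[\hat L-\bar L]\circ\hat\tau^k$, whose positive part is integrable from $\int L\,d\mu=\bar L<\infty$; combined with the third axiom (which forces the integral of $U(\omega,\sum t_k)$ to be $\ge0$, so its negative part is integrable), this already makes $g_n$ integrable with $\int g_n\,d\hat\mu=-\int U(\omega,\sum t_k)\,d\hat\mu\le0$. So the lemma goes through under the coercivity assumption of the surrounding propositions without invoking superlinearity. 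This does not affect the correctness of your argument, only the accounting of which hypotheses are genuinely used.
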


\begin{proof} {\it Part 1.}
We show that $U$ is sublinear. Let $K := \sup_{\omega \in \Omega, \ \|t\| \leq 1} [L(\omega,t) - \bar L]$. Fix $t\in\mathbb{R}^d$ and choose the unique integer $n$ such that $n-1 \leq \|t\| < n$.  Let $t_k = \frac{k}{n}t$ for $ k = 0, \ldots, n - 1 $. Then the subadditive cocycle property implies, on the one hand,
\[
\forall \, \omega \in\Omega,  \ \forall \, t \in \mathbb{R}^d, \quad U(\omega,t) \leq \sum_{k=0}^{n-1} U(\tau_{t_k}(\omega), t_{k+1}-t_k) \leq nK \leq (1+\|t\|) K.
\]
On the other hand, thanks to the hypothesis $U(\omega,0)\geq0$, we get the opposite inequality
\[
\forall \, \omega \in\Omega,  \ \forall \, t \in \mathbb{R}^d, \quad U(\omega,t) \geq U(\omega,0) - U(\tau_t(\omega),-t) \geq -(1+\|t\|)K.
\]
We also have shown that $U$ is finite everywhere.

\medskip
\noindent{\it Part 2.} Suppose $\mu$ is minimizing. Since
\[
\forall \, \omega\in\Omega, \ \forall \, t_0,\ldots,t_{n-1} \in \mathbb{R}^d, \quad\sum_{k=0}^{n-1}  \big[\hat L - \bar L \big] \circ \hat\tau^k (\omega,\underline{t}) \geq U \Big(\omega,\sum_{k=0}^{n-1}t_k \Big),
\]
by integrating with respect to $\hat\mu$, the left hand side has a null integral whereas the right hand side has a nonnegative integral. The previous inequality is thus an equality that holds almost everywhere.  \qedhere
\end{proof}

\begin{lemma}
\label{lemma:CalibratedManeSubCocycle}
If $L$ is $C^0$ coercive, then the Ma\~n\'e subadditive cocycle $\Phi$ is upper semi-continuous and Mather-calibrated. In particular, $\Phi = L - \bar L$ on $\text{\rm Mather}(L)$, or more precisely, for every $\mu \in \mathbb{M}_{min}(L)$, being $\hat \mu$ its Markov extension,
\begin{align*}
 \forall \, (\omega,\underline{t}) \in \text{\rm supp}(\hat\mu), \ \forall \, i<j, \quad \Phi \Big( \tau_{\sum_{k=0}^{i-1} t_k} (\omega),\sum_{k=i}^{j-1} t_k \Big) = \sum_{k=i}^{j-1} \big[L-\bar L\big] \circ \hat \tau^k(\omega,\underline{t}).
\end{align*}
In an equivalent manner, if $(\omega,\underline{t}) \in \text{\rm supp}(\hat\mu)$,  $x_0=0$ and $x_{k+1}=x_k+t_k$, $\forall \, k\geq0$, the semi-infinite configuration $\{x_k\}_{k\geq0}$ is calibrated for $E_\omega$ as in {definition} \ref{definition:ManeSubCocycle}:
\[
\forall  \, i<j, \quad  S_\omega(x_i,x_j) = E_\omega(x_i,x_{i+1}, \ldots, x_j) -(j-i) \bar E.
\]
\end{lemma}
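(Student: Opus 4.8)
The plan is to check the two asserted properties of the Mañé subadditive cocycle $\Phi$ and then to read off the calibration identity on the Mather set from Lemma~\ref{lemma:CalibratedSubCocycle}. For upper semi-continuity I would note that, for each fixed number of sites $n\ge1$ and each fixed interior tuple $(y_1,\dots,y_{n-1})$ (with $y_0:=0$), the function $(\omega,t)\mapsto\sum_{k=0}^{n-2}[L(\tau_{y_k}(\omega),y_{k+1}-y_k)-\bar E]+[L(\tau_{y_{n-1}}(\omega),t-y_{n-1})-\bar E]$ is continuous, by joint continuity of $(t,\omega)\mapsto\tau_t(\omega)$ and continuity of $L$; since $\Phi$ is the pointwise infimum of this family over $n$ and the $y_k$, it is upper semi-continuous, and so is $S_\omega(x,y)=\Phi(\tau_x(\omega),y-x)$.

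For the Mather-calibration, the first three items of Definition~\ref{definition:CalibratedSubCocycle} are already in hand from the discussion preceding the statement ($\Phi$ a subadditive cocycle, $\Phi(\omega,t)\le L(\omega,t)-\bar E=L(\omega,t)-\bar L$ by $\bar E=\bar L=\bar K$, and $\Phi(\omega,0)\ge0$); the only new content is the integral condition, and the idea is to truncate the number of sites. Put $\Phi^{\le N}(\omega,t):=\inf_{1\le m\le N}\inf_{0=x_0,\dots,x_m=t}\sum_{k=0}^{m-1}[L(\tau_{x_k}(\omega),x_{k+1}-x_k)-\bar E]$, so that $\Phi^{\le N}\searrow\Phi$. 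Fix $\mu\in\mathbb{M}_{hol}$ with $\int L\,d\mu<+\infty$, let $\hat\mu$ be its Markov extension (notation~\ref{notation:MarkovExtension}), fix $n\ge1$, and write $S_n=\sum_{k=0}^{n-1}t_k$. For $\epsilon>0$, Proposition~\ref{proposition:SupInfFormula} provides $u_\epsilon\in C^0(\Omega)$ with $u_\epsilon(\tau_t(\omega))-u_\epsilon(\omega)\le L(\omega,t)-\bar E+\epsilon$ everywhere; telescoping this along any path with at most $N$ sites yields $\Phi^{\le N}(\omega,t)\ge u_\epsilon(\tau_t(\omega))-u_\epsilon(\omega)-N\epsilon$ — with the truncation the per-site error stays bounded. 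Since $\tau_{S_n}(\omega)=pr(\hat\tau^n(\omega,\underline t))$ and $\hat\mu$ is $\hat\tau$-invariant, $\int[u_\epsilon(\tau_{S_n}(\omega))-u_\epsilon(\omega)]\,d\hat\mu=0$, so $\int\Phi^{\le N}(\omega,S_n)\,d\hat\mu\ge-N\epsilon$ and, letting $\epsilon\to0$, $\int\Phi^{\le N}(\omega,S_n)\,d\hat\mu\ge0$ for every $N$. For $N\ge n$ the particular $n$-site path through $x_k=t_0+\cdots+t_{k-1}$ gives $\Phi^{\le N}(\omega,S_n)\le\sum_{k=0}^{n-1}[\hat L-\bar E]\circ\hat\tau^k(\omega,\underline t)\in L^1(\hat\mu)$ (using $\int L\,d\mu<+\infty$ and $\inf L>-\infty$), so monotone convergence produces $\int\Phi(\omega,S_n)\,d\hat\mu=\lim_N\int\Phi^{\le N}(\omega,S_n)\,d\hat\mu\ge0$, which completes the verification that $\Phi$ is Mather-calibrated.

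For the last assertions I would apply Lemma~\ref{lemma:CalibratedSubCocycle} to $U=\Phi$: for $\mu\in\mathbb{M}_{min}(L)$ with Markov extension $\hat\mu$, $\Phi(\omega,S_n)=\sum_{k=0}^{n-1}[\hat L-\bar L]\circ\hat\tau^k(\omega,\underline t)$ holds $\hat\mu$-a.e. for every $n$. The difference $\Phi(\omega,S_n)-\sum_{k=0}^{n-1}[\hat L-\bar L]\circ\hat\tau^k$ is upper semi-continuous (first part plus continuity of $L$), everywhere $\le0$ (subadditivity of $\Phi$), and null $\hat\mu$-a.e., hence it vanishes on the closed $\hat\tau$-invariant set $\mathrm{supp}(\hat\mu)$; evaluating the resulting identity at $\hat\tau^i(\omega,\underline t)\in\mathrm{supp}(\hat\mu)$ with block length $j-i$ gives, for all $(\omega,\underline t)\in\mathrm{supp}(\hat\mu)$ and $i<j$, $\Phi(\tau_{x_i}(\omega),x_j-x_i)=\sum_{k=i}^{j-1}[L-\bar L]\circ\hat\tau^k(\omega,\underline t)$ with $x_0=0$, $x_{k+1}=x_k+t_k$, that is $S_\omega(x_i,x_j)=E_\omega(x_i,\dots,x_j)-(j-i)\bar E$, the calibration of $\{x_k\}_{k\ge0}$. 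Taking $i=0$, $j=1$ this reads $\Phi(\omega,t_0)=L(\omega,t_0)-\bar L$ on $\mathrm{supp}(\hat\mu)$; pushing forward by the first‑two‑coordinate projection (which sends $\hat\mu$ to $\mu$, hence $\mathrm{supp}(\hat\mu)$ into $\mathrm{supp}(\mu)$) and using that $\{\Phi=L-\bar L\}$ is closed by upper semi-continuity, one obtains $\Phi=L-\bar L$ on $\mathrm{supp}(\mu)$, and therefore on $\text{Mather}(L)=\bigcup_{\mu\in\mathbb{M}_{min}(L)}\mathrm{supp}(\mu)$.

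The one genuinely delicate step is the integral inequality $\int\Phi(\omega,S_n)\,d\hat\mu\ge0$: a direct estimate through an approximate sub-action $u_\epsilon$ fails precisely because the $\epsilon$-error is paid at every site while a Mañé path may have arbitrarily many sites; truncating the number of sites, proving the bound for each fixed $N$ with a fixed sub-action, and then letting $N\to\infty$ by monotone convergence — with the $n$-site path furnishing the integrable majorant — is what circumvents this, and it uses only coerciveness of $L$.
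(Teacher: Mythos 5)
Your proposal is correct, and the crux of it — the integral inequality $\int\Phi\bigl(\omega,\sum_{k=0}^{n-1}t_k\bigr)\,d\hat\mu\geq0$ — is handled by a genuinely different device than the one in the paper. The paper's Part~1 takes approximate sub-actions $u_p$ with error $1/p$, forms the coboundary cocycles $U_p:=u_p\circ\tau-u_p$, sets $U:=\limsup_p U_p$, and shows that this auxiliary measurable cocycle is Mather-calibrated (via the nonnegative quantities $\hat S_{n,p}$ and monotone convergence in $q$), before transferring the integral property to $\Phi$ through the inequality $\Phi\geq U$. You instead truncate the Mañé infimum itself to paths of at most $N$ sites, so that a fixed $\epsilon$-sub-action $u_\epsilon$ telescopes with a total error $N\epsilon$ that is controlled for each fixed $N$; you then let $\epsilon\to0$ with $N$ fixed, and finally let $N\to\infty$ by monotone convergence, the $n$-site path $x_k=t_0+\cdots+t_{k-1}$ furnishing the integrable upper bound. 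Both arguments circumvent the same obstacle (a Mañé path can have arbitrarily many sites, so a single $\epsilon$-sub-action cannot control all paths uniformly), but yours does so by bounding the combinatorics of the competitors rather than by taking a limsup over approximate sub-actions; it is arguably cleaner in that it never introduces the auxiliary cocycle $U$, and it lives entirely inside the definition of $\Phi$. Your upper semi-continuity argument is also simpler than the paper's Part~3: you observe directly that $\Phi$ is a pointwise infimum over a family of jointly continuous functions of $(\omega,t)$ indexed by $n$ and the interior tuple, whence USC is immediate, whereas the paper goes further and proves each $S_n(\omega,t)$ is continuous on bounded sets — information the paper does not actually need for this lemma. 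Your deduction of the calibration on $\mathrm{supp}(\hat\mu)$ via a closed full-measure set and $\hat\tau$-invariance coincides with the paper's Part~4, and your final passage to $\mathrm{supp}(\mu)$ and $\text{Mather}(L)$ through the closed set $\{\Phi=L-\bar L\}$ is sound.
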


\begin{proof}
{\it Part 1.} We first show the existence of a particular measurable Mather-calibrated subadditive cocycle $U(\omega,t)$. From the sup-inf formula (proposition \ref{proposition:SupInfFormula}), for every $p\geq1$, there exists $u_p \in C^0(\Omega)$ such that
\[
\forall \, \omega \in \Omega, \ \forall \, t\in\mathbb{R}^d, \quad u_p \circ \tau_t(\omega) - u_p(\omega) \leq L(\omega,t) - \bar L + 1/p.
\]
Let $U_p(\omega,t) := u_p\circ\tau_t(\omega) - u_p(\omega)$ and $U := \limsup_{p\to+\infty}U_p$. Then $U$ is clearly a subadditive cocycle and satisfies $U(\omega,0)=0$. Besides, $ U $ is finite everywhere, since $ 0 = U(\omega, 0) \le U(\omega, t) + U(\tau_t(\omega), -t) $ and $ U(\omega, t) \le L(\omega, t) - \bar L $. We just verify the last property in definition \ref{definition:CalibratedSubCocycle}. Let $\mu \in \mathbb{M}_{hol}$ be such that $\int\! L \,d\mu < +\infty$. For $n\geq1$, let
\[
\hat S_{n,p}(\omega,\underline t) := \sum_{k=0}^{n-1} \Big[\hat L- \bar L +\frac{1}{p} \Big] \circ \hat\tau^k (\omega,\underline t)  -U_p \Big(\omega, \sum_{k=0}^{n-1} t_k \Big) \geq 0.
\]
Since
\[
U_p \Big(\omega,\sum_{k=0}^{n-1}t_k \Big) = \sum_{k=0}^{n-1} \hat U_p \circ  \hat\tau^k(\omega,\underline{t}), \quad \hat U_p(\omega,\underline{t}) := U_p(\omega,t_0),
\]
by integrating with respect to $\hat\mu$, we obtain
\[
0 \leq  \int\! \inf_{p\geq q} \hat S_{n,p} \, d\hat\mu \leq \inf_{p\geq q}  \int\! \hat S_{n,p}(\omega,\underline t) \, d\hat\mu \leq n\int\! \Big[ L - \bar L + \frac{1}{q} \Big] \,d\mu.
\]
By Lebesgue's monotone convergence theorem, as $q\to+\infty$, we have
\begin{gather*}
\int\! \Big[n(\hat L - \bar L) - U\Big(\omega, \sum_{k=0}^{n-1} t_k \Big) \Big] d\hat\mu \leq  \int\! n[ L - \bar L ] \,d\mu \quad \text{and} \\
\int\! U\Big(\omega, \sum_{k=0}^{n-1} t_k \Big) \,\hat\mu(d\omega,d\underline{t}) \geq0.
\end{gather*}

\noindent{\it Part 2.} We next show that $\Phi$ is Mather-calibrated. We have already noticed that $\Phi$ satisfies the subadditive cocycle property, $\Phi \leq L - \bar L$, $\Phi(\omega,0)\geq0$, and  $\Phi(\omega,t)$ is finite everywhere. Moreover, $\Phi(\omega,t) \geq U(\omega,t)$ and the third property of definition~\ref{definition:CalibratedSubCocycle} follows from part 1.

\medskip
\noindent{\it Part 3.} We show that $\Phi$ is upper semi-continuous. For $n\geq1$, let
\begin{gather*}
S_n(\omega,t) := \inf \{ E_\omega(x_0,\ldots,x_n) : x_0=0, \ x_n=t \}.
\end{gather*}
Then $\Phi = \inf_{n\geq1} (S_n  - n\bar E) $ is upper semi-continuous if we prove that $S_n(\omega,t)$ is continuous on every bounded set
with $\omega \in \Omega$ and $\|t\| \leq D$. Denote $c_0 := \inf_{\omega,x,y}E_\omega(x,y)$ and 
$K := \sup_{\omega \in \Omega, \ \|t\| \leq D} E_\omega(0,\ldots,0,t)$. By coerciveness, there exists $R>0$ such that
\[
\forall \, x,y \in \mathbb{R}^d, \quad\|y-x\| > R \Rightarrow \ \forall \, \omega \in \Omega, \ E_\omega(x,y) > K - (n-1) c_0.
\]
Suppose $\omega,x_0,\ldots,x_n$ are such that $E_\omega(x_0,\ldots,x_n) \leq K$. Suppose by contradiction that $\|x_{k+1}-x_k\| > R$ for some $k\geq0$. Then
\[
K \geq E_\omega(x_0,\ldots,x_n) \geq (n-1)c_0 + E_\omega(x_k,x_{k+1}) > K,
\]
which is impossible. We have proved that the infimum in the definition of $S_n(\omega,t)$, for every $\omega\in\Omega$ and $\|t\| \leq D$, can be realized
by some points $\|x_k\| \leq kR$. By the uniform continuity of $E_\omega(x_0,\ldots,x_n)$ on the product space $\Omega \times \Pi_k \{\|x_k\| \leq kR \}$,
we obtain that $S_n$ is continuous on $\Omega \times  \{ \|t\| \leq D \}$.

\medskip
\noindent{\it Part 4.} Let $\mu$ be a minimizing measure with Markov extension $ \hat\mu $. We show that every $(\omega,\underline{t})$ in the support of $\hat\mu$ is calibrated. Let
\[
\hat \Sigma := \Big\{ (\omega,\underline{t}) \in \Omega \times (\mathbb{R}^d)^{\mathbb{N}} : \forall \, n\geq1, \  \Phi \Big( \omega,\sum_{k=0}^{n-1} t_k \Big) \geq \sum_{k=0}^{n-1} \big[L-\bar L\big] \circ \hat \tau^k(\omega,\underline{t}) \Big\}.
\]
The set $\hat\Sigma$ is closed, since $\Phi$ is upper semi-continuous. By lemma \ref{lemma:CalibratedSubCocycle}, $\hat\Sigma$ has full $\hat\mu$-measure and  therefore contains $\text{\rm supp}(\hat\mu)$. Thanks to the subadditive cocycle property of $\Phi$ and the $\hat\tau$-invariance of
$\text{\rm supp}(\hat\mu)$, we obtain the calibration property
\[
\forall \, (\omega,\underline{t}) \in \hat\Sigma, \ \forall \, 0 \leq i<j, \quad \Phi \Big( \tau_{x_i}(\omega),\sum_{k=i}^{j-1} t_k \Big) = \sum_{k=i}^{j-1} \big[L-\bar L\big] \circ \hat \tau^k(\omega,\underline{t}). \qedhere
\]
\end{proof}

\begin{proof}[Proof of proposition \ref{proposition:MatherSet} -- Item 2.] We now assume that $L$ is superlinear. From lemma~\ref{lemma:CalibratedSubCocycle}, the Ma\~n\'e subadditive cocycle is at most linear. There exists $R>0$ such that
\[
\forall \, \omega \in \Omega, \ \forall \, t \in \mathbb{R}^d, \quad |\Phi(\omega,t)| \leq R(1+\|t\|).
\]
By superlinearity, there exists $B>0$ such that
\[
\forall \, \omega \in \Omega, \ \forall \, t \in \mathbb{R}^d, \quad L(\omega,t) \geq 2R\|t\| -B.
\]
Let $\mu$ be a minimizing measure. Since $\Phi=L-\bar L$  $\mu$ a.e. (lemma \ref{lemma:CalibratedSubCocycle}), we obtain
\[
\|t\| \leq (R+B+|\bar L|)/R, \quad \mu(d\omega,dt) \text{ a.e.}
\]
We have proved that the support of every minimizing measure is compact. In particular, the Mather set is compact.
\end{proof}

\begin{proof}[Proof of theorem \ref{theorem:Main1}.] We show that, for every environment $\omega$ in the projected Mather set, there exists a calibrated configuration for $E_\omega$ passing through the origin. Let $\mu$ be a minimizing measure such that $\text{\rm supp}(\mu)=\text{\rm Mather}(L)$. Let $\hat \mu$ denote its Markov extension. For $n\geq1$, consider
\[
\hat\Omega_n := \Big\{ (\omega,\underline{t}) \in \Omega \times (\mathbb{R}^d)^{\mathbb{N}} : \Phi \Big( \omega,\sum_{k=0}^{2n-1} t_k \Big) \geq \sum_{k=0}^{2n-1} \big[L-\bar L\big] \circ \hat \tau^k(\omega,\underline{t}) \Big\}.
\]
From lemma \ref{lemma:CalibratedManeSubCocycle},  $\text{\rm supp}(\hat\mu) \subseteq \hat\Omega_n$. From the upper semi-continuity of $\Phi$, $\hat\Omega_n$ is closed. To simplify the notations, for every $\underline{t}$, we define a configuration $(x_0,x_1,\ldots)$ by
\[
x_0 = 0, \ x_{k+1} = x_k + t_k \quad\text{so that}\quad
\hat\tau^k(\omega,\underline{t}) = (\tau_{x_k}(\omega),(t_k,t_{k+1}, \ldots)).
\]
Notice that, if $(\omega,\underline{t}) \in \hat\Omega_n$, thanks to the subadditive cocycle property of $\Phi$ and the fact that $ \Phi \le L - \bar L $, the finite configuration $(x_0,\ldots,x_{2n})$ is calibrated in the environment $\omega$, that is,
\[
\forall \, 0 \leq i<j \leq 2n, \quad \Phi \Big( \tau_{x_i}(\omega),\sum_{k=i}^{j-1} t_k \Big) = \sum_{k=i}^{j-1} \big[L-\bar L\big] \circ \hat \tau^k(\omega,\underline{t}),
\]
or written using the family of interaction energies $E_\omega$,
\[
\forall \, 0 \leq i<j \leq 2n, \quad S_\omega(x_i,x_j) = E_\omega(x_i,\ldots,x_j) -(j-i)\bar E.
\]
Thanks to the sublinearity of $S_\omega$, there exists a constant $R>0$ such that, uniformly in $\omega \in \Omega $ and $x, y \in \mathbb R^d$, we have $|S_\omega(x,y)| \leq R(1+\|y-x\|)$. Besides, thanks to the superlinearity of $E_\omega$, there exists a constant $B>0$ such that $E_\omega(x,y) \geq 2R \|y-x\| - B$. Since $S_\omega(x_k,x_{k+1}) = E_\omega(x_k,x_{k+1}) - \bar E$, we thus obtain a uniform upper bound $D:=(R+B+|\bar E|)/R$ on the jumps of calibrated configurations:
\[
\forall \, (\omega,\underline{t}) \in \hat\Omega_n, \ \forall \, 0 \leq k < 2n, \quad \|x_{k+1} - x_k \| \leq D.
\]
Let $\hat\Omega'_n = \hat\tau^n(\hat\Omega_n)$. Thanks to the uniform bounds on the jumps, $\hat\Omega'_n$ is again closed.  Since $\hat\mu(\hat\Omega_n)=1$, $\hat\mu(\hat\Omega'_n)=1$ by invariance of $\hat \tau$. Let $\nu := pr_*(\mu)$ be the projected measure on $\Omega$. Then $\text{\rm supp}(\nu)=pr(\text{\rm Mather}(L))$. By the definition of $\hat\Omega'_n$, we have
\begin{multline*}
\hat{pr}(\hat\Omega'_n) = \{ \omega \in \Omega : \exists \, (x_{-n},\ldots,x_n)\in\mathbb{R}^d \ \text{ s.t. } \ x_0=0 \ \text{ and}
\\ S_\omega(x_{-n},x_{n}) \geq E_\omega(x_{-n},\ldots,x_n) -2n\bar E \}.
\end{multline*}
Again by compactness of the jumps, $\hat{pr}(\hat\Omega'_n)$ is closed and has full $\nu$-measure. Thus, $\hat{pr}(\hat\Omega'_n) \supseteq pr(\text{\rm Mather}(L))$. By a diagonal extraction procedure, we obtain, for every $\omega \in \text{\rm Mather}(L)$, a bi-infinite calibrated configuration with uniformly bounded jumps passing through the origin.
\end{proof}

\subsection{Properties of one-dimensional calibrated configurations}\label{sec:propcalconf}

Perhaps the most powerful assumption made in one-dimensional Aubry theory \cite{AubryLeDaeron1983_01} is the twist property. It will not be used here in the infinitesimal form. Supposing that $ (\Omega, \{\tau_t\}_{t \in \mathbb R}, L) $ is weakly twist (definition~\ref{definition:quasicrystal_WeaklyTwistProperty}), we discuss in this section key properties on the ordering of minimizing configurations and therefore of calibrated configurations. The fundamental Aubry crossing property is explained in lemma~\ref{lemma:twistProperty}.
We collect in lemmas~\ref{lemma:twistMonotony} and \ref{lemma:StrictlyMonotone} 
intermediate results, that are consequences of the weakly twist property, about the order of the points composing a minimizing configuration.
Such results will be applied in the proof of theorem~\ref{theorem:MainContribution} in section~\ref{sec:conf-cal}.

The following lemma is an easy consequence of the definition. It shows that the energy of a configuration can be lower by exchanging  the positions.

\begin{lemma}[\bf Aubry crossing lemma]
\label{lemma:twistProperty}
If $L$ is weakly twist, then, for every $\omega \in \Omega$, for every $x_0,x_1,y_0,y_1 \in \mathbb{R}$ satisfying $(y_0-x_0)(y_1-x_1) < 0$,
\[
\big[ E_\omega(x_0,x_1)+E_\omega(y_0,y_1) \big]  - \big[ E_\omega(x_0,y_1)+E_\omega(y_0,x_1) \big] = \alpha(y_0-x_0)(y_1-x_1) >0,
\]
with $ \alpha = \frac{1}{(y_0-x_0)(y_1-x_1)} \int_{x_0}^{y_0} \int_{x_1}^{y_1} \frac{\partial^2   E_\omega}{\partial x\partial y} (x,y) \, dy dx < 0 $.
\end{lemma}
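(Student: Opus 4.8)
The plan is to reduce the displayed identity to two applications of the fundamental theorem of calculus — legitimate because each $E_\omega$ is $C^2$ under the weak twist assumption — and then read off the sign from the a.e.\ negativity of the mixed partial. Fix $\omega$ and denote by $\Delta$ the left-hand side $\big[E_\omega(x_0,x_1)+E_\omega(y_0,y_1)\big]-\big[E_\omega(x_0,y_1)+E_\omega(y_0,x_1)\big]$. First I would introduce the $C^1$ function $h(x):=E_\omega(x,x_1)-E_\omega(x,y_1)$ and note that $\Delta=h(x_0)-h(y_0)=-\int_{x_0}^{y_0}h'(x)\,dx$. Since
\[
h'(x)=\frac{\partial E_\omega}{\partial x}(x,x_1)-\frac{\partial E_\omega}{\partial x}(x,y_1)=-\int_{x_1}^{y_1}\frac{\partial^2 E_\omega}{\partial x\partial y}(x,y)\,dy,
\]
this yields $\Delta=\int_{x_0}^{y_0}\int_{x_1}^{y_1}\frac{\partial^2 E_\omega}{\partial x\partial y}(x,y)\,dy\,dx$, and dividing and multiplying by the factor $(y_0-x_0)(y_1-x_1)$, which is nonzero by hypothesis, gives exactly $\Delta=\alpha(y_0-x_0)(y_1-x_1)$ with $\alpha$ as in the statement.

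It remains to establish the strict sign. Since $\Delta$, the factor $(y_0-x_0)(y_1-x_1)$ and $\alpha$ are all invariant under exchanging the pair $(x_0,x_1)$ with $(y_0,y_1)$, I may assume $x_0<y_0$; the hypothesis $(y_0-x_0)(y_1-x_1)<0$ then forces $y_1<x_1$. In that case
\[
\Delta=-\int_{x_0}^{y_0}\int_{y_1}^{x_1}\frac{\partial^2 E_\omega}{\partial x\partial y}(x,y)\,dy\,dx,
\]
an integral over the nondegenerate rectangle $[x_0,y_0]\times[y_1,x_1]$. As argued below, this integral is strictly negative, so $\Delta>0$ and hence $\alpha=\Delta/\big((y_0-x_0)(y_1-x_1)\big)<0$, which is the same as saying $\alpha(y_0-x_0)(y_1-x_1)=\Delta>0$.

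The one step that demands a little care — the point I would flag as the main, though minor, obstacle — is the strict negativity of the integral of $\frac{\partial^2 E_\omega}{\partial x\partial y}$ over a nondegenerate rectangle, which must be extracted from the pointwise a.e.\ negativity in the definition of \emph{weakly twist}. Since $E_\omega$ is $C^2$ the mixed partial is continuous, hence (being $<0$ a.e.) $\le 0$ everywhere; were its integral over some nondegenerate rectangle equal to zero, it would vanish identically there, and therefore vanish on a horizontal segment of positive length, contradicting that $\frac{\partial^2 E_\omega}{\partial x\partial y}(\cdot,y)<0$ a.e.\ for that value of $y$. Everything else is routine bookkeeping with the fundamental theorem of calculus.
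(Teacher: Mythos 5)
Your proof is correct and follows the same approach as the paper's one-line argument, namely obtaining the identity by integrating $\frac{\partial^2 E_\omega}{\partial x\partial y}$ over the rectangle spanned by $(x_0,x_1)$ and $(y_0,y_1)$ via the fundamental theorem of calculus. You also usefully spell out the point the paper leaves implicit — that continuity of the mixed partial, combined with its a.e.\ negativity along each horizontal and vertical line, forces the integral over any nondegenerate rectangle to be strictly negative.
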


\begin{proof}
The inequality is obtained by integrating the function $\frac{\partial^2}{\partial x \partial y}  E_\omega$ on the domain
$ [\min(x_0,y_0),\max(x_0,y_0)]\times[\min(x_1,y_1),\max(x_1,y_1)] $.
\end{proof}

The first consequence of Aubry crossing lemma is that minimizing configurations shall be strictly ordered. We begin by an intermediate lemma.

\begin{lemma}
\label{lemma:twistMonotony}
Let $L$ be a weakly twist Lagrangian, $\omega \in \Omega$, $n\geq2$, and $x_0,\ldots,x_n \in \mathbb{R}$ be a nonmonotone sequence
(that is, a sequence which does not satisfy $x_0\leq\ldots\leq x_n$ nor $x_0\geq\ldots\geq x_n$).

-- If $x_0=x_n$, then $E_\omega(x_0,\ldots,x_n) > \sum_{i=0}^{n-1} E_\omega(x_i,x_i)$.

-- If $x_0\not= x_n$, then there exists a subset $\{i_0,i_1,\ldots,i_r\}$ of $\{0,\ldots,n\}$, with $i_0=0$ and $i_r=n$, such that $(x_{i_0},x_{i_1},\ldots,x_{i_r})$ is strictly monotone and
\[
E_\omega(x_0,\ldots,x_n) > E_\omega(x_{i_0},\ldots,x_{i_r}) + \sum_{ i\not\in\{i_0,\ldots,i_r\}} E_\omega(x_i,x_i).
\]
(Notice that it may happen that $x_i = x_j$ for  $i\not\in \{i_0,\ldots,i_r\}$ and $j \in \{i_0,\ldots,i_r\}$.)

\end{lemma}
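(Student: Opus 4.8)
The plan is to prove both assertions by induction on $n$, deleting one point of the configuration at a time and bookkeeping the energy through the Aubry crossing lemma~\ref{lemma:twistProperty}. It is convenient to establish the second assertion in a slightly stronger form that also covers monotone sequences: \emph{for every $n\ge1$ and every $x_0,\ldots,x_n\in\mathbb{R}$ with $x_0\ne x_n$ there is a subset $\{i_0<\ldots<i_r\}$ of $\{0,\ldots,n\}$, with $i_0=0$ and $i_r=n$, such that $(x_{i_0},\ldots,x_{i_r})$ is strictly monotone and $E_\omega(x_0,\ldots,x_n)\ge E_\omega(x_{i_0},\ldots,x_{i_r})+\sum_{i\notin\{i_0,\ldots,i_r\}}E_\omega(x_i,x_i)$; the inequality is strict for a suitable such subset as soon as $(x_0,\ldots,x_n)$ is nonmonotone.} The first assertion will be a parallel induction whose base is $n=2$, $x=(a,b,a)$ with $b\ne a$: here lemma~\ref{lemma:twistProperty} applied to the bonds $(a,b)$ and $(b,a)$ --- for which $(b-a)(a-b)=-(b-a)^2<0$ --- gives $E_\omega(a,b)+E_\omega(b,a)>E_\omega(a,a)+E_\omega(b,b)$, which is exactly what is claimed.

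Two elementary reductions feed the inductions. First, \emph{a plateau is removed for free}: if $x_{k-1}=x_k$ for some $k$, I delete whichever of $k-1,k$ is an interior index (one of them is, since $n\ge2$), obtaining a shorter sequence $y$ with the same endpoints and, writing $j$ for the deleted index, $E_\omega(x_0,\ldots,x_n)=E_\omega(y)+E_\omega(x_j,x_j)$, since $E_\omega(x_{k-1},x_k)+E_\omega(x_k,x_{k+1})=E_\omega(x_k,x_k)+E_\omega(x_{k-1},x_{k+1})$ (and its endpoint variants). Crucially, $y$ is nonmonotone if and only if $x$ is, because inserting or deleting a repeated entry of a finite sequence changes neither the property ``non-decreasing'' nor the property ``non-increasing''. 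Second, \emph{a strict local extremum is removed at a cost}: if $x$ has no plateau then all increments $x_k-x_{k-1}$ are nonzero, and $x$ fails to be strictly monotone --- in particular it is nonmonotone --- precisely when two consecutive increments have opposite sign, i.e. $(x_k-x_{k-1})(x_{k+1}-x_k)<0$ for some $k\in\{1,\ldots,n-1\}$; applying lemma~\ref{lemma:twistProperty} to the bonds $(x_{k-1},x_k)$ and $(x_k,x_{k+1})$ yields $E_\omega(x_{k-1},x_k)+E_\omega(x_k,x_{k+1})>E_\omega(x_{k-1},x_{k+1})+E_\omega(x_k,x_k)$, hence the strict inequality $E_\omega(x_0,\ldots,x_n)>E_\omega(y)+E_\omega(x_k,x_k)$ for the sequence $y$ obtained by deleting $x_k$.

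The induction then runs as follows. If $x$ is strictly monotone the second assertion is immediate (take $\{i_0,\ldots,i_r\}=\{0,\ldots,n\}$). If $x$ has a plateau, I remove it and invoke the inductive hypothesis for the shorter sequence $y$ (which has the same endpoints and is nonmonotone iff $x$ is), then reassemble: the diagonal term dropped by the plateau reduction joins those produced for $y$, and a strictly monotone subsequence selected for $y$ lifts to one for $x$ through $0$ and $n$. If $x$ has no plateau and is not strictly monotone --- hence is nonmonotone --- I remove a strict local extremum $x_k$; the resulting $y$ has the same endpoints, so for the second assertion the inductive hypothesis applies directly (strictness being already supplied by the crossing step), while for the first assertion $y$ is again nonmonotone, because otherwise $y$, having equal endpoints, would be constant, forcing $x_0=x_1$ against the no-plateau assumption. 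Applying the inductive hypothesis to $y$ and combining closes both cases.

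The hard part is not any single estimate but the bookkeeping, and this is where I would be most careful: one must check that each deleted index, be it a plateau point or a local extremum, ends up \emph{outside} the final subset and contributes exactly one term $E_\omega(x_i,x_i)$; that the strictly monotone subsequence chosen for the reduced sequence lifts to a strictly monotone subsequence of $x$ that still begins at $0$ and ends at $n$; and that, in the first assertion, the $n$ diagonal terms accumulated along the recursion recombine exactly into $\sum_{i=0}^{n-1}E_\omega(x_i,x_i)$. The single conceptual point underneath this is the need to guarantee that \emph{at least one} strict (local-extremum) reduction takes place before the recursion terminates; this is exactly where the hypothesis that $x$ is nonmonotone --- rather than merely ``not strictly monotone'' --- is used, since for a plain monotone sequence only free plateau reductions are available and no strict inequality could be produced.
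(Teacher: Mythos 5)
Your proof is correct, and it takes a genuinely different route from the paper. The paper peels off the last point $x_{n+1}$, applies the inductive hypothesis to $(x_0,\ldots,x_n)$, and then ``slides'' $x_{n+1}$ into the previously selected strictly monotone subsequence via repeated applications of lemma~\ref{lemma:twistProperty}, which requires a fairly involved case analysis on the position of $x_{n+1}$ relative to the chosen indices $x_{i_0},\ldots,x_{i_r}$. You instead recurse by deleting an \emph{interior} index at each step, distinguishing two local situations: a repeated neighbor (a plateau), whose removal is energetically exact and hence free; and a strict local extremum (necessarily present when there is no plateau and the sequence is not strictly monotone), whose removal costs a single strict Aubry crossing. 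Both approaches must quietly strengthen the second assertion to cover monotone but non-strict sequences with $\geq$; you make that explicit, which is cleaner. Your termination argument for strictness in assertion~2 --- plateau removal preserves nonmonotonicity, so starting from a nonmonotone sequence a strict extremum step must eventually fire --- is correct and is the key structural point. One small imprecision: when you argue for assertion~1 that deleting a strict local extremum at index $k$ leaves a nonmonotone sequence, you say that otherwise $y$ constant would force $x_0=x_1$; this only works for $k\geq 2$. For $k=1$, constancy of $y$ instead gives $x_2=x_3$ (still a plateau, contradiction) when $n\geq 3$, and for $n=2$ you are already at the base case. The conclusion is therefore unaffected, but the justification as written is slightly incomplete.
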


\begin{proof}

We prove the lemma by induction.

Let $x_0,x_1,x_2\in\mathbb{R}$ be a nonmonotone sequence. If $x_0=x_2$, then $E_\omega(x_0,x_1,x_2) > E(x_0,x_0)+E_\omega(x_1,x_1)$. If $x_0 \not= x_2$ then $x_0, x_1, x_2$ are three distinct points.
Thus, $x_0<x_1$ implies $x_2<x_1$ and $x_1<x_0$ implies $x_1<x_2$. In both cases, lemma~\ref{lemma:twistProperty} tells us that
\begin{gather*}
E_\omega(x_0,x_1)+E_\omega(x_1,x_2) > E_\omega(x_0,x_2)+E_\omega(x_1,x_1).
\end{gather*}

Let $(x_0,\ldots,x_{n+1})$ be a nonmonotone sequence. We have two cases: either $x_0\leq x_n$ or $x_0\geq x_n$.
We shall only give the proof for the case $x_0 \leq x_n$.

{\it Case $x_0=x_n$.} Then $(x_0,\ldots,x_n)$ is nonmonotone and by induction
\begin{align*}
E_\omega(x_0,\ldots,x_{n+1}) &> E_\omega(x_n,x_{n+1}) + \sum_{i=0}^{n-1} E_\omega(x_i,x_i) \\
&= E_\omega(x_0,x_{n+1}) + \sum_{i=1}^{n} E_\omega(x_i,x_i).
\end{align*}
The conclusion holds whether $x_{n+1}=x_0$ or not.

{\it Case $x_0 < x_n$.} Whether $(x_0,\ldots,x_n)$ is monotone or not, we may  choose  a subset of indices $\{i_0,\ldots,i_r\}$ such that $i_0=0$, $i_r=n$, $x_{i_0} < x_{i_1} < \ldots < x_{i_r}$ and
\[
E_\omega(x_0,\ldots,x_{n+1}) \geq \Big(E_\omega(x_{i_0},\ldots,x_{i_r}) + \sum_{i \not\in \{i_0,\ldots,i_r\}} E_\omega(x_i,x_i) \Big) + E_\omega(x_n,x_{n+1}).
\]

If $x_n\leq x_{n+1}$, then $(x_0,\ldots,x_n)$ is necessarily nonmonotone and the previous inequality is strict.  If $x_n=x_{n+1},$ the lemma is proved by modifying $i_r=n+1$. If $x_n < x_{n+1}$, the lemma is proved by choosing $r+1$ indices and  $i_{r+1}=n+1$.

If $x_{n+1}< x_n=x_{i_r}$, by applying lemma \ref{lemma:twistProperty}, one obtains
\begin{gather*}
E_\omega(x_{i_{r-1}},x_{i_r}) + E_\omega(x_n,x_{n+1}) > E_\omega(x_n,x_{i_r}) + E_\omega(x_{i_{r-1}},x_{n+1}), \\
E_\omega(x_0,\ldots,x_{n+1}) > E_\omega(x_{i_0},\ldots,x_{i_{r-1}},x_{n+1}) + \big[ \sum_{i \not\in \{i_0,\ldots,i_r\}} E_\omega(x_i,x_i) \big] + E_\omega(x_n,x_n).
\end{gather*}
\noindent If $x_{i_{r-1}} < x_{n+1}$, the lemma is proved by changing $i_r=n$ to $i_r = n+1$. If $x_{i_{r-1}}=x_{n+1}$, the lemma is proved by choosing $r-1$ indices and $i_{r-1} = n+1$. If $x_{n+1} < x_{i_{r-1}}$, we apply again lemma \ref{lemma:twistProperty} until there exists a largest $s\in\{0,\ldots,r\}$  such that $x_s < x_{n+1}$ or $x_{n+1} \leq x_0$. In the former case, the lemma is proved by choosing $s+1$ indices and by modifying $i_{s+1}=n+1$.
In the latter case, namely, when $x_{n+1} \leq x_0 < x_n$, we have
\[
E_\omega(x_0,\ldots,x_{n+1}) > E_\omega(x_0,x_{n+1}) + \sum_{i=1}^n E_\omega(x_i,x_i)
\]
\noindent and the lemma is proved whether $x_{n+1}=x_0$ or $x_{n+1}<x_0$.
\end{proof}

The Ma\~n\'e subadditive cocycle $\Phi(\omega,t)$ (definition \ref{definition:ManeSubCocycle2}) is obtained by minimizing a normalized energy $E_\omega(x_0,\ldots,x_n)- n\bar E$ on all the configurations satisfying $x_0=0$ and $x_n=t$. The following lemma shows that it is enough
to minimize on strictly monotone configurations (unless $t=0$).

\begin{corollary}
\label{corollary:twistMonotony}
If $L$ is weakly twist, then, for every $\omega\in\Omega$, the Ma\~n\'e subadditive cocycle $\Phi(\omega,t)$ satisfies:
\par-- if $t=0$, $\Phi( \omega,0)=E_\omega(0,0)-\bar E$,
\par-- if $t>0$, $\Phi(\omega,t)=\inf_{n\geq1} \inf_{0=x_0<x_1<\ldots<x_n=t} [E_\omega(x_0,\ldots,x_n)-n\bar E]$,
\par-- if $t<0$, $\Phi(\omega,t)=\inf_{n\geq1} \inf_{0=x_0>x_1>\ldots>x_n=t} [E_\omega(x_0,\ldots,x_n)-n\bar E]$.
\end{corollary}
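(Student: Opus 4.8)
The plan is to deduce Corollary~\ref{corollary:twistMonotony} directly from Lemma~\ref{lemma:twistMonotony} by showing that, in the infimum defining $\Phi(\omega,t)$, nonmonotone competitors can always be replaced by monotone ones of no greater normalized energy, and that the number of sites can be adjusted to keep the normalization $-n\bar E$ under control. The key observation is that $\Phi(\omega,t)$ is an infimum over all $n\geq 1$ and all configurations $0=x_0,\ldots,x_n=t$ of $E_\omega(x_0,\ldots,x_n)-n\bar E$; if a configuration $(x_0,\ldots,x_n)$ is nonmonotone, Lemma~\ref{lemma:twistMonotony} produces a strictly monotone subconfiguration $(x_{i_0},\ldots,x_{i_r})$ (with $i_0=0$, $i_r=n$) together with self-interaction terms $E_\omega(x_i,x_i)$ for the discarded indices, strictly lowering the energy. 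Since each discarded index contributes a self-loop term $E_\omega(x_i,x_i)\geq\bar E$ (because $\bar E\leq\inf_\omega\inf_x E_\omega(x,x)$ by the \emph{a priori} bounds), one has
\[
E_\omega(x_0,\ldots,x_n)-n\bar E > E_\omega(x_{i_0},\ldots,x_{i_r})+\sum_{i\notin\{i_0,\ldots,i_r\}}E_\omega(x_i,x_i)-n\bar E \geq E_\omega(x_{i_0},\ldots,x_{i_r})-r\bar E,
\]
using that the number of discarded indices is $n-r$ and each discarded self-loop is at least $\bar E$. Hence the strictly monotone configuration with $r$ steps has strictly smaller normalized energy, so the infimum over all configurations equals the infimum over strictly monotone ones.

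First I would dispose of the case $t=0$: any competing configuration $0=x_0,\ldots,x_n=0$ with $n\geq 2$ is either constant (all $x_i=0$), in which case $E_\omega(x_0,\ldots,x_n)-n\bar E = n(E_\omega(0,0)-\bar E)\geq E_\omega(0,0)-\bar E$ since $E_\omega(0,0)-\bar E\geq 0$, or nonmonotone, in which case the first bullet of Lemma~\ref{lemma:twistMonotony} gives $E_\omega(x_0,\ldots,x_n)>\sum_{i=0}^{n-1}E_\omega(x_i,x_i)\geq n\bar E$, so again the normalized energy exceeds $E_\omega(0,0)-\bar E\geq 0$. The remaining possibility, a monotone nonconstant loop with $x_0=x_n=0$, cannot occur. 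Thus the $n=1$ term $E_\omega(0,0)-\bar E$ realizes the infimum, giving $\Phi(\omega,0)=E_\omega(0,0)-\bar E$.

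Next I would treat $t>0$ (the case $t<0$ being symmetric, obtained by reflection $x\mapsto -x$, which preserves the weakly twist property and the form of $\Phi$). Fix any competitor $0=x_0,\ldots,x_n=t$. Since $x_0=0\neq t=x_n$, we are in the second case of Lemma~\ref{lemma:twistMonotony} whenever the sequence is nonmonotone: we extract a strictly monotone subconfiguration $(x_{i_0},\ldots,x_{i_r})$ with $i_0=0$, $i_r=n$, hence $x_{i_0}=0<x_{i_1}<\cdots<x_{i_r}=t$, and the displayed strict inequality together with $E_\omega(x_i,x_i)\geq\bar E$ shows $E_\omega(x_{i_0},\ldots,x_{i_r})-r\bar E< E_\omega(x_0,\ldots,x_n)-n\bar E$. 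If the original sequence is already monotone but not strictly so, one simply deletes repeated values, again lowering or keeping the number of steps and not increasing the energy (consecutive equal points contribute $E_\omega(x_i,x_i)\geq\bar E$, which can be removed at no cost to the normalized energy); since $x_0<x_n$ the deletion cannot collapse everything. In all cases the infimum over strictly increasing configurations is no larger than the general infimum, and it is trivially no smaller, so the two coincide.

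The main obstacle, and the point requiring the most care, is bookkeeping the normalization $-n\bar E$ under the combinatorial surgery of Lemma~\ref{lemma:twistMonotony}: replacing $n$ steps by $r<n$ steps removes $n-r$ factors of $\bar E$, and one must verify that the $n-r$ discarded self-interaction terms $E_\omega(x_i,x_i)$ more than compensate — which is exactly where the \emph{a priori} bound $\bar E\leq\inf_{\omega\in\Omega}\inf_{x\in\mathbb{R}^d}E_\omega(x,x)$ is essential. A secondary subtlety is the parenthetical warning in Lemma~\ref{lemma:twistMonotony} that a discarded $x_i$ may coincide with a retained $x_j$; this is harmless here because we only need the inequality on \emph{energies}, not disjointness of point sets, and the self-loop terms are handled purely through the lower bound $E_\omega(x_i,x_i)\geq\bar E$. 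No compactness or continuity input is needed — the corollary is a purely algebraic consequence of the crossing lemma and the definition of $\bar E$.
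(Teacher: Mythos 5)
Your proof is correct and takes essentially the same route as the paper's: apply Lemma~\ref{lemma:twistMonotony} to extract a strictly monotone subconfiguration plus self-loop terms, then absorb the discarded $E_\omega(x_i,x_i)$ terms via the \emph{a priori} bound $\bar E\leq\inf_x E_\omega(x,x)$. You handle the monotone-but-not-strictly-monotone and constant-loop edge cases a bit more explicitly than the paper does, but the underlying argument is identical.
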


\begin{proof}
Lemma \ref{lemma:twistMonotony} tells us that we can minimize the energy of  $E_\omega(x_0,\ldots,x_n)-n\bar E$ by the sum of two terms:

\noindent -- either $x_n=x_0$, then
\[
E_\omega(x_0,\ldots,x_n)-n\bar E  \geq \big [E_\omega(x_0,x_0)-\bar E\big] + \sum_{i\notin\{0,n\}} \big[E_\omega(x_i,x_i)-\bar E \big];
\]
-- or $x_n\not= x_0$, then for some $(x_{i_0},\ldots,x_{i_r})$  strictly monotone, with $ i_0 = 0 $ and $ i_r = n $,
\[
E_\omega(x_0,\ldots,x_n)-n\bar E  \geq \big[E_\omega(x_{i_0},\ldots,x_{i_r})-r\bar E\big] + \sum_{i\not\in\{i_0,\ldots,i_r\}} \big[E_\omega(x_i,x_i)-\bar E \big].
\]
We conclude the proof by noticing that $\bar E \leq \inf_{x\in\mathbb{R}}E_\omega(x,x)$.
\end{proof}

We recall that a finite configuration $(x_0,x_1,\ldots,x_n)$ is said to be minimizing in the environment $\omega$ if $E_\omega(x_0,x_1,\ldots,x_n) \leq E_\omega(y_0,y_1,\ldots,y_n)$ whenever $x_0=y_0$ and $x_n=y_n$. The following lemmas show that, under certain conditions, a minimizing configuration is
strictly monotone.

\begin{lemma}
\label{lemma:StrictlyMonotone}
 Suppose that $L$ is weakly twist. Then for every $\omega\in\Omega$, if $(x_0,\ldots,x_n)$ is a minimizing configuration for $E_\omega$,
 with $x_0\not= x_n$, such that $x_i$ is strictly between $x_0$ and $x_n$ for every $0<i<n-1$, then $(x_0,\ldots,x_n)$ is strictly monotone.
\end{lemma}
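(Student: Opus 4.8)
The plan is to argue by contradiction. After possibly reversing the configuration and interchanging the two arguments of $E_\omega$ (an operation that preserves both the weakly twist hypothesis and the minimizing property), I may assume $x_0<x_n$; the betweenness hypothesis then forces $x_0=\min_k x_k$, $x_n=\max_k x_k$, and no interior point to coincide with an endpoint. Assume $(x_0,\dots,x_n)$ is not strictly increasing and distinguish two cases.

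\emph{It is not even non-decreasing.} Then, since $x_0<x_n$, lemma~\ref{lemma:twistMonotony} provides a strictly increasing subsequence $(x_{i_0},\dots,x_{i_r})$ with $i_0=0$, $i_r=n$, and
\[
E_\omega(x_0,\dots,x_n) > E_\omega(x_{i_0},\dots,x_{i_r}) + \sum_{i\notin I}E_\omega(x_i,x_i),\qquad I:=\{i_0,\dots,i_r\}.
\]
The heart of the argument is to convert the right-hand side into a legitimate competitor with the same endpoints and the same number $n+1$ of sites. I would do this by re-inserting the dropped points $x_i$, $i\notin I$, one at a time into $(x_{i_0},\dots,x_{i_r})$, each at its place in the increasing order; every newly inserted point then stays strictly inside $(x_0,x_n)$ precisely because each $x_i$ already lies there, so the endpoints never move. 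When a point $p$ is inserted between two consecutive current entries $a\le p\le b$, the term $E_\omega(a,b)$ is replaced by $E_\omega(a,p)+E_\omega(p,b)$, and integrating $\partial^2E_\omega/\partial x\partial y$ over $[a,p]\times[p,b]$ exactly as in lemma~\ref{lemma:twistProperty} gives
\[
E_\omega(a,p)+E_\omega(p,b)-E_\omega(a,b)-E_\omega(p,p)=\int_a^p\!\!\int_p^b\frac{\partial^2E_\omega}{\partial x\partial y}(x,y)\,dy\,dx\le 0 .
\]
Summing these estimates over the successive insertions yields a configuration $(y_0,\dots,y_n)$ with $y_0=x_0$, $y_n=x_n$ and $E_\omega(y_0,\dots,y_n)\le E_\omega(x_{i_0},\dots,x_{i_r})+\sum_{i\notin I}E_\omega(x_i,x_i)<E_\omega(x_0,\dots,x_n)$, contradicting minimality. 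Hence $(x_0,\dots,x_n)$ is non-decreasing.

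\emph{It is non-decreasing but not injective.} Then some maximal block of equal consecutive values $x_a=x_{a+1}=\dots=x_b=:v$ has length at least $2$, and by the betweenness hypothesis it avoids both endpoints, so $1\le a$, $b\le n-1$, $x_{a-1}<v$ and $x_{b+1}>v$. Here I would use the first-order conditions of minimality: for $0<k<n$ the point $x_k$ minimizes $z\mapsto E_\omega(x_{k-1},z)+E_\omega(z,x_{k+1})$, so $\frac{\partial E_\omega}{\partial y}(x_{k-1},x_k)+\frac{\partial E_\omega}{\partial x}(x_k,x_{k+1})=0$. Taking $k=a$ and using $x_{a-1}<v$ together with the strict decrease of $z\mapsto\frac{\partial E_\omega}{\partial y}(z,v)$ (a consequence of the weakly twist property) gives that the derivative of $t\mapsto E_\omega(t,t)$ at $v$, namely $\frac{\partial E_\omega}{\partial x}(v,v)+\frac{\partial E_\omega}{\partial y}(v,v)$, is $<0$. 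Taking $k=a+1$ gives the opposite sign: this derivative equals $0$ if $b>a+1$ (then $x_{a+2}=v$), and is $>0$ if $b=a+1$ (then $x_{a+2}=x_{b+1}>v$ and one uses the strict decrease of $y\mapsto\frac{\partial E_\omega}{\partial x}(v,y)$). This contradiction shows that no such block exists, so $(x_0,\dots,x_n)$ is strictly increasing.

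The step I expect to be the main obstacle is the re-insertion/telescoping procedure of the first case: one must check carefully that the reinserted points stay inside $[x_0,x_n]$ so that the endpoints of the competitor remain $x_0$ and $x_n$, and that the elementary two-point inequality above adds up correctly along the successive insertions — it is only then that the strict inequality furnished by lemma~\ref{lemma:twistMonotony} forces the strict drop in energy needed to contradict minimality.
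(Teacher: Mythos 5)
Your proof is correct, and it follows the paper's two-step outline (first show the configuration is monotone, then show it is strictly monotone), but the execution differs in each step and is worth comparing. For the monotone step the paper, after invoking lemma~\ref{lemma:twistMonotony}, argues by maximality of the cardinality $r$ of the extracted strictly monotone subset and lets Aubry crossing interact with minimality to push $r$ up to $n$; you build the competitor directly by re-inserting the dropped values one at a time into their ordered slots, controlled by the elementary identity $E_\omega(a,p)+E_\omega(p,b)-E_\omega(a,b)-E_\omega(p,p)=\int_a^p\int_p^b\frac{\partial^2E_\omega}{\partial x\partial y}\,dy\,dx\le 0$. The two mechanisms are essentially the same; yours is more explicit and makes visible exactly where the betweenness hypothesis is used (the re-inserted points never displace the endpoints of the competitor). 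For the strict step the routes are genuinely different: the paper perturbs the flat block by $\pm\epsilon$ and balances the first-order drop furnished by Aubry crossing against the $o(\epsilon^2)$ estimate valid at a minimizer, whereas you write the first-order Euler--Lagrange equations at $k=a$ and $k=a+1$ of the maximal flat block and, using the strict monotonicity of $z\mapsto\frac{\partial E_\omega}{\partial y}(z,v)$ and $y\mapsto\frac{\partial E_\omega}{\partial x}(v,y)$ guaranteed by the weakly twist property, conclude that $\frac{\partial E_\omega}{\partial x}(v,v)+\frac{\partial E_\omega}{\partial y}(v,v)$ would have to be simultaneously $<0$ and $\ge 0$. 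Your derivative argument uses only first-order stationarity and is shorter. One caveat, shared by both proofs: the betweenness hypothesis must be read as covering all interior indices $0<i<n$ (the paper's ``$0<i<n-1$'' is surely a slip, as the application in lemma~\ref{lemma:StrictlyMonotoneBis} requires every interior point to lie strictly between the endpoints, and otherwise the statement is simply false already for $E_\omega(x,y)=\tfrac12(y-x-1)^2$).
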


\begin{proof}
Let $(x_0,\ldots,x_n)$ be such a minimizing sequence. We show, in part 1, it is monotone, and, in part 2, it is strictly monotone.

{\it Part 1.} Assume by contradiction that  $(x_0,\ldots,x_n)$ is not monotone. According to lemma \ref{lemma:twistMonotony}, one can find a subset of indices $\{i_0,\ldots,i_r\}$ of $\{0,\ldots,n\}$, with $i_0=0$ and $i_r=n$, such that  $(x_{i_0},\ldots,x_{i_r})$ is strictly monotone and
\[
E_\omega(x_0,\ldots,x_n) > E_\omega(x_{i_0},\ldots,x_{i_r}) + \sum_{i\not\in \{i_0,\ldots,i_r\}} E_\omega(x_i,x_i).
\]
We choose  the largest integer $r$ with the above property. Since $(x_0,\ldots,x_n)$ is not monotone, we have necessarily $r<n$. Since $(x_0,\ldots,x_n)$ is minimizing, one can find  $i\not\in\{i_0,\ldots,i_r\}$ such that $x_i \not\in\{x_{i_0},\ldots,x_{i_r}\}$. Let $s$ be one of the indices of $\{0,\ldots,r\}$ such that $x_i$ is between $x_{i_s}$ and $x_{i_{s+1}}$. Then, by lemma~\ref{lemma:twistProperty},
\[
E_\omega(x_{i_s},x_{i_{s+1}})+E_\omega(x_i,x_i) > E_\omega(x_{i_s},x_i)+E_\omega(x_i,x_{i_{s+1}}).
\]
We have just contradicted the maximality of $r$. Therefore, $(x_0,\ldots,x_n)$ must be monotone.

{\it Part 2.} Assume by contradiction that $(x_0,\ldots,x_n)$ is not strictly monotone. Then $(x_0,\ldots,x_n)$ contains a subsequence of the form $(x_{i-1},x_i,\ldots,x_{i+r},x_{i+r+1})$ with $ r \ge 1 $ and $ x_{i-1}\not= x_i=\ldots=x_{i+r}\not= x_{i+r+1}$. To simplify the proof, we assume $x_{i-1}<x_{i+r+1}$. We want built a configuration $(x_{i-1}',x_i',\ldots,x_{i+r}',x_{i+r+1}')$ so that $ x_{i-1}'=x_{i-1} $,
$ x_{i+r+1}' = x_{i+r+1} $ and
$$ E_\omega(x_{i-1},x_i,\ldots,x_{i+r},x_{i+r+1}) > E_\omega(x_{i-1}',x_i',\ldots,x_{i+r}',x_{i+r+1}'). $$
Indeed, since $(x_{i-1},\ldots,x_{i+r+1})$ is minimizing, we have
\[
E_\omega(x_{i-1},\ldots,x_{i+r+1}) = E_\omega(x_{i-1},x_i+\epsilon,x_{i+1}-\epsilon,\ldots,x_{i+r}-\epsilon, x_{i+r+1}) + o(\epsilon^2).
\]
Let
$$ \alpha= \frac{1}{x_i-x_{i-1}}\int_{x_{i-1}}^{x_{i}}\frac{\partial^2 E_\omega}{\partial x\partial y}(x,x_i) \,dx < 0, $$
$$ \beta = \frac{1}{x_{i+r+1}-x_{i+r}}\int_{x_{i+r}}^{x_{i+r+1}} \frac{\partial^2 E_\omega}{\partial x\partial y}(x_{i+r},y) \,dy < 0. $$
By Aubry crossing lemma,
\begin{align*}
E_\omega(x_{i-1},\, &x_i+\epsilon) + E_\omega(x_i+\epsilon,x_{i+1}-\epsilon)  \\
&= E_\omega(x_{i-1},x_{i+1}-\epsilon) + E_\omega(x_i+\epsilon,x_i+\epsilon) -2\epsilon(x_i-x_{i-1})\alpha +o(\epsilon).
\end{align*}
Since $ x_i = x_{i+r} $, obviously $ E_\omega(x_i+\epsilon, x_i +\epsilon) = E_\omega(x_{i+r}+\epsilon, x_{i+r} +\epsilon) $.
Again by Aubry crossing lemma,
\begin{align*}
E_\omega & (x_{i +r}+\epsilon, x_{i +r} +\epsilon) + E_\omega(x_{i+r}-\epsilon, x_{i+r+1}) \\
&= E_\omega(x_{i+r}-\epsilon,x_{i+r}+\epsilon) + E_\omega(x_{i+r}+\epsilon,x_{i+r+1}) -2\epsilon(x_{i+r+1}-x_{i+r})\beta + o(\epsilon).
\end{align*}
Then, for $\epsilon$ small enough, we have
\begin{align*}
E_\omega(x_{i-1},\ldots,x_{i+r+1}) > E_\omega(x_{i-1},x_i-\epsilon,\ldots,x_{i-r-1}-\epsilon,x_{i+r}+\epsilon, x_{i+r+1}),
\end{align*}
which contradicts that $(x_{i-1},\ldots,x_{i+r+1})$ is minimizing.  We have thus proved that $(x_0,\ldots,x_n)$ is strictly monotone.
\end{proof}

\section{Backgrounds on quasicrystals and the notion of locally constant Lagrangians}\label{section:Frenkel-Kontorova-quasicrystals}

\subsection{One-dimensional quasicrystals}\label{sec:QCDelone} 
Our purpose in this section is to  provide  a rich variety of examples of almost crystalline interaction models. We first recall the basic definitions and properties concerning quasicrystals. More details on such a motivating concept can be found, for instance, in \cite{BellissardBenedettiGambaudo,KellendonkPutnam,LagariasPleasants}. Associated with quasicrystals,
we will consider strongly equivariant functions (an inspiration to our concept of locally transversally constant Lagrangian to be introduced in section~\ref{sec:hull}). We recall their main properties here and we refer the reader to
\cite{GambaudoGuiraudPetite2006_01,Kellendonk2003} for the proofs.

\paragraph{Definition of a quasicrystal.}
For a discrete set $\omega \subset \mathbb{R}$, a $\rho$-{\em patch}, or a {\em pattern} for short,  is a finite set  $\texttt{P}$ of the form $\omega \cap \overline{B_{\rho}(x)}$ for some $x\in \omega$ and some constant $\rho>0$, where $B_{\rho}(x)$ denotes the open ball of radius $\rho$ centered in $x$. We say that $y \in \omega$ is an {\em occurrence} of $\texttt{P}$ if  $\omega \cap \overline{B_{\rho}(y)}$ is equal to \texttt{P} up to a translation.
A {\em quasicrystal} is a discrete set $\omega \subset \mathbb{R}$ satisfying
\begin{itemize}
\item[--] {\em finite local complexity}: for any $\rho>0$,  $\omega$ has just a finite number of $\rho$-patches up to translations;
\item[--] {\em repetitivity}: for all $\rho > 0$, there exists $M(\rho) > 0$ such that any closed ball of
radius $M(\rho)$ contains at least one occurrence of every $\rho$-patch of $\omega$;
\item[--] {\em uniform pattern distribution}: for any pattern ${\texttt P}$ of $\omega$, uniformly in $x \in \mathbb R$, the following positive limit exists
$$\lim_{r \to +\infty}\frac{\# \left(\{y \in \mathbb R : y \textrm{ is an occurrence of } {\texttt P}\}\cap B_r(x)\right)}{\text{Leb} (B_r(x))}
= \nu({\texttt P}) > 0.$$
\end{itemize}

Notice that the finite local complexity is equivalent to the fact that the intersection of the difference set $\omega - \omega$ with any bounded set is finite. Basic examples of one-dimensional quasicrystals are the lattice $\mathbb Z$ and the Beatty sequences defined by
$ \omega(\alpha) = \{n \in \mathbb Z : \lfloor n\alpha \rfloor - \lfloor (n-1) \alpha \rfloor \} $ for $ \alpha \in (0,1) $.
Moreover, when $\alpha$ is irrational as in example~\ref{ex:3}, the set $\omega(\alpha)$ provides a non periodic quasicrystal 
for which the repetitively and the uniform pattern distribution are due to the minimality and the unique ergodicity  
of an irrational rotation on the circle. For details, we refer to~\cite{LagariasPleasants}. 

Note that, from the definition, when $\omega$ is a quasicrystal, then the discrete set $\omega + t$, 
obtained by translating any point of $\omega$ by $t \in \mathbb R$, is also a quasicrystal.
A quasicrystal is said to be {\em aperiodic} if $\omega + t = \omega$ implies $t=0$, and {\em periodic} otherwise. 
For Beatty sequences, it is simple to check that the quasicrystal $\omega(\alpha)$ is aperiodic if, and only if, 
$\alpha$ is irrational.

\paragraph{Hull of a quasicrystal.}
Given a quasicrystal $\omega_{*} \subset \mathbb R $, we will equip the set $\omega_{*} + \mathbb R$ of all the translations of $\omega_{*}$ with a topology that reflects its combinatorial properties:  the Gromov-Hausdorff topology.  Roughly speaking, two quasicrystals  in this set will be close whenever they have the same pattern in a large neighborhood of the origin, up to a small translation.

Such a topology is metrizable  and an associated metric can be defined as follows (for details, see \cite{BellissardBenedettiGambaudo,Kellendonk2003}):
given ${\omega}$ and $\underline\omega$ two translations of $\omega_{*}$, their distance is
\begin{align*}
D( \omega, \underline\omega)  := \inf \big\{ \frac{1}{r+1} :
\exists \, \vert t \vert, \vert {\underline t} \vert< \frac{1}{r} \, \textrm{ s.t. }
(\omega + t)\cap B_{r}(0)=({\underline\omega} + {\underline t})\cap B_{r}(0) \big\}.
\end{align*}
The \emph{continuous hull} $ \Omega(\omega_{*}) $ of the quasicrystal $\omega_{*}$ is the completion of this metric space. The finite local complexity hypothesis implies that $\Omega(\omega_{*}) $ is a compact metric space and that any element $\omega \in \Omega(\omega_{*})$ is a quasicrystal which has the same patterns as $\omega_{*}$ up to translations (see \cite{KellendonkPutnam, BellissardBenedettiGambaudo}). Moreover, $\Omega (\omega_{*})$ is equipped with a continuous $\mathbb R$-action given by the homeomorphisms
$$\tau_{t} \colon \omega \mapsto \omega- t \quad \mbox{ for }  \omega \in \Omega(\omega_{*}).$$

The dynamical system $(\Omega(\omega_{*}), \{\tau_{t}\}_{t\in \mathbb R})$ has a dense orbit, namely, the orbit of~$\omega_{*}$. Actually, the repetitivity hypothesis is equivalent to the {\em minimality} of the action, and so any orbit is dense. The uniform pattern distribution is equivalent to the {\em unique ergodicity}: the $\mathbb R$-action has a unique invariant probability measure. For details on these properties, we refer the reader to \cite{KellendonkPutnam, BellissardBenedettiGambaudo}. We summarize these facts in the following proposition.

\begin{proposition}[\cite{KellendonkPutnam, BellissardBenedettiGambaudo}] Let $\omega_{*}$ be a quasicrystal of $\mathbb R$. Then the dynamical system $(\Omega(\omega_{*}), \{\tau_{t}\}_{t\in \mathbb R})$ is minimal and uniquely ergodic.
\end{proposition}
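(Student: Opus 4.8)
The statement to prove is the final proposition: for a quasicrystal $\omega_*$ of $\mathbb{R}$, the dynamical system $(\Omega(\omega_*),\{\tau_t\}_{t\in\mathbb{R}})$ is minimal and uniquely ergodic. My plan is to establish the two conclusions separately, translating each dynamical property into a combinatorial property of the quasicrystal, as the preceding discussion suggests.

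First I would prove minimality. The goal is to show that every orbit $\{\tau_t(\omega)\}_{t\in\mathbb{R}}$ is dense in $\Omega(\omega_*)$ for every $\omega\in\Omega(\omega_*)$; equivalently, that $\Omega(\omega_*)$ contains no proper nonempty closed invariant subset. Fix $\omega,\omega'\in\Omega(\omega_*)$ and $r>0$; by definition of the metric $D$ it suffices to find $t$ with $|t|$ small such that $(\tau_t(\omega))\cap B_{R}(0)$ agrees with $\omega'\cap B_R(0)$ for $R$ large. Since every element of $\Omega(\omega_*)$ has the same patterns as $\omega_*$ up to translation, the pattern $\texttt{P}:=\omega'\cap\overline{B_R(0)}$ (centered suitably at a point of $\omega'$) is an $(R+\text{const})$-patch of $\omega$ as well. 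Repetitivity gives $M=M(R)$ such that every closed ball of radius $M$ meets an occurrence of $\texttt{P}$ in $\omega$; taking the ball centered at $0$, there is $y\in\omega$, $|y|\le M$, which is an occurrence of $\texttt{P}$, so $\tau_y(\omega)$ matches $\omega'$ near the origin. This shows the orbit of $\omega$ comes within $D$-distance $O(1/R)$ of $\omega'$, hence the orbit is dense. Conversely (for the record, though only one direction is needed), failure of repetitivity would produce a pattern of $\omega_*$ whose occurrences have unbounded gaps, and a limiting translate avoiding that pattern would lie outside the orbit closure, breaking minimality.

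Next I would prove unique ergodicity, deducing it from the uniform pattern distribution. The standard criterion is that $(\Omega(\omega_*),\{\tau_t\})$ is uniquely ergodic iff for every $f\in C^0(\Omega(\omega_*))$ the Birkhoff averages $\frac{1}{T}\int_0^T f(\tau_t(\omega))\,dt$ converge, uniformly in $\omega$, to a constant. By density of a suitable algebra of functions it is enough to check this on a class of "pattern-counting" functions: for a pattern $\texttt{P}$ of $\omega_*$, let $f_{\texttt{P}}(\omega)$ be, say, a continuous bump supported near configurations having $\texttt{P}$ at the origin. The ergodic average of such an $f_{\texttt{P}}$ along the orbit of $\omega$ is, up to controlled error terms coming from finite local complexity and the compactness of the space of patterns, comparable to the relative frequency of occurrences of $\texttt{P}$ in $\omega\cap B_T(x)$, which by the uniform pattern distribution hypothesis converges to $\nu(\texttt{P})$ uniformly in the center. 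Hence all Birkhoff averages of $f_{\texttt{P}}$ converge uniformly to the constant determined by $\nu$; by linearity, density, and a standard approximation argument, the same holds for all continuous $f$, and the limiting functional $f\mapsto\lim_T \frac1T\int_0^T f\circ\tau_t\,dt$ is the unique invariant probability measure. Combining the two parts yields the proposition.

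The main obstacle is the bookkeeping in the second part: passing rigorously from the purely combinatorial "uniform pattern distribution" (a statement about counting occurrences of finite patterns along $\mathbb{R}$) to the analytic statement about uniform convergence of Birkhoff averages of arbitrary continuous functions on the hull. This requires: (i) identifying a dense subspace of $C^0(\Omega(\omega_*))$ generated by pattern-indicator-type functions, which in turn uses finite local complexity to guarantee the hull is well-approximated by "cylinder sets" determined by larger and larger central patches; (ii) controlling the discrepancy between the continuous flow average and the discrete occurrence count, where the integration over $t$ crosses tile boundaries; and (iii) making the error estimates uniform in the basepoint $\omega$, which is exactly where the \emph{uniform} in the pattern distribution hypothesis is used. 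Since the proposition is attributed to \cite{KellendonkPutnam,BellissardBenedettiGambaudo}, I would in fact only sketch these steps and refer the reader there for the full technical details, which are by now standard in the tiling-dynamics literature.
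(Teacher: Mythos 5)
The paper itself does not prove this proposition; it is a citation to \cite{KellendonkPutnam, BellissardBenedettiGambaudo}, and the surrounding text merely records the dictionary (repetitivity $\Leftrightarrow$ minimality, uniform pattern distribution $\Leftrightarrow$ unique ergodicity). Your sketch is a correct outline of the standard arguments appearing in those references, so there is nothing in the paper's own text to compare against. One imprecision you should fix in the minimality step: to show $\omega'$ lies in the orbit closure of $\omega$, you need some $t$ (which is typically \emph{large}, on the order of $M(R)$) with $D(\tau_t(\omega),\omega')$ small; the ``small'' quantities in the definition of $D$ are the auxiliary shifts $t,\underline{t}$ applied before comparing the patches on $B_r(0)$, not the translation applied to $\omega$, so ``find $t$ with $|t|$ small'' is not what you actually do in the next line. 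You should also state explicitly (as the paper does just before the proposition) that every $\omega\in\Omega(\omega_*)$ has the same patterns as $\omega_*$ up to translation; this is precisely what lets repetitivity and the uniform pattern distribution transfer from $\omega_*$ to an arbitrary point of the hull, which your argument uses tacitly in both halves.
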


\paragraph{Flow boxes.}\label{sec:flowboxdecompositionQC} 
The \emph{canonical transversal} $ \Xi_{0}(\omega_{*}) $ of the hull $\Omega (\omega_{*})$ of a quasicrystal is the set of quasicrystals $\omega$ in $\Omega(\omega_{*})$ such that the origin $0$ belongs to $\omega$. A basis of the topology on $\Xi_0(\omega_*)$ is given by  cylinder sets $\Xi_{\omega,\rho}$ with $\omega \in \Xi_0(\omega_*)$ and $\rho>0$. 
In general, that is,  for every  $\omega \in \Omega(\omega_{*})$ and  $\rho>0$ such that $\omega\cap B_{\rho}(0) \neq \emptyset$,
 a {\it cylinder set} $\Xi_{\omega,\rho}$ is defined  by
$$\Xi_{\omega, \rho} := \{\underline{{\omega}} \in \Omega(\omega_{*}) :
\omega \cap \overline{B_{\rho}(0)} = {\underline{\omega}} \cap \overline{B_{\rho}(0)} \}.$$
If $\omega \in\Xi_0(\omega_*)$, then $\Xi_{\omega, \rho}  \subset \Xi_0(\omega_*)$.

The designation of transversal comes from the obvious fact that the set $\Xi_{0}(\omega_{*})$ is transverse to the action: for any real  $t$ small enough, we have  $\tau_{t}(\omega) \not\in \Xi_{0}(\omega_{*})$
for any $\omega \in \Xi_{0}(\omega_{*})$. This gives a Poincar\'e section.

\begin{proposition}[\cite{KellendonkPutnam}]
The canonical transversal $\Xi_{0}(\omega_{*})$ and the cylinder sets $\Xi_{\omega, \rho}$ associated with an aperiodic quasicrystal  $\omega_{*}$ are Cantor sets. If $\omega_{*}$ is a periodic quasicrystal, these sets are finite.
\end{proposition}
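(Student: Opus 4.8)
The plan is to use the characterization (Brouwer) of a Cantor set as a nonempty, compact, metrizable, perfect (no isolated point) and totally disconnected space, and to verify these four properties for $\Xi_0(\omega_*)$ and the cylinders $\Xi_{\omega,\rho}$ when $\omega_*$ is aperiodic, handling the periodic case separately. Nonemptiness is immediate: $\Xi_0(\omega_*)$ contains $\omega_*-s$ for any $s\in\omega_*$ (a quasicrystal is relatively dense, hence nonempty), and $\Xi_{\omega,\rho}$ contains $\omega$ by definition, while metrizability is inherited from $\Omega(\omega_*)$. For compactness I would first show $\Xi_0(\omega_*)$ is closed in the compact space $\Omega(\omega_*)$: if $\omega_n\to\omega$ with $0\in\omega_n$, the definition of the metric $D$ produces points of $\omega$ at distance tending to $0$ from the origin, and the uniform discreteness of the quasicrystals in the hull forces $0\in\omega$. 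Then, fixing $\rho>0$, finite local complexity implies there are only finitely many $\rho$-patches containing the origin, so $\Xi_0(\omega_*)=\bigsqcup_{i=1}^{N}\Xi_{\omega_i,\rho}$ is a finite partition into cylinder sets of radius $\rho$, which are open since they form a basis of the topology; hence each $\Xi_{\omega_i,\rho}$ is also closed, so clopen in $\Xi_0(\omega_*)$ and, being closed in the compact set $\Xi_0(\omega_*)$, compact. Finally, since two distinct elements of $\Xi_0(\omega_*)$ differ at a point of some bounded ball, they are separated by disjoint cylinders of large radius, so $\Xi_0(\omega_*)$ and every subspace of it are totally disconnected.

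The heart of the matter is perfectness in the aperiodic case. Suppose some $\omega\in\Xi_0(\omega_*)$ were isolated; as the cylinders form a neighbourhood basis at $\omega$ and shrink as $\rho$ grows, there is $\rho>0$ with $\Xi_{\omega,\rho}=\{\omega\}$. Put $P:=\omega\cap\overline{B_\rho(0)}$, a $\rho$-patch centered at $0\in\omega$. By repetitivity, $P$ has an occurrence at some $y\in\omega$ with $y\neq0$. Then $\omega-y=\tau_y(\omega)\in\Omega(\omega_*)$ contains $0$ and satisfies $(\omega-y)\cap\overline{B_\rho(0)}=\bigl(\omega\cap\overline{B_\rho(y)}\bigr)-y=P$, using that $y$ is an occurrence of $P$; hence $\omega-y\in\Xi_{\omega,\rho}=\{\omega\}$, so $\omega$ is periodic of period $y\neq0$. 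But then the $\tau$-orbit of $\omega$ is a topological circle, a nonempty closed $\tau$-invariant subset of $\Omega(\omega_*)$, so minimality forces $\Omega(\omega_*)$ to be that circle; thus $\omega_*$ lies in the $\tau$-orbit of $\omega$ and is itself periodic of period $y$, contradicting aperiodicity. Hence $\Xi_0(\omega_*)$ has no isolated point, and since each cylinder $\Xi_{\omega,\rho}$ with $\omega\in\Xi_0(\omega_*)$ is nonempty and open in $\Xi_0(\omega_*)$, it too is perfect; combined with the first paragraph this shows $\Xi_0(\omega_*)$ and these cylinders are Cantor sets. For a general $\omega\in\Omega(\omega_*)$ with $\omega\cap B_\rho(0)\neq\emptyset$, one picks $a\in\omega$ with $|a|<\rho$ and observes that the homeomorphism $\tau_a$ carries $\Xi_{\omega,\rho}$ onto a nonempty clopen subset of $\Xi_0(\omega_*)$, hence again onto a Cantor set.

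It remains to treat the periodic case. If $\omega_*$ has minimal period $p>0$, the map $t\mapsto\omega_*-t$ factors through $\mathbb{R}/p\mathbb{Z}$ and induces a continuous bijection from the compact space $\mathbb{R}/p\mathbb{Z}$ onto the dense set $\omega_*+\mathbb{R}\subseteq\Omega(\omega_*)$, hence a homeomorphism onto all of $\Omega(\omega_*)$, so the hull is a single periodic orbit. Then $\Xi_0(\omega_*)=\{\omega_*-s:s\in\omega_*\}$, and $\omega_*-s_1=\omega_*-s_2$ forces $p\mid(s_1-s_2)$ by minimality of the period, so $\Xi_0(\omega_*)$ has exactly $\#\bigl(\omega_*\cap[0,p)\bigr)$ elements, a finite number since $\omega_*$ is uniformly discrete. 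Every cylinder set, being contained (up to a translation, as above) in $\Xi_0(\omega_*)$, is therefore finite as well.

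The conceptually delicate step is the perfectness argument: converting the combinatorial repetitivity hypothesis into the topological statement that the transversal has no isolated point, and then using minimality to promote an isolated transversal point to global periodicity. The remaining verifications — in particular that cylinder sets are clopen — are routine but require some care because the metric $D$ only compares patterns up to small translations; this is dealt with by finite local complexity and uniform discreteness of the quasicrystals in the hull, exactly as in the cited references.
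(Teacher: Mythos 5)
Your proposal is correct and follows the standard route used in the cited references: the paper itself gives no proof (it is quoted verbatim from Kellendonk--Putnam), and the canonical argument is precisely the one you give — compactness and total disconnectedness from finite local complexity, perfectness from repetitivity combined with aperiodicity, and the circle identification in the periodic case. The perfectness argument (an isolated transversal point would force a local patch repetition to coincide with the original, hence a nontrivial period, hence a circle hull by minimality, hence $\omega_*$ periodic) is exactly the right idea.

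Two small points deserve a sentence of care if you were to polish this. First, when you assert $(\omega-y)\cap\overline{B_\rho(0)}=P$ you are implicitly using the convention that the translation witnessing "$y$ is an occurrence of $P$" is $-y$ itself; the paper's phrasing ("up to a translation") is loose, but this is the intended and standard reading, so no harm done. Second, the claim that $\tau_a$ carries $\Xi_{\omega,\rho}$ onto a clopen subset of $\Xi_0(\omega_*)$ for general $\omega$ needs one line of justification: $\tau_a(\Xi_{\omega,\rho})$ is the set of $\omega'\in\Xi_0(\omega_*)$ agreeing with $\omega-a$ on $\overline{B_\rho(0)}-a$, and since $\overline{B_\rho(0)}-a\subset\overline{B_{\rho+|a|}(0)}$, both this set and its complement in $\Xi_0(\omega_*)$ are unions of cylinders of radius $\rho+|a|$, hence clopen. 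Neither point is a gap in the argument, only in the exposition.
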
 

This allows us to give a more dynamical description of the hull in one dimension 
by considering the {\em return time} function $\Theta : \Xi_{0}(\omega_{*}) \to \mathbb R^+$ defined by
$$ \Theta(\omega) := \inf \{t >0 : \tau_{t} ({\omega}) \in \Xi_{0}(\omega_{*}) \}, \quad \forall \, \omega \in \Xi_{0}(\omega_{*}). $$
The finite local complexity implies that this function is locally constant.
The {\em first return map} $T \colon \Xi_{0}(\omega_{*}) \to \Xi_{0} (\omega_{*})$ is then given by
$$ T (\omega) := \tau_{\Theta(\omega)}(\omega), \quad \forall \, \omega \in  \Xi_{0}(\omega_{*}).$$
Remark that the unique invariant probability measure on $\Omega(\omega_{*})$ induces a finite measure on $\Xi_{0}(\omega_{*})$ that is $T$-invariant (see \cite{GambaudoGuiraudPetite2006_01}).

It is straightforward to check that the dynamical system $(\Omega(\omega_{*}), \{\tau_{t}\}_{t\in \mathbb R})$ is conjugate to the suspension of the map $T$ on the set $\Xi_{0}(\omega_{*})$ with the time map given by the function $\Theta$.
Thus, when $\omega_{*}$ is periodic, the hull $\Omega(\omega_{*})$ is homeomorphic to a circle. Otherwise, $\Omega(\omega_{*})$ has a laminated structure: it is locally the Cartesian product of a Cantor set by an interval. 

To be more precise, in the aperiodic case, for every $\omega\in\Omega(\omega_*)$ and $r>0$, if $\rho$ is large enough, the set
$$  U_{\omega, \rho, r} := \{ {\underline{\omega}} - t : t \in B_r(0),\  \underline{\omega} \in \Xi_{{\omega},\rho}\} $$
is  open  and homeomorphic to $ B_r(0)\times \Xi_{\omega,\rho}$ by the map
$(t,\underline{\omega}) \to \tau_t(\underline{\omega}) =  \underline{\omega}-t$. Their collection forms a base for the topology of 
$\Omega(\omega_{*})$.  In this case, $ U_{\omega, \rho, r}$ is called {\it a flow box} of the cylinder set $\Xi_{\omega,\rho}$.

The next lemma improves the fact that the return time is locally constant.

\begin{lemma}[\cite{BellissardBenedettiGambaudo}]\label{lem:flowboxDelone}
Let $\omega_{*}$ be an aperiodic quasicrystal.  Let $U_i := U_{\omega_i, \rho_i,r_i}$, $i=1,2$, be two flow boxes such that $U_1\cap U_2 \not=\emptyset$. Then there exists a real number $a\in\mathbb{R}$ such that, for every $\underline{\omega}_i \in \Xi_{\omega_i,\rho_i}$, 
$|t_i|<r_i$, $i=1,2$,
\[
\underline{\omega}_1-t_1 = \underline{\omega}_2-t_2  \quad\Longrightarrow\quad t_2 = t_1 - a.
\]
\end{lemma}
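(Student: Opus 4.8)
The plan is to combine the product structure of flow boxes with the aperiodicity of $\omega_*$. Recall that $(t,\underline{\omega})\mapsto\tau_t(\underline{\omega})=\underline{\omega}-t$ is a homeomorphism from $B_{r_i}(0)\times\Xi_{\omega_i,\rho_i}$ onto $U_i$, so every $p\in U_i$ has well-defined coordinates $(t_i(p),\underline{\omega}_i(p))$ with $\underline{\omega}_i(p)=p+t_i(p)\in\Xi_{\omega_i,\rho_i}$. Since $U_1\cap U_2\neq\emptyset$, pick $p_0$ in it and set $a:=t_1(p_0)-t_2(p_0)$; the lemma amounts to showing that $b(p):=t_1(p)-t_2(p)$ is constant, equal to $a$, on $U_1\cap U_2$. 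First I would note that $\tau_{b(p)}(\underline{\omega}_1(p))=\underline{\omega}_1(p)-b(p)=p+t_2(p)=\underline{\omega}_2(p)$, so $\tau_{b(p)}$ carries $\underline{\omega}_1(p)\in\Xi_{\omega_1,\rho_1}$ into $\Xi_{\omega_2,\rho_2}$ and $|b(p)|<r_1+r_2$. In particular $b(p)\in\underline{\omega}_1(p)$ (because $\tau_{b(p)}(\underline{\omega}_1(p))\in\Xi_0(\omega_*)$), hence --- taking $\rho_1\geq r_1+r_2$, which we assume, as is the case once $\rho_1$ is large enough to form a flow box --- one has $b(p)\in\underline{\omega}_1(p)\cap\overline{B_{r_1+r_2}(0)}=\omega_1\cap\overline{B_{r_1+r_2}(0)}$, a \emph{fixed finite set}.

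Next I would show that $b$ is locally constant. The set of $s$ with $\tau_s(\underline{\omega}_1(p))\in\Xi_0(\omega_*)$ is exactly $\underline{\omega}_1(p)$, a uniformly discrete set which, inside the window, depends only on the $\rho_1$-pattern of $\underline{\omega}_1(p)$; hence along each orbit segment contained in $U_1\cap U_2$ the return point into $\Xi_{\omega_2,\rho_2}$ cannot jump, that is, $b$ is constant on orbit segments (the chart change along a leaf is a pure translation). Transversally, the return time function and the first-return map are locally constant by finite local complexity and the cylinders $\Xi_{\omega_i,\rho_i}$ are clopen, so $b$ is locally constant there too. Thus $b\colon U_1\cap U_2\to\omega_1\cap\overline{B_{r_1+r_2}(0)}$ is a locally constant map into a finite set.

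It remains to upgrade local constancy to global constancy, and this is the heart of the matter. Centering at $\underline{\omega}_1(p)$, which agrees with $\omega_1$ on $\overline{B_{\rho_1}(0)}$, and using $\underline{\omega}_1(p)=\underline{\omega}_2(p)+b(p)$ with $\underline{\omega}_2(p)$ agreeing with $\omega_2$ on $\overline{B_{\rho_2}(0)}$, one sees that $\omega_1$ carries an occurrence of the central $\rho_2$-pattern $P_2:=\omega_2\cap\overline{B_{\rho_2}(0)}$ precisely at $b(p)$. The key input is then: by aperiodicity of $\omega_*$ (so that every element of its hull is aperiodic), together with finite local complexity and compactness of the hull, the minimal recurrence gap of $\rho$-patterns tends to $+\infty$ as $\rho\to+\infty$ --- otherwise a fixed nonzero $d$ would witness that arbitrarily large patterns of some $\underline{\omega}$ in the hull occur both at $0$ and at $d$, forcing $\underline{\omega}+d=\underline{\omega}$. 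Hence, $\rho_2$ being large since $U_2$ is a flow box, $P_2$ recurs in $\omega_1$ at \emph{at most one} point of the window $(-(r_1+r_2),r_1+r_2)$; that point must be $b(p)$ for every $p\in U_1\cap U_2$, so $b\equiv a$.

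I expect the last step to be the main obstacle: local constancy is routine, but $U_1\cap U_2$ is not connected (its transverse direction is a Cantor set), so connectedness is unavailable and one must argue through the recurrence structure of patterns --- which is exactly where aperiodicity is essential, the conclusion being genuinely false for a periodic quasicrystal (there the time change can differ by a multiple of the period between two components of the intersection). A secondary point demanding care is the bookkeeping of $\rho_1,\rho_2$ against $r_1,r_2$ when passing from ``$\underline{\omega}_1(p)$ carries $P_2$ at $b(p)$'' to ``$\omega_1$ carries $P_2$ at $b(p)$''; this is absorbed into the standing assumption that the $\rho_i$ are taken large enough for the $U_{\omega_i,\rho_i,r_i}$ to be genuine flow boxes.
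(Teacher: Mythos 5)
The paper offers no proof of this lemma --- it is cited verbatim from \cite{BellissardBenedettiGambaudo} --- so there is no internal argument to compare against; I can only assess your proposal on its own terms. The reduction to a function $b$ on $U_1\cap U_2$, the observation that $b$ is continuous and hence locally constant because its values lie in a uniformly discrete set, and the remark that a nonzero period would let $b$ jump are all sound. But the bookkeeping you flag at the end as ``secondary'' and ``absorbed into the standing assumption that the $\rho_i$ are large enough to form flow boxes'' is precisely where the argument fails, and it is not absorbed: the condition that $U_i=U_{\omega_i,\rho_i,r_i}$ is a flow box only ties $\rho_i$ to its \emph{own} $r_i$ (it says the central $\rho_i$-pattern of $\omega_i$ recurs at gaps larger than $2r_i$ in every element of the hull); it imposes no relation at all between $\rho_1$ and $(\rho_2,r_2)$, nor between $\rho_2$ and $(\rho_1,r_1)$. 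Concretely, (i) passing from ``$\underline{\omega}_1(p)$ carries $P_2$ at $b(p)$'' to ``$\omega_1$ carries $P_2$ at $b(p)$'' requires $\underline{\omega}_1(p)$ and $\omega_1$ to agree on the full $\rho_2$-ball centered at $b(p)$, i.e.\ $\rho_1\ge \rho_2+|b(p)|$ --- not merely $\rho_1\ge r_1+r_2$ --- and neither inequality follows from $U_1$ being a flow box; (ii) the claimed uniqueness of the occurrence of $P_2$ in the window $(-(r_1+r_2),r_1+r_2)$ needs the recurrence gap of $\rho_2$-patterns to exceed $2(r_1+r_2)$, whereas $U_2$ being a flow box only guarantees a gap larger than $2r_2$, so when $r_1$ is not small the argument produces no contradiction. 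A smaller unflagged point: ``$b(p)\in\underline{\omega}_1(p)$'' silently assumes $0\in\omega_2$, which the definition of $\Xi_{\omega_2,\rho_2}$ does not give you (only $\omega_2\cap \overline{B_{\rho_2}(0)}\neq\emptyset$); this is fixable by replacing $0$ with a point of that intersection, but it further degrades the size estimates in (i).

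The upshot is that you have correctly isolated where aperiodicity must enter, and your recurrence-gap lemma ($\rho$-patterns of elements of the hull have minimal recurrence gap $\to\infty$ as $\rho\to\infty$) is true and proved by the right compactness argument, but the step from local to global constancy cannot be run through ``$\omega_1$ carries $P_2$ at a single point of a fixed finite window,'' because the hypotheses do not make $\rho_1$ large relative to $\rho_2+r_1+r_2$, nor the $P_2$-recurrence gap large relative to $r_1$. A complete proof has to exploit the injectivity of \emph{both} charts simultaneously --- e.g.\ transporting the equality of the central $\rho_2$-patterns of $\underline{\omega}_2(p)$ and $\underline{\omega}_2(p')$ through the cylinder structure to manufacture two elements of the same transversal $\Xi_{\omega_i,\rho_i}$ on one orbit at a gap smaller than $2r_i$, contradicting the flow-box property --- rather than trying to read $b$ off a single fixed finite subset of $\omega_1$. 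As written, the proposal has a genuine gap at the very step you yourself identified as ``the heart of the matter.''
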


\paragraph{Strongly equivariant function.}\label{paragraph:StrgEquiv} 
Associated with a quasicrystal  $\omega_{*}$ of $\mathbb R$, we will consider strongly $\omega_{*}$-equivariant functions, as introduced in \cite{Kellendonk2003}. A potential  $V_{\omega_{*}} \colon \mathbb{R} \to \mathbb{R}$ is said to be {\em strongly} $\omega_{*}$-{\em equivariant} (with range $R$) if there exists a constant $R>0$ such that   
$$ V_{\omega_{*}}(x) = V_{\omega_{*}}(y), \qquad \forall \ x, y \in \mathbb R \, \text{ with } 
\left(B_{R}(x) \cap \omega_{*} \right)  - x = \left(B_{R}(y) \cap \omega_{*} \right) - y. $$

Of course any periodic potential  is strongly equivariant with respect to a discrete  lattice of periods.
In example \ref{ex:3}, the function $V_{\omega(\alpha)}$ is strongly $\omega(\alpha)$-equivariant with range 
$R = \lfloor \frac 1\alpha \rfloor +1$.  Let us mention another example from \cite{Kellendonk2003}, which holds for any quasicrystal $\omega_{*}$. Let $\delta := \sum_{x \in \omega_{*}} \delta_x $ be the Dirac comb supported on the points of a quasicrystal $\omega_{*}$ and let $ g \colon \mathbb R \to \mathbb R$ be a smooth function with compact support. Then, one may check that the convolution product $\delta * g$ is a smooth strongly $\omega_{*}$-equivariant function. Actually, any strongly $\omega$-equivariant function can be defined by a similar procedure \cite{Kellendonk2003}.

A strongly equivariant potential factorizes through a continuous function on the hull $\Omega(\omega_{*})$. More precisely,
the following lemma shows that  strongly $\omega_{*}$-equivariant functions arise from functions on the space $\Omega(\omega_{*})$ that are constant on the cylinder sets.

\begin{lemma}[\cite{GambaudoGuiraudPetite2006_01,Kellendonk2003}]\label{lem:equivfunct} Given a quasicrystal $\omega_{*}$ of  $\mathbb R$, let $V_{\omega_{*}} \colon \mathbb R \to \mathbb R$ be a continuous strongly  $\omega_{*}$-equivariant function with range $R$. Then, there exists a unique continuous function $V \colon \Omega(\omega_{*}) \to \mathbb R$ such that
$$ V_{\omega_{*}}(x) = V \circ \tau_{x}(\omega_{*}), \quad \forall \, x \in \mathbb R.$$
Moreover,  $V$ is constant on any cylinder set $\Xi_{\omega, R+S}$, with $\omega \in \Omega(\omega_{*})$ and $S\geq0$.
In addition, if $ V_{\omega_{*}} $ is $ C^2 $, then $ V $ is $ C^2 $ along the flow (that is, for all $ \omega $, the function
$ x \in \mathbb R \mapsto V(\tau_x(\omega)) $ is $ C^2 $).
\end{lemma}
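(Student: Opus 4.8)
The plan is to define $V$ on the dense orbit $\mathcal{O} := \{\tau_x(\omega_*) : x \in \mathbb{R}\}$ by the only admissible formula $V(\tau_x(\omega_*)) := V_{\omega_*}(x)$, to show that it extends continuously, and then to read off the two additional properties. (The periodic case is elementary — strong equivariance forces $V_{\omega_*}$ to be periodic and $\Omega(\omega_*)$ is a circle — so I focus on the aperiodic case, where flow boxes are available.) First I would check consistency on $\mathcal{O}$: if $\tau_x(\omega_*) = \tau_y(\omega_*)$, i.e. $\omega_* - x = \omega_* - y$, then in particular $(B_R(x) \cap \omega_*) - x = (B_R(y) \cap \omega_*) - y$, so strong $\omega_*$-equivariance gives $V_{\omega_*}(x) = V_{\omega_*}(y)$. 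Hence $\bar V \colon \mathcal{O} \to \mathbb{R}$ is well defined; since $\mathcal{O}$ is dense by minimality, any continuous extension to $\Omega(\omega_*)$ is unique, and it remains to produce one.

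The core of the proof is a flow-box computation showing that $\bar V$ is locally constant transversally to the flow. Fix a flow box $U_{\omega,\rho,r}$, identified with $B_r(0) \times \Xi_{\omega,\rho}$ through $(t,\underline\omega) \mapsto \tau_t(\underline\omega) = \underline\omega - t$. For an orbit point $\underline\omega = \tau_s(\omega_*) \in \Xi_{\omega,\rho}$ and $|t| < r$ one has $\bar V(\tau_t(\underline\omega)) = V_{\omega_*}(s+t)$. The key claim is that this value depends on $\underline\omega$ only through the sub-cylinder $\Xi_{\underline\omega, R+r}$: if two orbit points $\tau_s(\omega_*)$ and $\tau_{s'}(\omega_*)$ agree on $\overline{B_{R+r}(0)}$, then for $|z| \le R$ one has $z \in \tau_t(\tau_s(\omega_*))$ iff $z+t$ (which satisfies $|z+t| \le R+r$) belongs to $\tau_s(\omega_*)$, iff it belongs to $\tau_{s'}(\omega_*)$, iff $z \in \tau_t(\tau_{s'}(\omega_*))$; hence the patterns of $\omega_*$ near $s+t$ and near $s'+t$ agree in the ball of radius $R$, and strong equivariance gives $V_{\omega_*}(s+t) = V_{\omega_*}(s'+t)$. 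Since the cylinder sets form a basis of the Cantor topology of $\Xi_{\omega,\rho}$ and $\mathcal{O}$ is dense, this lets me define, for every $\underline\omega \in \Xi_{\omega,\rho}$ and $|t| < r$, a value $W(\underline\omega,t)$ equal to the common $V_{\omega_*}(s_0 + t)$ taken over orbit points $\tau_{s_0}(\omega_*) \in \Xi_{\underline\omega, R+r}$. This $W$ is locally constant in $\underline\omega$ and continuous — indeed $C^2$ whenever $V_{\omega_*}$ is — in $t$, hence jointly continuous on $B_r(0) \times \Xi_{\omega,\rho}$, and it restricts to $\bar V$ on $\mathcal{O} \cap U_{\omega,\rho,r}$. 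Transporting $W$ back, every flow box carries a continuous function extending $\bar V|_{\mathcal O}$; any two of them agree on the dense set $\mathcal{O}$, hence on overlaps, so they glue to a global continuous $V$ satisfying $V \circ \tau_x(\omega_*) = V_{\omega_*}(x)$.

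The two remaining assertions then follow. Taking $t = 0$ in the computation above: if $\underline\omega, \underline\omega' \in \mathcal{O}$ lie in a common cylinder $\Xi_{\eta, R+S}$ with $S \ge 0$ (so they agree on $\overline{B_{R+S}(0)} \supseteq \overline{B_R(0)}$), then $\bar V(\underline\omega) = \bar V(\underline\omega')$; since $\mathcal{O}$ is dense in the clopen set $\Xi_{\eta, R+S}$ and $V$ is continuous, $V$ is constant on $\Xi_{\eta, R+S}$, and $\Xi_{\omega, R+S} = \Xi_{\eta, R+S}$ for any $\eta$ in it. For the regularity statement, around $\tau_{x_0}(\omega)$ pick a flow box; there $x \mapsto V(\tau_x(\omega))$ coincides near $x_0$ with $t \mapsto W(\underline\omega,t) = V_{\omega_*}(s_0 + t)$ for a suitable orbit representative $\tau_{s_0}(\omega_*)$, which is $C^2$ in $t$; covering $\mathbb{R}$ by such charts shows $x \mapsto V(\tau_x(\omega))$ is globally $C^2$.

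I expect the main obstacle to be precisely the extension step: understanding how $\bar V$ behaves near points of $\Omega(\omega_*)$ outside the orbit. The mechanism is the observation that a translation of length less than $r$ turns agreement on $\overline{B_{R+r}(0)}$ into agreement on $\overline{B_R(0)}$, so that strong equivariance with range $R$ makes $\bar V$ constant along each Cantor fibre of a flow box. The only delicate bookkeeping is ensuring the cylinders $\Xi_{\underline\omega, R+r}$ used above are actually well defined (taking the radius slightly larger if necessary, which is possible by relative denseness of quasicrystals) and keeping track of the open/closed-ball conventions in the definitions of equivariance and of cylinder sets; both are routine.
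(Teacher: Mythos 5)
The paper states this lemma as a result imported from \cite{GambaudoGuiraudPetite2006_01,Kellendonk2003} and does not prove it, so there is no in-paper proof to compare against. Evaluated on its own merits, your proof is correct and is essentially the argument one would expect: restrict attention to the aperiodic case, define $\bar V$ on the dense orbit, and use the flow-box computation (translation by $|t|<r$ turns agreement on $\overline{B_{R+r}(0)}$ into agreement of $R$-patterns at the translated origin) to show $\bar V$ is transversally locally constant, hence extends continuously and inherits constancy on cylinders $\Xi_{\omega,R+S}$ and $C^2$-smoothness along orbits. This is precisely the mechanism the paper itself invokes in the remark immediately after the lemma (``This comes from the fact that $\tau_{x}(\underline{\omega}') \in \Xi_{\tau_{x}(\underline{\omega}), R}$ whenever $\underline{\omega}, \underline{\omega}' \in \Xi_{\omega, R+S}$ and $|x| < S$''), so you have in effect reconstructed the standard proof. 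Two small bookkeeping points you flag yourself are indeed benign: the open-ball $B_R$ in the equivariance definition versus the closed-ball $\overline{B_\rho}$ in the cylinder definition is absorbed by the inclusion $B_R(0)\subset\overline{B_{R+S}(0)}$ when $S\ge 0$; and the existence of orbit points in $\Xi_{\underline\omega,R+r}$ follows because a sub-cylinder $\Xi_{\underline\omega,\rho'}\subset\Xi_{\underline\omega,R+r}$ with $\rho'$ large is the base of a flow box, which is open and hence meets the dense orbit, and the base point of that flow box coordinate is again on the orbit. The argument that orbit points are dense in $\Xi_{\omega,R+S}$ with its induced topology (needed to pass constancy from $\bar V$ on $\mathcal O\cap\Xi_{\omega,R+S}$ to $V$ on all of $\Xi_{\omega,R+S}$) uses the same flow-box projection and is worth making explicit, but it is correct.
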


Note that, for every $ S  > 0$, 
the function $V \colon \Omega(\omega_{*}) \to \mathbb R$ is {\it transversally constant} on each flow box 
$ U_{\omega, R+S,  S}$,  that is,
$$ V\left(\tau_x(\underline{\omega})\right) = V\left(\tau_x(\underline{\omega}')\right), \qquad \forall \, |x| < S, \
\forall \, \underline{\omega}, \underline{\omega}' \in \Xi_{\omega, R+S}. $$
This comes from the fact that  $\tau_{x}(\underline{\omega}') \in \Xi_{\tau_{x}(\underline{\omega}), R}$ whenever  
$\underline{\omega}, \underline{\omega}' \in \Xi_{\omega, R+S}$ and $ |x| < S$, since $V$ is constant on such cylinder sets. 

\subsection{Flow boxes and locally constant Lagrangians}\label{sec:hull}

In order to complete the definition of almost crystalline interaction models  (definition~\ref{definition:quasicrystal_WeaklyTwistProperty}), 
we introduce here the technical concept of a locally transversally constant Lagrangian that we adopt in this paper. By doing this, 
we focus on a class of models whose typical examples are provided by suspensions of minimal and uniquely ergodic homeomorphisms on 
a Cantor set, with locally constant ceiling functions. Such a modeling approach enables us to consider general $\mathbb R$-actions, as, for instance, equicontinuous, distal or expansive  ones, whereas, in the aperiodic quasicrystal case, one deals always with expansive actions. 
We also show that strongly equivariant functions associated with a quasicrystal provide locally transversally constant Lagrangians.

In topological dynamics, the study of minimal homeomorphisms on a Cantor set has been enriched by an invaluable combinatorial 
description of the system via Kakutani-Rohlin towers (see, for instance, \cite{GPS}). Using a similar strategy in our context, we describe, in a second part, the transverse measures associated with the probability measures on the space $\Omega$ invariant by the flow $\tau$. Characterized by the average frequency of return times to a particular transverse section of the flow, these measures are key ingredients in the proof of theorem~\ref{theorem:MainContribution}.

\paragraph{Precise notion of locally constant Lagrangians.}
Our definition of a locally transversally constant Lagrangian is based on (topological) flow boxes, transverse sections, and flow box decompositions. Even if we consider only the one-dimensional case, these concepts can be introduced in any dimension.

\begin{definition}
\label{definition:BoxDecomposition}
Let $(\Omega,\{\tau_t\}_{t\in\mathbb{R}})$ be an almost periodic environment.

\noindent -- An open  set $U\subset\Omega$ is said to be a flow box of size $R>0$ if there exists a compact subset $\Xi\subset\Omega$, called transverse section, such that:

\noindent \quad  $\centerdot$ the induced topology on $\Xi$ admits a basis of closed and open subsets, called clopen subsets,

\noindent \quad $\centerdot$ $ \tau(t,\omega) = \tau_t(\omega)$, $ (t, \omega) \in \mathbb R \times \Xi $, is a homeomorphism from $B_R(0)\times\Xi$ onto $U$.

\noindent We shall later write $B_R=B_R(0)$   and $\tau_{(i)}^{-1} = \tau^{-1}_{\vert U_{i}} :U_{i}\to B_R\times\Xi$ for a flow box $U_{i}$.

\noindent  -- Two flow boxes $U_i=\tau[B_{R_i}\times\Xi_i]$ and $U_j = \tau[B_{R_j}\times\Xi_j]$ are said to be admissible if,  whenever $U_i\cap U_j\not=\emptyset$, there exists $a_{i,j} \in \mathbb{R}$ such that
\[
\tau_{(j)}^{-1}\circ \tau(t,\omega) = (t-a_{i,j},\tau_{a_{i,j}}(\omega)), \quad \forall \, (t,\omega) \in  \tau_{(i)}^{-1}(U_i \cap U_j).
\]

\noindent -- A flow box decomposition $\{U_i\}_{i\in I}$ is a cover of $\Omega$ by admissible flow boxes.
\end{definition}

 Lemma \ref{lem:flowboxDelone} implies that the hull of a quasicrystal admits a flow box decomposition given by flow boxes of cylinder sets \cite{BellissardBenedettiGambaudo}.
Standard examplifications of the structures formalized in definition~\ref{definition:BoxDecomposition} 
are provided by the suspensions of minimal homeomorphisms on Cantor sets, 
with locally constant ceiling functions. This context includes expansive flows (as in the case of one-dimensional quasicrystals) and equicontinuous ones. 
But, in general, a minimal flow does not possess a cover of flow boxes. 

An interaction model does not have a canonical notion of vertical section. 
Such a notion occurs naturally whenever the model admits a flow box decomposition. 
More importantly, in this situation, we give and exploit a definition 
of locally transversally constant Lagrangian.

\begin{definition}
\label{definition:TransversallyConstant}
Let $(\Omega,\{\tau_t\}_{t\in\mathbb{R}},L)$ be an almost periodic interaction model admitting a flow box decomposition.

\noindent -- A flow box $\tau[B_R \times \Xi]$ is said to be compatible with respect to a flow box decomposition $\{U_i\}_{i\in I}$,
where  $U_i=\tau[B_{R_i} \times \Xi_i]$,  if  for every $|t| < R$, there exist $i\in I$,  $|t_i| < R_i$ and a  clopen subset $\tilde\Xi_i$ of $\Xi_i$  such that $\tau_t(\Xi) = \tau_{t_i}(\tilde\Xi_i)$.

\noindent -- $L$ is said to be {\em locally transversally constant}  with respect to a flow box decomposition $\{U_i\}_{i \in I}$ if, for every flow box $\tau[B_R \times \Xi]$  compatible with respect to $\{U_i\}_{i \in I}$,
\[
\forall \, \omega,\omega'\in\Xi, \ \forall \, |x|,|y|<R, \quad E_{\omega'}(x,y) = E_\omega(x,y).
\]
\end{definition}

As in examples~\ref{ex:1} and \ref{ex:3},
interaction models with weakly twist and locally transversally constant Lagrangians can be easily built when the interaction energy has the form
$E_\omega(x,y) = W(y-x)+V_1(\tau_x(\omega))+V_2(\tau_y(\omega))$, where $W$ is {\em superlinear weakly convex} (namely, $W$ is $C^2$, $W''>0$ a.e.  and $|W'(t)|\to+\infty$ as $|t|\to+\infty$), and $ V_1 $ and $ V_2 $ are locally transversally constant, in the sense described below.

\begin{definition}
\label{definition:LocallyTransversallyConstant}
Let $(\Omega,\{\tau_t\}_{t\in\mathbb{R}},L)$ be an almost periodic interaction model.
A function $V:\Omega\to\mathbb{R}$ is said to be locally transversally constant with respect to a 
flow box decomposition $\{U_i\}_{i\in I}$, where $U_i=\tau(B_{R_i}\times \Xi_i)$, if
\[
\forall \, i\in I, \ \forall \, \omega,\omega'\in\Xi_i, \ \forall \, |x|< R_i, \quad V(\tau_x(\omega)) = V(\tau_x(\omega')).
\]
\end{definition}

Notice that, in example~\ref{ex:2}, the locally transversally constant property does not hold.
We check in the next lemma that locally transversally constant functions  $V_1, V_2:\Omega\to\mathbb{R}$ indeed enable to construct a transversally constant Lagrangian. 

\begin{lemma}
\label{lemma:LocallyTransversallyConstant}
Let $(\Omega,\{\tau_t\}_{t\in\mathbb{R}},L)$ be an  almost periodic interaction model admitting a flow box decomposition. Let $V_1,V_2:\Omega\to\mathbb{R}$ be two locally transversally constant functions on the same flow box decomposition, and $W=\mathbb{R}\to\mathbb{R}$ be any function. Define $L(\omega,t)=W(t)+V_1(\omega)+V_2(\tau_t(\omega))$. Then $L$ is locally transversally constant.
\end{lemma}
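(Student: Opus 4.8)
The plan is to reduce everything to a single auxiliary fact about one locally transversally constant function, and then apply it to $V_1$ and $V_2$ separately.

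First I would write out the interaction energy explicitly. By the Lagrangian form \eqref{Lagrangianform} and the cocycle property $\tau_{y-x}\circ\tau_x = \tau_y$, the choice $L(\omega,t)=W(t)+V_1(\omega)+V_2(\tau_t(\omega))$ gives
\[
E_\omega(x,y) = L(\tau_x(\omega),\,y-x) = W(y-x) + V_1(\tau_x(\omega)) + V_2(\tau_y(\omega)).
\]
So, for a flow box $\tau[B_R\times\Xi]$ compatible with the fixed decomposition $\{U_i\}_{i\in I}$ (where $U_i=\tau[B_{R_i}\times\Xi_i]$), checking that $E_{\omega'}(x,y)=E_\omega(x,y)$ for all $\omega,\omega'\in\Xi$ and $|x|,|y|<R$ amounts to proving
\[
V_j(\tau_s(\omega)) = V_j(\tau_s(\omega'))\quad (j=1,2),\qquad \forall\,\omega,\omega'\in\Xi,\ \forall\,|s|<R,
\]
since the term $W(y-x)$ is common and one takes $s=x$ for $V_1$ and $s=y$ for $V_2$.

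The core claim is therefore: if $V:\Omega\to\mathbb{R}$ is locally transversally constant with respect to $\{U_i\}_{i\in I}$ (Definition \ref{definition:LocallyTransversallyConstant}) and $\tau[B_R\times\Xi]$ is compatible with $\{U_i\}_{i\in I}$, then $V(\tau_s(\omega))=V(\tau_s(\omega'))$ for all $\omega,\omega'\in\Xi$ and $|s|<R$. To prove it, fix such an $s$. By the compatibility condition in Definition \ref{definition:TransversallyConstant} applied with $t=s$, there exist $i\in I$, a real $t_i$ with $|t_i|<R_i$, and a clopen subset $\tilde\Xi_i\subseteq\Xi_i$ with $\tau_s(\Xi)=\tau_{t_i}(\tilde\Xi_i)$. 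Given $\omega,\omega'\in\Xi$, both $\tau_s(\omega)$ and $\tau_s(\omega')$ lie in $\tau_{t_i}(\tilde\Xi_i)$, hence $\tau_s(\omega)=\tau_{t_i}(\eta)$ and $\tau_s(\omega')=\tau_{t_i}(\eta')$ for some $\eta,\eta'\in\tilde\Xi_i\subseteq\Xi_i$. Since $|t_i|<R_i$, Definition \ref{definition:LocallyTransversallyConstant} applied in the box $U_i$ gives $V(\tau_{t_i}(\eta))=V(\tau_{t_i}(\eta'))$, that is $V(\tau_s(\omega))=V(\tau_s(\omega'))$.

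Applying the core claim to $V=V_1$ with $s=x$ and to $V=V_2$ with $s=y$, and substituting into the formula for $E_\omega(x,y)$ above, yields $E_{\omega'}(x,y)=E_\omega(x,y)$, which is precisely the defining property of a locally transversally constant Lagrangian. I do not expect a genuine obstacle: the argument is an unwinding of the definitions, and the only delicate point is the bookkeeping — ensuring that the pulled-back points $\eta,\eta'$ lie in one and the same transverse section $\Xi_i$ (they do, since $\tilde\Xi_i\subseteq\Xi_i$) and that $|t_i|<R_i$, so that Definition \ref{definition:LocallyTransversallyConstant} is legitimately invoked, together with the fact that compatibility may be invoked at the two ``times'' $t=x$ and $t=y$, both automatically in $(-R,R)$.
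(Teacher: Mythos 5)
Your proof is correct and follows essentially the same route as the paper's: unwind $E_\omega(x,y)=W(y-x)+V_1(\tau_x(\omega))+V_2(\tau_y(\omega))$, invoke compatibility of $\tau[B_R\times\Xi]$ at the two times $x$ and $y$ to pull the points back into a common clopen $\tilde\Xi_i\subseteq\Xi_i$ at a common offset $t_i$, and apply Definition~\ref{definition:LocallyTransversallyConstant} in the box $U_i$ to each of $V_1,V_2$. The only cosmetic difference is that you isolate the argument as a reusable auxiliary claim for a single function $V$, whereas the paper runs it inline for $V_1$ and cites ``similarly'' for $V_2$.
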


\begin{proof}
Assume $V_1$ and $V_2$ are locally transversally constant on a flow box decomposition $\{U_i\}_{i \in I}$. Let $\tau[B_R \times \Xi]$ be a flow box which is compatible with respect to $\{U_i\}_{i \in I}$.  If $|x|,|y|<R$ and $\omega,\omega'\in \Xi$, then
\begin{gather*}
E_{\omega}(x,y) = W(y-x)+V_{1,\omega}(x)+V_{2,\omega}(y).
\end{gather*}
There exist $i \in I$, $|t_i|<R_i$ and $\tilde\Xi_i$ a clopen subset of $\Xi_i$ such that $\tau_x(\Xi) = \tau_{t_i}(\tilde\Xi_i)$. Then $\tau_x(\omega)=\tau_{t_i}(\omega_i)$ and $\tau_x(\omega')=\tau_{t_i}(\omega_i')$ for some $\omega_i,\omega'_i \in \tilde\Xi_i$. We have
\begin{gather*}
V_{1,\omega}(x) = V_{1, \omega_i}(t_i) = V_{1, \omega'_i}(t_i) = V_{1, \omega'}(x).
\end{gather*}
Similarly $V_{2, \omega}(y) = V_{2, \omega'}(y)$. We have thus proved $E_{\omega'}(x,y) = E_\omega(x,y)$.
\end{proof}

To give a concrete example of a family of locally transversally constant Lagrangians for which the conclusions of Theorem~\ref{theorem:MainContribution} hold, let us recall that  a continuous function $V \colon \Omega \to \mathbb{R}$ is $C^{2}$ {\em along the flow} if, for each $\omega \in \Omega$, the function $x \in \mathbb{R} \mapsto V(\tau_{x}(\omega))$ is $C^{2}$.

\begin{corollary}
\label{corollary:MainContribution}
Let $(\Omega,\{\tau_t\}_{t\in\mathbb{R}})$ be an almost periodic  environment admitting a flow box decomposition. Let $V_1, V_2:\Omega\to\mathbb{R}$ be $C^0$
locally transversally constant functions (on the same flow box decomposition) that are $ C^2 $ along the flow. Let $W \colon \mathbb{R}\to\mathbb{R}$ be a
$C^2$ superlinear weakly convex function. Define
\[
L(\omega,t) = W(t)+V_1(\omega) +V_2(\tau_t(\omega)).
\]
Then $L$ is $C^0$,  superlinear, weakly twist   and  locally transversally constant. If moreover $(\Omega,\{\tau_t\}_{t\in\mathbb{R}})$ is uniquely ergodic, then  $(\Omega,\{\tau_t\}_{t\in\mathbb{R}},L)$ is an almost crystalline  interaction model  and all conclusions of theorem~\ref{theorem:MainContribution} apply.
\end{corollary}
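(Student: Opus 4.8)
The plan is to check in turn the four asserted properties of $L$ --- that it is $C^0$, superlinear, weakly twist, and locally transversally constant --- and then to deduce from Definition~\ref{definition:quasicrystal_WeaklyTwistProperty} that $(\Omega,\{\tau_t\}_{t\in\mathbb R},L)$ is almost crystalline, so that Theorem~\ref{theorem:MainContribution} applies verbatim. Continuity is immediate: $W$ is $C^2$, hence $C^0$; $V_1\in C^0(\Omega)$; and $(\omega,t)\mapsto\tau_t(\omega)$ is jointly continuous by the definition of an almost periodic environment, so $(\omega,t)\mapsto V_2(\tau_t(\omega))$ is continuous and $L=W+V_1+V_2\circ\tau$ is $C^0$.

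For superlinearity I would use compactness of $\Omega$ to set $M:=\sup_{\omega\in\Omega}(|V_1(\omega)|+|V_2(\omega)|)<+\infty$, so that $L(\omega,t)/|t|\geq W(t)/|t|-M/|t|$ for every $\omega$ and every $t\neq0$; it then suffices to show that a superlinear weakly convex $W$ satisfies $W(t)/|t|\to+\infty$ as $|t|\to+\infty$. Indeed, $W''\geq0$ a.e.\ makes $W'$ nondecreasing, and $|W'(t)|\to+\infty$ forces $W'(t)\to+\infty$ as $t\to+\infty$ and $W'(t)\to-\infty$ as $t\to-\infty$; integrating $W'$ from a threshold $s_N$ beyond which $W'\geq N$ gives $W(t)\geq W(s_N)+N(t-s_N)$ for $t\geq s_N$, hence $\liminf_{t\to+\infty}W(t)/t\geq N$, and similarly as $t\to-\infty$. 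As this holds for every $N$, $W(t)/|t|\to+\infty$, and therefore $\inf_{\omega\in\Omega}\inf_{|t|\geq R}L(\omega,t)/|t|\to+\infty$; in particular $L$ is also coercive.

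For the weakly twist property, fix $\omega$ and write $E_\omega(x,y)=W(y-x)+V_1(\tau_x(\omega))+V_2(\tau_y(\omega))$. Since $W$ is $C^2$ on $\mathbb R$ while $x\mapsto V_1(\tau_x(\omega))$ and $y\mapsto V_2(\tau_y(\omega))$ are $C^2$ (because $V_1$ and $V_2$ are $C^2$ along the flow), and each potential term depends on a single variable, $E_\omega$ is a $C^2$ function of $(x,y)$ with $\partial^2E_\omega/\partial x\partial y=-W''(y-x)$. As $W''>0$ a.e., this mixed partial is $\leq0$ everywhere and $<0$ almost everywhere; slicing in either variable yields $\partial^2E_\omega/\partial x\partial y(x,\cdot)<0$ a.e.\ and $\partial^2E_\omega/\partial x\partial y(\cdot,y)<0$ a.e., which is exactly the weakly twist condition of Definition~\ref{definition:quasicrystal_WeaklyTwistProperty}. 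Local transversal constancy of $L$ is nothing but Lemma~\ref{lemma:LocallyTransversallyConstant} applied to $V_1$, $V_2$ and $W$: since $V_1$ and $V_2$ are locally transversally constant on a common flow box decomposition, $L(\omega,t)=W(t)+V_1(\omega)+V_2(\tau_t(\omega))$ is locally transversally constant in the sense of Definition~\ref{definition:TransversallyConstant}.

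It then remains to assemble the pieces. If moreover $(\Omega,\{\tau_t\}_{t\in\mathbb R})$ is uniquely ergodic, the three defining clauses of ``almost crystalline'' in Definition~\ref{definition:quasicrystal_WeaklyTwistProperty} all hold --- unique ergodicity by hypothesis, superlinearity and the weakly twist property by the previous two paragraphs, local transversal constancy by Lemma~\ref{lemma:LocallyTransversallyConstant} --- so $(\Omega,\{\tau_t\}_{t\in\mathbb R},L)$ is an almost crystalline interaction model and Theorem~\ref{theorem:MainContribution} applies: the projected Mather set meets every orbit uniformly, and for every $\omega$ there is a calibrated configuration for $E_\omega$ with uniformly bounded jumps and at a uniformly bounded distance from the origin. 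The only step carrying genuine content is the superlinearity bookkeeping --- checking that boundedness of the potentials together with $W''>0$ a.e.\ and $|W'|\to+\infty$ really forces $W(t)/|t|\to+\infty$ --- but even this is an elementary one-paragraph argument; everything else is a short continuity or differentiation check, or a direct invocation of Lemma~\ref{lemma:LocallyTransversallyConstant}, so I do not expect a real obstacle, only organizational work.
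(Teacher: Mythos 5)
Your proposal is correct, and since the paper states Corollary~\ref{corollary:MainContribution} without an explicit proof (it is presented as a direct assembly of Definition~\ref{definition:quasicrystal_WeaklyTwistProperty}, Definition~\ref{definition:TransversallyConstant} and Lemma~\ref{lemma:LocallyTransversallyConstant}), your careful unpacking is precisely the argument the authors have in mind: continuity is the joint continuity of the flow plus continuity of $V_1,V_2,W$; superlinearity of $L$ follows from boundedness of $V_1,V_2$ on the compact $\Omega$ and the elementary fact that $W''>0$ a.e.\ together with $|W'(t)|\to+\infty$ force $W(t)/|t|\to+\infty$; the weakly twist condition reduces to $\partial^2 E_\omega/\partial x\partial y=-W''(y-x)<0$ a.e.\ in each variable with the other fixed, and $E_\omega$ is $C^2$ because each of $W(y-x)$, $V_1(\tau_x\omega)$, $V_2(\tau_y\omega)$ is $C^2$ as a function of $(x,y)$; and local transversal constancy is Lemma~\ref{lemma:LocallyTransversallyConstant}. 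Adding unique ergodicity gives all three clauses of ``almost crystalline,'' so Theorem~\ref{theorem:MainContribution} applies.
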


\paragraph{Kakutani-Rohlin tower description of the system.} 
Flow boxes are open sets obtained by taking the union of every orbits of size $R$ 
starting from any point belonging to a closed transverse Poincar\'e section.
The restricted topology on a transverse section must be special: it must admit a basis of clopen sets. 
We recall in lemma~\ref{lemma:KakutaniRohlin} how to construct a suspension with locally constant return maps  called Kakutani-Rohlin tower.
When  the flow is uniquely ergodic, we describe in the lemmas \ref{lemma:TransverseMeasure} and \ref{lemma:TransverseMeasure2} how  this Kakutani-Rohlin tower enables to characterize the  unique transverse measure associated with each transverse section.

We begin with some basic properties of systems with a flow box decomposition. 
Since the proof of the next lemma is standard, we leave it to the reader.

\begin{lemma}
\label{lemma:FlowBoxes}
Let $(\Omega,\{\tau_t\}_{t\in\mathbb{R}})$ be an almost periodic environment. Assume that the action is not periodic ($t\in\mathbb{R}\mapsto \tau_t(\omega) \in \Omega$ is injective for every $\omega\in\Omega$). Then
\begin{enumerate}
\item If $\tau[B_R\times\Xi]$ is a flow box, then there exists $R'$ such that
\[
\Omega=\tau[B_{R'}\times\Xi] = \{\tau_t(\omega) : |t|<R' \ \text{and} \ \omega\in\Xi\}.
\]
\item If $\tau[B_R\times\Xi]$ is a flow box, then $\tau:\mathbb{R}\times\Xi \to \Omega$ is open and $\tau[B_R\times\Xi']$ is again a flow box for every clopen subset $\Xi'\subset\Xi$.
\item If $\tau[B_R\times\Xi]$ is a flow box, then, for every $R'>0$ and $\omega\in\Xi$, there exists a clopen set $\Xi'\subset\Xi$ containing $\omega$ such that $\tau[B_{R'}\times\Xi']$ is again a flow box.
\item If $U=\tau[B_R\times\Xi]$ and $U'=\tau[B_{R'}\times \Xi']$ are two admissible flow boxes, if $\tau[B_{2R+2R'} \times \Xi]$ and $\tau[B_{2R+2R'} \times \Xi']$ are also flow boxes, then
\[
U\cap U' = \tau(\tilde B \times \tilde\Xi) = \tau(\tilde B' \times \tilde \Xi')
\]
for some clopen sets $\tilde \Xi$, $\tilde \Xi'$ and some open convex subsets $\tilde B \subset B_R$,  $\tilde B' \subset B_{R'}$.
\item If $\{U_i\}_{i \in I}$  is a flow box decomposition, then, for every $\omega \in \Omega$ and $R>0$, there exits a flow box $\tau [B_R \times \Xi]$, with a transverse section $\Xi$ containing $\omega$, that is compatible with respect to $\{U_i\}_{i \in I}$.
\end{enumerate}
\end{lemma}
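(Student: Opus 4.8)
The plan is to establish the five items in order, since item~2 is used in item~3 and item~4 is used in item~5, and each reduces to a compactness argument combined with the cocycle property and the zero-dimensionality of the transverse sections. For item~1, as $U=\tau[B_R\times\Xi]$ is a nonempty open set, minimality of $\{\tau_t\}$ gives $\bigcup_{t\in\mathbb{R}}\tau_t(U)=\Omega$, and compactness of $\Omega$ yields finitely many times $t_1,\dots,t_n$ with $\Omega=\bigcup_k\tau_{t_k}(U)$; then $R':=R+\max_k|t_k|$ works, because $\tau_{t_k}(U)=\{\tau_s(\omega):\omega\in\Xi,\ |s-t_k|<R\}\subseteq\tau[B_{R'}\times\Xi]$. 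For item~2, I would first prove openness of $\tau\colon\mathbb{R}\times\Xi\to\Omega$ by localising: given $(t_0,\omega_0)$, for $\varepsilon\in(0,R)$ and any clopen $\Xi_0\ni\omega_0$ one has $\tau((t_0-\varepsilon,t_0+\varepsilon)\times\Xi_0)=\tau_{t_0}\bigl(\tau((-\varepsilon,\varepsilon)\times\Xi_0)\bigr)$, and the right-hand side is open because $\tau_{t_0}$ is a homeomorphism of $\Omega$ and $\tau$ restricted to $B_R\times\Xi$ is a homeomorphism onto the open set $U$. For the second assertion, a clopen $\Xi'\subseteq\Xi$ is compact (closed in a compact space) and inherits a basis of clopen sets; since $B_R\times\Xi'$ is open in $B_R\times\Xi$, its image is open in $U$, and the restriction of the homeomorphism $\tau|_{B_R\times\Xi}$ to $B_R\times\Xi'$ is again a homeomorphism onto that image.

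Item~3 is the heart of the lemma. Fix $\omega\in\Xi$ and assume (without loss of generality) $R'>R$; the point is to thin $\Xi$ so that $\tau|_{B_{R'}\times\Xi'}$ becomes injective — once that holds it is automatically open onto its image by item~2, hence a homeomorphism, and $\Xi'$ is again a compact transverse section with a clopen basis, so $\tau[B_{R'}\times\Xi']$ is a flow box. To arrange injectivity, observe that a coincidence $\tau_{t_1}(\omega_1)=\tau_{t_2}(\omega_2)$ with $\omega_1,\omega_2\in\Xi'$, $|t_i|<R'$, $(t_1,\omega_1)\neq(t_2,\omega_2)$, forces $t_1\neq t_2$ (as $\tau_{t_1}$ is injective), say $t_1>t_2$, hence $\tau_s(\omega_1)=\omega_2$ with $s:=t_1-t_2\in(0,2R')$; moreover $s\notin(0,R)$ since $\tau|_{B_R\times\Xi}$ is injective, so $s\in[R,2R')$. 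Thus it suffices to find a clopen $\Xi'\ni\omega$ in $\Xi$ such that no $\omega'\in\Xi'$ returns to $\Xi'$ at a time in $[R,2R')$. The set $K:=\{(\omega',s)\in\Xi\times[R,2R']:\tau_s(\omega')\in\Xi\}$ is compact, the map $g(\omega',s):=(\omega',\tau_s(\omega'))$ is continuous, so $g(K)$ is a compact subset of $\Xi\times\Xi$, and non-periodicity of the action guarantees $(\omega,\omega)\notin g(K)$. Since $\Xi$ is compact metric with a clopen basis, so is $\Xi\times\Xi$, and one can separate $(\omega,\omega)$ from $g(K)$ by a basic clopen neighbourhood $\Xi_1\times\Xi_2$; then $\Xi':=\Xi_1\cap\Xi_2$ does the job.

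For item~4, if $U_i\cap U_j\neq\emptyset$, admissibility gives $\tau_{(j)}^{-1}\circ\tau_{(i)}(t,\omega)=(t-a,\tau_a(\omega))$ on $\tau_{(i)}^{-1}(U_i\cap U_j)$ with $a=a_{i,j}$; hence $(t,\omega)$ lies there iff $|t|<R_i$, $\omega\in\Xi_i$, $|t-a|<R_j$ and $\tau_a(\omega)\in\Xi_j$, and the constraints on $t$ and on $\omega$ decouple. Therefore $\tau_{(i)}^{-1}(U_i\cap U_j)=\tilde B\times\tilde\Xi$ with $\tilde B:=B_{R_i}\cap(a+B_{R_j})$ a nonempty open interval contained in $B_{R_i}$ (and $|a|<R_i+R_j$) and $\tilde\Xi:=\Xi_i\cap\tau_{-a}(\Xi_j)$, which is closed in $\Xi_i$; that $\tilde\Xi$ is also open in $\Xi_i$ is exactly where the enlarged flow boxes $\tau[B_{2R_i+2R_j}\times\Xi_i]$ and $\tau[B_{2R_i+2R_j}\times\Xi_j]$ are used, since inside them the first-return time from $\Xi_i$ to $\Xi_j$ is forced to be locally constant (equal to $a$). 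The symmetric computation from the $U_j$ side gives the expression $\tilde B'\times\tilde\Xi'$. For item~5 I would cover the compact orbit segment $\{\tau_t(\omega):|t|\le R\}$ by finitely many flow boxes $U_{i_1},\dots,U_{i_m}$ of the decomposition and, by a Lebesgue-number argument, find $-R=u_0<\dots<u_m=R$ with $\tau_{[u_{k-1},u_k]}(\omega)\subset U_{i_k}$; one then takes $\Xi$ to be a small clopen horizontal slice through $\omega$ in whichever $U_{i_k}$ contains $\tau_0(\omega)=\omega$ and shrinks it so that its forward and backward flow up to time $\pm R$ stays inside the chain $U_{i_1},\dots,U_{i_m}$. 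Admissibility (as exploited in item~4) ensures that at each overlap $U_{i_k}\cap U_{i_{k+1}}$ horizontal slices are carried to horizontal slices, so for every $|t|<R$ the slice $\tau_t(\Xi)$ is a horizontal clopen piece $\tau_{t_i}(\tilde\Xi_i)$ of some $\Xi_i$, which is precisely compatibility; that $\tau$ is injective on $B_R\times\Xi$ follows from this horizontality together with non-periodicity, as in item~3.

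I expect item~3 — thinning a transverse section to enlarge the admissible flow length — to be the main obstacle, and within it the verification that the clopen separation of $(\omega,\omega)$ from the compact return set $g(K)$ really captures every failure of injectivity on $B_{R'}\times\Xi'$. Item~5 is the natural runner-up, since it requires orchestrating items~3 and~4 along a finite chain of overlapping flow boxes while keeping the constructed section horizontal and checking it is a genuine flow box; the remaining items are routine point-set topology, which is presumably why the authors leave the proof to the reader.
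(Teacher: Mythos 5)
The paper gives no proof of this lemma --- the authors write ``Since the proof of the next lemma is standard, we leave it to the reader'' --- so there is no reference argument to compare your attempt against, and I can only assess it on its own terms. Items~1, 2 and~3 are correct and complete; in particular the heart of item~3, separating the diagonal point $(\omega,\omega)$ from the compact return set $g(K)\subset\Xi\times\Xi$ by a basic clopen rectangle $\Xi_1\times\Xi_2$ and then setting $\Xi':=\Xi_1\cap\Xi_2$, is exactly right, and the preliminary analysis showing that any failure of injectivity on $B_{R'}\times\Xi'$ produces a return to $\Xi'$ at a time in $[R,2R')$ is sound.

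In item~4 your core computation is correct: admissibility forces $\tau_{(i)}^{-1}(U\cap U') = \tilde B\times\tilde\Xi$ with $\tilde B = B_R\cap(a+B_{R'})$ and $\tilde\Xi = \Xi\cap\tau_{-a}(\Xi')$. But your stated explanation of where the enlarged flow boxes enter is misplaced: you attribute the openness of $\tilde\Xi$ to a ``locally constant return time'' argument, whereas it is automatic --- $U\cap U'$ is open, $\tau_{(i)}^{-1}$ is a homeomorphism, so $\tilde B\times\tilde\Xi$ is open in $B_R\times\Xi$, and since $\tilde B$ is a nonempty open interval, projecting a basic open box around any $(t_0,\omega_0)\in\tilde B\times\tilde\Xi$ to the second factor shows $\tilde\Xi$ is open in $\Xi$ (and it is closed as $\Xi\cap\tau_{-a}(\Xi')$, hence clopen). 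So you have not actually identified what, if anything, the extra hypothesis on $\tau[B_{2R+2R'}\times\Xi]$ and $\tau[B_{2R+2R'}\times\Xi']$ buys, and a reader of your proof would be left unsure whether it is used at all. Item~5 is only a sketch, as you yourself acknowledge: the chain-of-boxes strategy propagated by admissibility is the right framework, but the quantitative shrinking of $\Xi$ so that each $\tau_t(\Xi)$, $|t|<R$, stays a clopen horizontal slice inside a single box of the decomposition, and the verification that the resulting $\tau|_{B_R\times\Xi}$ is a homeomorphism onto an open set rather than merely injective, are both left implicit. Since the authors omit the proof, your outline is a reasonable reconstruction, but items~4 and~5 would need tightening before they could stand as a complete argument.
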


The existence of a flow box decomposition enables us to build a global transverse section of the flow with locally constant return times. We extend for an almost periodic interaction model what has been done for quasicrystals in \cite{GambaudoGuiraudPetite2006_01}.  We first define the notion of Kakutani-Rohlin tower and show that an interaction model possessing a flow box decomposition admits a Kakutani-Rohlin tower.

\begin{definition}
\label{definition:KakutaniRohlin}
Let $(\Omega,\{\tau_t\}_{t\in\mathbb{R}})$ be a one-dimensional almost periodic environment possessing a flow box decomposition $\{U_i\}_{i \in I}$. We call Kakutani-Rohlin tower a partition $\{F_\alpha\}_{\alpha \in A}$ of $\Omega$ of the form
$$F_\alpha = \tau\big([0,H_\alpha) \times \Sigma_\alpha\big) = \cup_{0\leq t<H_\alpha}\tau_t(\Sigma_\alpha)$$
for some some height $H_\alpha>0$ and some transverse section $\Sigma_\alpha$ (closed set admitting a basis of clopen subsets),
where $\tau\big( (0,H_\alpha)\times \Sigma_\alpha\big)$ is a flow box (open and homeomorphic to $ (0,H_\alpha) \times \Sigma_\alpha$), and
$ \cup_{\alpha \in A} \tau(\{H_\alpha\}\times\Sigma_\alpha) = \cup_{ \alpha \in A} \tau(\{0\}\times\Sigma_\alpha) = \cup_{\alpha \in A} \Sigma_\alpha$.
Moreover, we say that a Kakutani-Rohlin tower is compatible with respect to $\{U_i\}_{i \in I}$ if, for every $\alpha \in A$,  there exist $i\in I$, $t_i\in\mathbb{R}$ and a clopen subset $\tilde\Xi_i\subset \Xi_i$ such that $\Sigma_\alpha = \tau_{t_i}(\tilde\Xi_i)$ and
$ [t_i,t_i+H_\alpha) \subset [-R_i,R_i) $.
\end{definition}

\begin{lemma}
\label{lemma:KakutaniRohlin}
Let $(\Omega,\{\tau_t\}_{t\in\mathbb{R}})$ be a one-dimensional almost periodic environment  possessing a flow box decomposition $\{U_i\}_{i \in I}$. Then there exists  a Kakutani-Rohlin tower $\{F_\alpha\}_{\alpha \in A}$ which is compatible with respect to $\{U_i\}_{i \in I}$.
\end{lemma}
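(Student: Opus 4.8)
The plan is to produce a single compact transverse section $\Sigma\subset\Omega$ meeting every orbit, carrying a basis of clopen sets, and with a locally constant first-return time; the Kakutani--Rohlin tower is then obtained by cutting $\Sigma$ along the level sets of the return time and suspending. (If the $\mathbb R$-action is periodic, $\Omega$ is a circle and the construction is trivial, so I assume it aperiodic, which makes Lemma~\ref{lemma:FlowBoxes} available.)

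First I would fix a good finite subcover: by compactness extract a finite subfamily of $\{U_i\}_{i\in I}$ covering $\Omega$, and using item 3 of Lemma~\ref{lemma:FlowBoxes} refine each of its transverse sections into finitely many clopen subsections --- this keeps the covering of $\Omega$ since $\tau[B_R\times\cdot]$ commutes with unions --- arranging, via item 5, that each resulting flow box stays compatible with $\{U_i\}_{i\in I}$ and moreover embeds in a much larger flow box over the same section (enough vertical room for item 4 of Lemma~\ref{lemma:FlowBoxes} to apply to every pairwise overlap). After relabelling one has a finite cover $\{V_k=\tau[B_{R_k}\times\Xi_k]\}_{k=1}^M$ of $\Omega$ by flow boxes compatible with $\{U_i\}_{i\in I}$, pairwise admissible with product-type overlaps, each $\Xi_k$ a clopen slice of some original $\Xi_{i(k)}$ placed well inside $(-R_{i(k)},R_{i(k)})$. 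Next I would build a disjoint global section: $\bigcup_k\Xi_k$ meets every orbit within bounded time but overlaps itself, so I disjointify inductively in the style of the Rohlin lemma --- $\widehat\Xi_1=\Xi_1$ and, given time-separated clopen sets $\widehat\Xi_1,\dots,\widehat\Xi_k$, let $\widehat\Xi_{k+1}\subset\Xi_{k+1}$ be clopen, still capturing every point of $\Xi_{k+1}$ whose short orbit segment misses $\widehat\Xi_1\cup\dots\cup\widehat\Xi_k$, yet disjoint from the time-thickenings of those sets. Then $\Sigma:=\bigsqcup_k\widehat\Xi_k$ is compact, a finite disjoint union of clopen pieces (hence with a clopen basis), meets every orbit, and is genuinely transverse: two distinct visit times of one orbit differ by some fixed $\delta>0$ (by injectivity of $\tau$ on the enlarged flow boxes for returns to the same piece, and disjointness from time-thickenings otherwise). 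So the first-return time $\Theta\colon\Sigma\to(\delta,D]$ and return map $T=\tau_\Theta$ are well defined and bounded, and a standard Poincar\'e-section argument (using that $\Sigma$ is closed and locally a clopen slice of the central section of a flow box) shows $\Theta$ is continuous and $T$ a homeomorphism of $\Sigma$.

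The key step is proving $\Theta$ locally constant, and this is where the admissibility hypothesis is essential. Fix $\xi\in\widehat\Xi_k\subset\Xi_k$; its orbit up to $T(\xi)$ crosses a finite sequence $V_{k_0}=V_k,V_{k_1},\dots,V_{k_p}$ of the boxes above, entering and leaving each through a boundary level. Since each overlap $V_{k_m}\cap V_{k_{m+1}}$ is of product type and, by admissibility, the transition map is $(t,\omega)\mapsto(t-a_{k_m,k_{m+1}},\tau_{a_{k_m,k_{m+1}}}(\omega))$ with $a$ locally constant, there is a clopen neighbourhood $C\ni\xi$ in $\widehat\Xi_k$ on which all the constants $a$ met along the segment are constant and on which each $\tau_a$ carries the relevant clopen slice of $\Xi_{k_m}$ onto that of $\Xi_{k_{m+1}}$. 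For $\xi'\in C$ the orbit then traverses the same boxes, crosses each at the same levels, and returns to $\Sigma$ in the same piece after exactly the same time, so $\Theta(\xi')=\Theta(\xi)$. Hence $\Theta$ is locally constant; being continuous on a compact space it takes finitely many values $H_1,\dots,H_m$, and each $\Sigma_\alpha:=\Theta^{-1}(H_\alpha)$ is clopen in $\Sigma$.

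Finally I would assemble the tower: set $F_\alpha:=\tau([0,H_\alpha)\times\Sigma_\alpha)$. Since the first backward visit of $\Sigma$ from any point occurs at a time strictly below the corresponding height, $\{F_\alpha\}_\alpha$ partitions $\Omega$; $\tau((0,H_\alpha)\times\Sigma_\alpha)$ is a flow box by item 2 of Lemma~\ref{lemma:FlowBoxes} (as $\Sigma_\alpha$ is a clopen slice of an enlarged flow box and $\tau$ is injective on $[0,H_\alpha)\times\Sigma_\alpha$); and $\bigcup_\alpha\tau(\{H_\alpha\}\times\Sigma_\alpha)=\bigcup_\alpha\Sigma_\alpha=\bigcup_\alpha\tau(\{0\}\times\Sigma_\alpha)$ because $T$ is a bijection of $\Sigma$. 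Each $\Sigma_\alpha$ is, by the first step, of the form $\tau_{t_i}(\widetilde\Xi_i)$ with $\widetilde\Xi_i\subset\Xi_i$ clopen, $i\in I$, positioned with enough room that $[t_i,t_i+H_\alpha)\subset[-R_i,R_i)$, so $\{F_\alpha\}$ is compatible with $\{U_i\}_{i\in I}$. The main obstacle is the local-constancy step: the remaining steps are routine suspension/first-return bookkeeping, but lowering the return time to a locally constant function genuinely requires the admissibility hypothesis together with the product structure of overlaps, organized over an entire orbit segment; a secondary nuisance is the constant-chasing in the first step, which must simultaneously keep the finite subcover covering $\Omega$ and make every column fit inside a single original flow box.
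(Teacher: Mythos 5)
Your proposal takes a genuinely different route from the paper: you first build a global transverse section $\Sigma$ by disjointifying the central slices of a refined cover, then cut $\Sigma$ along the level sets of the first-return time $\Theta$ and suspend. The paper never constructs a global section and a return time. Instead it peels off the cover inductively: writing $V_i := \tau\big([-R_i,R_i)\times\Xi_i\big)$, at stage $i$ it uses admissibility to see that each overlap $V_i\cap V_k$ ($k<i$) has the product form $\tau\big(J_{i,k}\times\Xi_{i,k}\big)$, decomposes $V_i\setminus\bigcup_{k<i}V_k$ into a disjoint union of sub-boxes $\tau\big([c_\alpha,d_\alpha)\times\tilde\Sigma_\alpha\big)$ by intersecting the relevant clopen slices with the connected components of $[-R_i,R_i)\setminus\bigcup_{k<i}J_{i,k}$, and declares these sub-boxes to be the towers. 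The bases of the towers occur at many different levels inside each box, not only at the central level, so the tower heights $d_\alpha - c_\alpha$ are automatically at most $2R_i$ and the compatibility with $\{U_i\}_{i\in I}$ comes for free.

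That last point is where your proposal has a genuine gap. In your final step you assert that each $\Sigma_\alpha$ is ``positioned with enough room that $[t_i,t_i+H_\alpha)\subset[-R_i,R_i)$,'' but nothing in your construction controls the return time $H_\alpha$ in terms of the original box sizes $R_i$. Your section $\Sigma=\bigsqcup_k\widehat\Xi_k$ sits at (disjointified) central levels of the refined boxes $V_k$, so the first return from a point of $\widehat\Xi_k$ may travel all the way through $V_k$ and partway through a neighbouring $V_l$ before hitting the next undeleted slice; the return time is then on the order of $R_k+R_l$, which already exceeds $2\min(R_k,R_l)$. Even for the periodic case $\Omega\cong S^1$ with two overlapping arcs this fails: the tower over the centre of one arc reaches the centre of the other and fits in neither arc. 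So the collection $\{F_\alpha\}$ you build is a Kakutani--Rohlin tower, but there is no reason it should be a \emph{compatible} one, which is the whole point of the lemma. To repair the argument you would have to enrich $\Sigma$ so that its return time is always bounded by a single box traversal --- for instance by also inserting slices at the boundary levels $a_{i,k}$, $b_{i,k}$ of every overlap $V_i\cap V_k$ --- and controlling the resulting $H_\alpha$; once you do that you are essentially re-deriving the paper's inductive decomposition.

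A secondary remark: the disjointification step and the local-constancy argument for $\Theta$ are plausible as sketched, and admissibility is indeed the ingredient that makes them work, but they are compressed; if you were to write them out, you would need to verify that the intersection of a time-thickening $\tau(B_\epsilon\times\widehat\Xi_j)$ with $\Xi_{k+1}$ is clopen (this uses admissibility again), and to control the two short time intervals near $t=0$ and $t=\Theta(\xi)$ in the orbit-tracking argument. Those are fixable, but the compatibility gap above is the real obstruction.
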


\begin{proof}
Let $\{U_i\}_{i=1}^n$ be a flow box decomposition, where $U_i = \tau[B_{R_i} \times \Xi_i]$. By definition, $U_i$ is an open set of $\Omega$.
We denote $V_i := \tau \big([-R_i, R_i) \times \Xi_i\big)$. We shall build by induction on $i = 1, \ldots, n $ a collection of flow boxes
$\{\tau\big((0,H_{i,j})\times \Sigma_{i,j}\big)\}_j$ such that

-- the sets $F_{i,j} := \tau\big([0,H_{i,j})\times\Sigma_{i,j}\big)$ are pairwise disjoint,

-- $ V_i \setminus \cup_{k<i} V_k = \cup_{j} \tau\big([0,H_{i,j})\times\Sigma_{i,j}\big) = \cup_j F_{i,j} $,

-- $ \tau(\{-R_i\} \times \Xi_i) \setminus \cup_{k<i}V_k  \subset \cup_j \tau(\{0\} \times \Sigma_{i,j})$,

-- $\cup_{k<i} \tau(\{R_k\} \times \Xi_k)  \cap (V_i \setminus \cup_{k<i}V_k) \subset \cup_j \tau(\{0\} \times \Sigma_{i,j})$,

-- $\tau(\{H_{i,j}\}\times\Sigma_{i,j})\cap  \cup_{k<i} V_k  \subset \cup_{k < i} \cup_j \tau(\{0\} \times \Sigma_{k,j})$,

-- $\tau(\{H_{i,j}\}\times\Sigma_{i,j})\setminus \cup_{k<i} V_k  \subset \tau(\{R_i\} \times \Xi_i) \setminus \cup_{k<i} V_k $.

\noindent For $i=1$, we choose $H_{1,1}=2R_1$ and $\Sigma_{1,1}=\tau_{-R_1}(\Xi_1)$.  Assume  that we have built the sets
$\tau\big([0,H_{k,j})\times\Sigma_{k,j}\big)$
for every $k<i$ and $j$. Thanks to the admissibility of the flow boxes $\{U_i\}_{i\in I}$, the set  $V_i \cap V_k$, if  nonempty,  is of the form $\tau(J_{i,k}\times \Xi_{i,k})$, where $J_{i,k}=[a_{i,k},b_{i,k})$, with $-R_i \leq a_{i,k} < b_{i,k} \leq R_i$, and $\Xi_{i,k}$ is a clopen set of $\Xi_i$.
The complement $V_i \setminus V_k$ is the union of sets of the form
\[
\tau\big([-R_i,a_{i,k}) \times \Xi_{i,k}\big), \quad \tau\big([b_{i,k},R_i) \times \Xi_{i,k}\big) \quad\text{or}\quad
\tau\big([-R_i,R_i) \times (\Xi_i \setminus \Xi_{i,k})\big).
\]
Hence, $V_i \setminus \cup_{k<i} V_k$ is obtained as a disjoint union of sets $\tau\big([c_{\alpha},d_{\alpha}) \times \tilde\Sigma_{\alpha}\big)$,
where $\tilde\Sigma_{\alpha}$ is any clopen set of the form $\cap_{k<i} S_k$, with either $S_k =\Xi_{i,k}$ or $ S_k = \Xi_i \setminus \Xi_{i,k}$,
and $[c_{\alpha},d_{\alpha})$ corresponds to any connected component of $[-R_i,R_i) \, \setminus \cup_{k<i} J_{i,k}$. We next rewrite
$ \tau\big([c_{\alpha},d_{\alpha}) \times \tilde\Sigma_{\alpha}\big)$ as $ \tau\big([0,H_{i, j}) \times \Sigma_{i,j}\big)$, with $ j = j(\alpha) $,
where $\Sigma_{i, j} = \tau_{c_{\alpha}}(\tilde\Sigma_{\alpha})$ and  $H_{i,j} = d_{\alpha}- c_{\alpha}$. By construction, for all $k<i$
with $ V_i \cap \overline{V_k} \neq \emptyset $,  $ \tau(\{R_k\} \times \Xi_k)  \cap V_i  = \tau(\{b_{i,k}\} \times  \Xi_{i,k})$ and its part which is not in $\cup_{l<i}V_l$ is included into $ \cup_j \tau(\{0\} \times \Sigma_{i,j})$. Furthermore, $\tau(\{H_{i,j}\} \times \Sigma_{i,j})$ either is included into
$\tau(\{R_i\} \times \Xi_i)$ or intersects $V_k$ for some $k<i$ and therefore is included into $ \cup_{k < i} \cup_j \tau(\{0\} \times \Sigma_{k,j}) $.
\end{proof}

When a Kakutani-Rohlin tower is built, we obtain a global transverse section $\cup_{\alpha \in A}\Sigma_\alpha$ with a return time  constant on each $\Sigma_\alpha$ and equal to $H_\alpha$. We can induce on a particular section $\Sigma_{\alpha_0}$ and build a second Kakutani-Rohlin tower with larger heights. We explain in the next paragraph the notations that will be used for these successive towers.

If $\{F^0_\alpha\}_{\alpha \in A^0}$ is a Kakutani-Rohlin tower of order 0, denote  $F^0_\alpha := \tau\big([0,H^0_\alpha) \times \Sigma^0_\alpha\big)$. We say that $\Sigma^0 := \cup_\alpha \Sigma^0_\alpha$ is the basis of the tower. Let $\omega_*$ be a reference point of the base $\Sigma^0$. Consider $\alpha_0$ such that $\omega_* \in \Sigma^0_{\alpha_0}$. The construction of the tower of order~1 is done by inducing the flow on $\Sigma^1 := \Sigma^0_{\alpha_0}$. We obtain a  partition of $\Sigma^1$ given by $\{\Sigma^1_\beta\}_{\beta \in A^1}$, where $\beta = (\alpha_0,\ldots,\alpha_p)$, $p\geq 1$, $\alpha_p = \alpha_0$,
$\alpha_i \not = \alpha_0$ for $i=1,\ldots,p-1$,
\begin{equation*}
\Sigma^1_\beta = \Sigma^0_{\alpha_0} \cap \tau_{H^0_{\alpha_0}}^{-1}(\Sigma^0_{\alpha_1})\cap \ldots \cap \tau^{-1}_{H^0_{\alpha_0}+\ldots+ H^0_{\alpha_{p-1}}}(\Sigma^0_{\alpha_p}).
\end{equation*}
By minimality, there is a finite collection of such nonempty sets $\Sigma^1_\beta$. Define then
\begin{gather}
H_\beta^1 := H^0_{\alpha_0} + \ldots + H^0_{\alpha_{p-1}},
\notag \\
F^1_\beta := \tau\big([0,H^1_\beta) \times \Sigma^1_\beta\big)= \bigcup_{i=0}^{p-1} \tau\big([t_i,t_i + H^0_{\alpha_i}) \times \Sigma^0_{\alpha_i}\big),  \,\,
\text{with } t_i= \sum_{j=0}^{i-1} H^0_{\alpha_j}.
\label{equation:InducedTower}
\end{gather}
We have just obtained a new Kakutani-Rohlin tower  $\{F^1_\beta\}_{\beta \in A^1}$ of basis $\Sigma^0_{\alpha_0}$. We induced again on the section $\Sigma^1_{\beta_0}$ that contains $\omega_*$ and build the tower of order~2. We shall write $\{F^l_\alpha\}_{\alpha \in A^l}$ for the successive towers that are built using this procedure and $F^l_*$ for the tower of height $H^l_*$ whose basis $\Sigma^l_*$ contains $\omega_*$ . The preceding construction gives $\min_{\alpha \in A^{l+1}} H^{l+1}_\alpha \geq H^l_{*}$ and in particular $H^{l+1}_{*} \geq H^l_*$. It may happen that $H^l_*=H^{l+1}_* = H^{l+2}_* = \ldots$ In that case, the flow is a suspension over $\Sigma^l_*$ of constant return time $H^l_*$ (and $\Omega$ is isomorphic to $\Sigma^l_* \times S^1$). In order to exclude this situation, we split the basis $\Sigma^l_{\alpha_0}$ which contains $\omega_*$ into two disjoint clopen sets $\Sigma^l_{\alpha_0} = \Sigma^l_{\alpha'_0}  \cup \Sigma^l_{\alpha''_0}$. We obtain again a K
 akutani-Rohlin tower and we induce as before on the subset which contains $\omega_*$. If $(\Omega,\{\tau_t\}_{t\in\mathbb{R}})$ is not periodic, we may choose the splitting so that $H^{l+1}_{*} > H^l_*$ at each step of the construction.

We now assume that the flow $(\Omega, \{\tau_t\}_{t\in\mathbb{R}})$ is uniquely ergodic. Let $\lambda$ be the unique ergodic invariant probability measure. The average frequency of return times to a transverse section of a flow box measures the thickness of the section. The next lemma gives a precise definition of a family of transverse measures $\{\nu_\Xi\}_{\Xi}$ parameterized by every transverse section $\Xi$.

\begin{lemma}
\label{lemma:TransverseMeasure}
Let $(\Omega,\{\tau_t\}_{t\in\mathbb{R}})$ be an almost periodic and uniquely ergodic  environment.
Given $\Xi$ a transverse section, let $\mathcal{R}_\Xi(\omega)$ be the set of return times to $\Xi$,
\[
\mathcal{R}_\Xi(\omega) := \{ t\in\mathbb{R} : \tau_t(\omega) \in \Xi \}, \quad \forall \, \omega \in \Omega.
\]
Then, for every nonempty clopen set $\Xi' \subset \Xi$, the following limit exists uniformly with respect to $\omega \in \Omega$ and is positive:
\[
\nu_\Xi(\Xi') := \lim_{T\to+\infty} \frac{\#(\mathcal{R}_{\Xi'}(\omega)\cap B_T(0))}{\text{\rm Leb}(B_T(0))} > 0.
\]
Moreover, $\nu_\Xi$ extends to a finite and nonnegative measure on $\Xi$, called transverse measure to $\Xi$, and,
for every flow box $U = \tau[B_R\times\Xi]$,
\[
\lambda(\tau(B'\times \Xi'))=\text{\rm Leb}(B') \nu_\Xi(\Xi'), \quad \forall \, B'\subset B_R(0), \ \forall \, \Xi'\subset \Xi \quad (\text{Borel sets}).
\]
\end{lemma}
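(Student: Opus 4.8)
The plan is to reduce the existence of the limit defining $\nu_\Xi(\Xi')$ to a unique-ergodicity argument applied to the first-return map, and then to identify $\nu_\Xi$ with the disintegration of $\lambda$ transverse to the flow. First I would fix a clopen set $\Xi'\subset\Xi$ and, using item 1 of lemma \ref{lemma:FlowBoxes} (choosing $R'$ large enough that $\Omega=\tau[B_{R'}\times\Xi]$), observe that $\Xi$ is a genuine global Poincaré section with return time bounded below by some $\theta_{\min}>0$ and above by $2R'$; in particular the return-time function $\Theta_\Xi$ on $\Xi$ is well defined, bounded, and the first-return map $T_\Xi:\Xi\to\Xi$ is a homeomorphism of a space with a clopen basis. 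The counting function $\#(\mathcal R_{\Xi'}(\omega)\cap B_T(0))$ is, up to an error of size $O(1)$ coming from boundary effects at $\pm T$, equal to a Birkhoff sum $\sum_{k=0}^{N-1}\mathbf 1_{\Xi'}(T_\Xi^k(\omega_0))$ where $\omega_0=\tau_{s}(\omega)\in\Xi$ is the first return into $\Xi$ after time $-T$ and $N\sim T/\bar\Theta$; meanwhile $\mathrm{Leb}(B_T(0))=2T$.

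The key step is to show $T_\Xi$ is uniquely ergodic and to identify its invariant measure. Unique ergodicity of $(\Omega,\{\tau_t\})$ transfers to $T_\Xi$ by the classical correspondence between a uniquely ergodic flow and its return map on a section (the flow is the suspension of $T_\Xi$ under $\Theta_\Xi$): any $T_\Xi$-invariant probability $m$ on $\Xi$ produces, via $\int_\Xi\!\int_0^{\Theta_\Xi(\cdot)}(\cdot)\,dt\,dm\,/\int\Theta_\Xi\,dm$, a $\tau$-invariant probability on $\Omega$, which must be $\lambda$; since $\lambda$ determines $m$ (it is the normalized transverse marginal), $m$ is unique. Then the Birkhoff averages $\frac1N\sum_{k<N}\mathbf 1_{\Xi'}\circ T_\Xi^k$ converge to $m(\Xi')$ \emph{uniformly} in the starting point — here I use that $\mathbf 1_{\Xi'}$ is continuous on $\Xi$ (because $\Xi'$ is clopen), so unique ergodicity gives uniform convergence by the standard Krylov–Bogolyubov/uniform-ergodic-theorem argument. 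Combining this with the comparison above and $N/T\to 1/\bar\Theta$, where $\bar\Theta=\int\Theta_\Xi\,dm$, yields the uniform limit
\[
\nu_\Xi(\Xi')=\frac{m(\Xi')}{2\,\bar\Theta}>0,
\]
the positivity following from $m(\Xi')>0$, which holds because $\mathrm{supp}(m)=\Xi$ (again by unique ergodicity and minimality: the orbit of any point is dense, so it visits the open set $\Xi'$).

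Once the set function $\nu_\Xi$ is defined and finitely additive on clopen sets with the explicit formula above, extending it to a finite Borel measure on $\Xi$ is routine: the clopen sets form a basis of a compact totally-disconnected-along-the-section space, $\nu_\Xi$ is monotone and finitely additive there, and a Carathéodory/Riesz-type extension (or simply viewing $\nu_\Xi$ as the pushforward $\frac1{2\bar\Theta}m$) produces the measure. Finally, for the flow-box formula $\lambda(\tau(B'\times\Xi'))=\mathrm{Leb}(B')\,\nu_\Xi(\Xi')$, I would argue that the measure $(B',\Xi')\mapsto\lambda(\tau(B'\times\Xi'))$ on $B_R(0)\times\Xi$ is invariant under translation in the $B_R$-coordinate (because $\lambda$ is $\tau$-invariant and the flow box is a product chart on which $\tau_s$ acts by translation in the first factor, as long as one stays inside the box), hence is a product $c\cdot\mathrm{Leb}\otimes\nu_\Xi'$ for some transverse measure $\nu_\Xi'$; testing against $B'=B_R(0)$ and comparing with the definition of $\nu_\Xi$ as a return-time frequency (or directly against the ergodic theorem for $\lambda$, writing $\lambda(\tau(B'\times\Xi'))=\lim\frac1{2T}\int_{-T}^{T}\mathbf 1_{\tau(B'\times\Xi')}(\tau_t\omega)\,dt$ and recognizing the inner integral as $\mathrm{Leb}(B')$ times a count of returns to $\Xi'$) pins down $\nu_\Xi'=\nu_\Xi$.

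The main obstacle is the passage from pointwise to \emph{uniform} convergence of the return-time frequency, uniformly in $\omega\in\Omega$ rather than just in $\omega\in\Xi$: one must control the boundary discrepancy — how the window $B_T(0)$ cuts partial return intervals at both ends — uniformly, and combine it cleanly with the uniform ergodic theorem for the clopen-valued observable $\mathbf 1_{\Xi'}$ on the non-compact-time orbit. The bounded return-time property ($\theta_{\min}\le\Theta_\Xi\le 2R'$) is exactly what makes this bookkeeping uniform, so the argument is not deep, but it is the step that requires care.
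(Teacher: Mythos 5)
Your proposal is essentially correct, but it takes a genuinely different route from the paper's. The paper stays entirely at the level of the flow: it considers the empirical measures $\mu_{T,\omega}(\,\cdot\,) = \frac{1}{\mathrm{Leb}(B_T(0))}\int_{B_T(0)}\mathbf{1}_{\,\cdot\,}(\tau_s(\omega))\,ds$, notes that unique ergodicity gives $\mu_{T,\omega}(\phi)\to\lambda(\phi)$ uniformly for continuous $\phi$, and then for $U'=\tau(B'\times\Xi')$ derives a sandwich inequality
\[
\mathrm{Leb}(B')\,\frac{\#\big(B_{T-R}(0)\cap\mathcal R_{\Xi'}(\omega)\big)}{\mathrm{Leb}(B_T(0))}
\;\le\; \mu_{T,\omega}(U')
\;\le\; \mathrm{Leb}(B')\,\frac{\#\big(B_{T+R}(0)\cap\mathcal R_{\Xi'}(\omega)\big)}{\mathrm{Leb}(B_T(0))}
\]
using that return times are $2R$-separated. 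Continuous sandwiching of $\mathbf{1}_{U'}$ (possible because $\Xi'$ is clopen and $\partial B'$ can be taken negligible) then yields the uniform limit, and the product formula $\lambda(\tau(B'\times\Xi'))=\mathrm{Leb}(B')\nu_\Xi(\Xi')$ for general Borel $B'$ falls out of the same computation plus the monotone class theorem. Positivity is argued by contradiction: if $\nu_\Xi(\Xi')=0$, the open set $\tau(B_R\times\Xi')$ would have zero $\lambda$-measure, impossible for a minimal flow.

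You instead descend to the first-return map $T_\Xi$ and use unique ergodicity there, which is a clean and arguably more transparent picture. Two remarks. First, this route requires an ingredient the paper never needs: continuity of the return time $\Theta_\Xi$ (hence of $T_\Xi$), which you assert but do not verify. It does hold here, but deserves a line: lower semi-continuity comes from closedness of $\Xi$ together with the separation bound $\Theta_\Xi\ge 2R$, and upper semi-continuity from the fact that $\tau:B_R\times\Xi\to U$ is a homeomorphism, applied in a neighbourhood of $\tau_{\Theta_\Xi(\omega)}(\omega)\in\Xi\subset U$ to produce nearby return times for nearby $\omega$. Without the flow-box hypothesis this step can fail, so it should be flagged. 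Second, your identification $\nu_\Xi(\Xi')=m(\Xi')/(2\bar\Theta)$ relies on $N/T\to 1/\bar\Theta$ uniformly, which is not given for free but follows from applying the uniform ergodic theorem a second time to the (continuous, bounded) observable $\Theta_\Xi$; make that explicit. Finally, for the product formula your Fubini/translation-invariance argument is a detour compared to the paper's, but the alternative you also sketch — applying the ergodic theorem directly to $\mathbf{1}_{\tau(B'\times\Xi')}$ and reading off $\mathrm{Leb}(B')$ times a return count — is precisely what the paper does, so that part lands on the same computation.
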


\begin{proof}
Let $U=\tau[B_R\times\Xi]$ be a flow box. Let  $t_1\not=t_2$ be two return times of $\mathcal{R}_{\Xi}(\omega)$. Since $\tau$ is injective on $B_R(0)\times\Xi$,
it is straightforward that $B_R(t_1) \cap B_R(t_2) = \emptyset$. For $\omega\in\Omega$ and $ T > 0 $, consider
\[
\mu_{T,\omega}(U') = \frac{1}{\text{\rm Leb}(B_T(0))} \int_{B_T(0)} \mathbf{1}_{U'}(\tau_s(\omega)) \,ds, \quad \forall \, U' \subset\Omega
\quad (\text{Borel set}).
\]
The unique ergodicity of the action implies that, for all $ \phi \in C^0(\Omega) $, $\mu_{T,\omega}(\phi)$ converges uniformly
in $\omega$ to $\lambda(\phi)$ as $T\to+\infty$. Let $B'\subset B_R(0)$ be a Borel set and $\Xi' \subset \Xi$ be a nonempty clopen set.
For $U' = \tau(B' \times \Xi')$, notice then that
\begin{equation*}
\{ s \in \mathbb{R} : \tau_s(\omega) \in U' \} = \bigcup_{t\in\mathcal{R}_{\Xi'}(\omega)} t+B', \,\,\,
\mu_{T,\omega}(U') = \sum_{t\in\mathcal{R}_{\Xi'}(\omega)} \frac{\text{\rm Leb}(B_T(0)\cap (t+B'))}{\text{\rm Leb}(B_T(0))},
\end{equation*}
and, whenever $ T > 2R $,
\begin{equation*}
\text{\rm Leb}(B') \frac{\#(B_{T-R}(0)\cap \mathcal{R}_{\Xi'}(\omega))}{\text{\rm Leb}(B_T(0))} \leq \mu_{T,\omega}(U') \leq \text{\rm Leb}(B') \frac{\#(B_{T+R}(0)\cap \mathcal{R}_{\Xi'}(\omega))}{\text{\rm Leb}(B_T(0))}.
\end{equation*}
Moreover, clearly $\#(B_T(0)\cap \mathcal{R}_{\Xi'}(\omega)) \leq \frac{\text{\rm Leb}(B_{T+R}(0))}{\text{\rm Leb}(B_R(0))} $ and
$ \lim_{T\to+\infty} \frac{\text{\rm Leb}(B_{T+R}(0))}{\text{\rm Leb}(B_T(0))} = 1 $. Thus, if $B'$ is open in $B_R(0)$,
then $U'$ is open in $\Omega$ and
\[
\lambda(U') \leq \liminf_{T\to+\infty} \mu_{T,\omega}(U') \leq \frac{\text{\rm Leb}(B')}{\text{\rm Leb}(B_{2R}(0))}.
\]
In particular, if $B'$ is negligible, thanks to the regularity of $\text{\rm Leb}$, $\lambda(U')=0$. If $B'$ is open,
$\overline{B'} \subset B_R(0)$ and $\partial B'$ is negligible, then, for every $\epsilon>0$,  there exist nonnegative continuous functions
$\phi\leq \psi$ such that
\[
\phi \leq \mathbf{1}_{\tau(B'\times\Xi)} \leq \mathbf{1}_{\tau(\overline{B'}\times\Xi)} \leq \psi \quad\text{and}\quad \lambda(\psi-\phi)<\epsilon.
\]
Therefore, $\mu_{T,\omega}(\tau(B'\times\Xi'))$ converges uniformly in $\omega$ to $\lambda(\tau(B'\times\Xi))$ as $T\to+\infty$.

On the one hand, for all clopen set $\Xi'\subset\Xi$, $\tau(B_R(0)\times \Xi')$ is a flow box and
\[
\lim_{T\to+\infty} \frac{\#(B_{T}(0)\cap \mathcal{R}_{\Xi'}(\omega))}{\text{\rm Leb}(B_T(0))} := \nu_\Xi(\Xi') \quad(\text{exists uniformly in $\omega$}).
\]
On the other hand, for every $B'=B_{R'}(s')$, $s'\in B_R(0)$, $\|s'\|+R' < R$,
\[
\lambda(\tau(B'\times\Xi')) = \lim_{T\to+\infty} \mu_{T,\omega}(\tau(B'\times\Xi')) = \text{\rm Leb}(B')\nu_\Xi(\Xi').
\]
Hence, $\nu_\Xi$ extends to a measure on the Borel sets of $\Xi$ and by the monotone class theorem
$\lambda(\tau(B'\times\Xi')) = \text{\rm Leb}(B')\nu_\Xi(\Xi')$ for every Borel sets $B' \subset B_R(0)$ and $\Xi' \subset \Xi$.

We finally remark that $\nu_\Xi(\Xi')>0$ for every nonempty clopen set $\Xi'\subset \Xi$,
since otherwise there would exist an open set of $ \Omega $ of $\lambda$-measure zero.
\end{proof}

We come back to Kakutani-Rohlin towers of flows. Let $\{F^l_\alpha\}_{\alpha \in A^l}$ be such a tower of order $l$ and $\{F^{l+1}_\beta\}_{\beta \in A^{l+1}}$ be the subsequent tower as introduced in  (\ref{equation:InducedTower}). We recall the definition of the homology matrix as explained in lemma 2.7 of \cite{GambaudoGuiraudPetite2006_01}. For every $\alpha \in A^l$ and $\beta \in A^{l+1}$, $\beta=(\alpha_0,\ldots,\alpha_p)$, $\alpha_0=\alpha_p$, $ \alpha_i \not = \alpha_0 $ for $ i = 1, \ldots, p - 1$, we denote
\begin{equation*}
M^l_{\alpha,\beta} := \# \{ 0 \leq k \leq p-1 : \alpha_k = \alpha \}.
\end{equation*}
A flow box of order $l+1$, $\tau\big([0,H^{l+1}_\beta)  \times \Sigma^{l+1}_\beta\big)$, is obtained as a disjoint union of flow boxes of order
$l$ of the type $\tau\big([t_i, t_i + H^l_{\alpha_i}) \times \Sigma^l_{\alpha_i}\big)$. The integer $M^l_{\alpha,\beta}$ counts the number of times
a flow box of order $l+1$ indexed by $\beta$ cuts a flow box of order $l$ indexed by $\alpha$. The main result that we shall need is given by the following lemma.

\begin{lemma}\label{lemma:TransverseMeasure2}
Let $(\Omega,\{\tau_t\}_{t\in\mathbb{R}})$ be a one-dimensional almost periodic and uniquely ergodic  environment. Let $\{F^l_\alpha\}_{\alpha \in A^l}$ be a sequence of Kakutani-Rohlin towers built as in~(\ref{equation:InducedTower}).  Let $\nu^l$ be the transverse measure associated with the transverse section $\cup_{\alpha \in A^l}\Sigma^l_\alpha$. If $\nu^l_\alpha := \nu^l(\Sigma^l_\alpha)$, then
\[
\nu^l_\alpha = \sum_{\beta \in A^{l+1}} M^l_{\alpha,\beta} \nu^{l+1}_\beta.
\]
\end{lemma}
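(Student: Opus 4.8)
The plan is to compute the invariant probability measure $\lambda$ of each box $F^l_\alpha$ in two different ways: directly via Lemma~\ref{lemma:TransverseMeasure}, and by cutting $F^l_\alpha$ along the finer tower $\{F^{l+1}_\beta\}_{\beta\in A^{l+1}}$. First I would record the structural description of the inducing step contained in~\eqref{equation:InducedTower}: for $\beta=(\alpha_0,\dots,\alpha_p)\in A^{l+1}$ (with $\alpha_p=\alpha_0$ and $\alpha_i\neq\alpha_0$ for $0<i<p$) one has the partition $F^{l+1}_\beta=\bigsqcup_{i=0}^{p-1}\tau\big([t_i,t_i+H^l_{\alpha_i})\times\Sigma^{l+1}_\beta\big)$ with $t_i=\textstyle\sum_{j<i}H^l_{\alpha_j}$, and since $\tau_{t_i}(\Sigma^{l+1}_\beta)$ is a clopen subset of the transverse section $\Sigma^l_{\alpha_i}$, each slice equals $\tau\big([0,H^l_{\alpha_i})\times\tau_{t_i}(\Sigma^{l+1}_\beta)\big)\subseteq F^l_{\alpha_i}$. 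Because the boxes $F^l_{\alpha}$ are pairwise disjoint, intersecting with $F^l_\alpha$ kills all slices with $\alpha_i\neq\alpha$ and keeps exactly those with $\alpha_i=\alpha$; by the very definition of the homology matrix there are $M^l_{\alpha,\beta}$ of them, so
\[
F^l_\alpha\cap F^{l+1}_\beta=\bigsqcup_{\substack{0\le i\le p-1\\ \alpha_i=\alpha}}\tau\big([0,H^l_\alpha)\times\tau_{t_i}(\Sigma^{l+1}_\beta)\big),
\]
a disjoint union of $M^l_{\alpha,\beta}$ flow‑box slices of height $H^l_\alpha$.

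Next I would evaluate $\lambda$ on a single slice $\tau\big([0,H^l_\alpha)\times\Xi'\big)$ with $\Xi'\subseteq\Sigma^l_\alpha$ clopen. Up to the $\lambda$‑null fibre over $\{0\}$ this is an open subset of the flow box $\tau\big((0,H^l_\alpha)\times\Sigma^l_\alpha\big)$; putting the latter in standard position $\tau\big(B_{H^l_\alpha/2}\times\Xi_0\big)$ with $\Xi_0:=\tau_{H^l_\alpha/2}(\Sigma^l_\alpha)$ and applying the product formula of Lemma~\ref{lemma:TransverseMeasure} gives $\lambda\big(\tau([0,H^l_\alpha)\times\Xi')\big)=H^l_\alpha\,\nu_{\Xi_0}\big(\tau_{H^l_\alpha/2}(\Xi')\big)$. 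The key elementary remark is that $\nu_{\Xi}(\Xi')=\lim_T\#(\mathcal R_{\Xi'}(\omega)\cap B_T(0))/\mathrm{Leb}(B_T(0))$ depends only on the set $\Xi'$ and is invariant under translating $\Xi'$ along the flow, since $\mathcal R_{\tau_s\Xi'}(\omega)=\mathcal R_{\Xi'}(\omega)+s$ and the return times have gaps bounded away from $0$; hence $\nu_{\Xi_0}(\tau_{H^l_\alpha/2}(\Xi'))=\nu^l(\Xi')$. Taking $\Xi'=\Sigma^l_\alpha$ yields $\lambda(F^l_\alpha)=H^l_\alpha\,\nu^l_\alpha$; taking $\Xi'=\tau_{t_i}(\Sigma^{l+1}_\beta)$ yields $\lambda\big(\tau([0,H^l_\alpha)\times\tau_{t_i}(\Sigma^{l+1}_\beta))\big)=H^l_\alpha\,\nu^l\big(\tau_{t_i}(\Sigma^{l+1}_\beta)\big)=H^l_\alpha\,\nu^{l+1}_\beta$, where the last equality again uses the translation‑invariance of the density together with the fact that $\nu^l(\Sigma^{l+1}_\beta)$ and $\nu^{l+1}_\beta=\nu^{l+1}(\Sigma^{l+1}_\beta)$ are given by the same limiting expression (only the ambient transverse section differs).

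Finally I would combine everything: since $\{F^{l+1}_\beta\}_{\beta\in A^{l+1}}$ partitions $\Omega$, the sets $\{F^l_\alpha\cap F^{l+1}_\beta\}_\beta$ partition $F^l_\alpha$, whence
\[
H^l_\alpha\,\nu^l_\alpha=\lambda(F^l_\alpha)=\sum_{\beta\in A^{l+1}}\lambda\big(F^l_\alpha\cap F^{l+1}_\beta\big)=\sum_{\beta\in A^{l+1}}M^l_{\alpha,\beta}\,H^l_\alpha\,\nu^{l+1}_\beta,
\]
and dividing by $H^l_\alpha>0$ gives the claim $\nu^l_\alpha=\sum_{\beta\in A^{l+1}}M^l_{\alpha,\beta}\,\nu^{l+1}_\beta$. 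The only genuinely delicate points are bookkeeping ones: matching $F^l_\alpha\cap F^{l+1}_\beta$ with precisely $M^l_{\alpha,\beta}$ slices of height $H^l_\alpha$ (exactly what construction~\eqref{equation:InducedTower} and the definition of $M^l_{\alpha,\beta}$ encode), and checking that the transverse measures attached to the various sections $\Sigma^l_\alpha$, $\Sigma^{l+1}_\beta$ and to their flow‑translates are mutually consistent. I expect this consistency check to be the main, though light, obstacle, because Lemma~\ref{lemma:TransverseMeasure} is phrased relative to a fixed transverse section, and one must observe that $\nu_\Xi(\Xi')$ is intrinsically just the return‑time density of the set $\Xi'$.
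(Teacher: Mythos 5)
Your proof is correct, and it takes a recognizably different route from the paper's. The paper proves the identity directly at the level of return‑time counts: fixing $\omega\in\Xi:=\cup_\beta\Sigma^{l+1}_\beta$ with successive return times $0=t_0<t_1<\dots$, one has the combinatorial identity $\#\big(\mathcal R^l_\alpha(\omega)\cap[0,t_n)\big)=\sum_\beta M^l_{\alpha,\beta}\,\#\big(\mathcal R^{l+1}_\beta(\omega)\cap[0,t_n)\big)$, and the conclusion follows by dividing by $t_n$ and applying Lemma~\ref{lemma:TransverseMeasure}. You instead route everything through the invariant measure $\lambda$: you express $\lambda(F^l_\alpha)$ once as $H^l_\alpha\nu^l_\alpha$ via the product formula, and once again by partitioning $F^l_\alpha$ along the finer tower into $\sum_\beta M^l_{\alpha,\beta}$ slices each of measure $H^l_\alpha\nu^{l+1}_\beta$, then cancel $H^l_\alpha$. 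The combinatorial engine is identical --- $M^l_{\alpha,\beta}$ counts, equivalently, returns to $\Sigma^l_\alpha$ during one traversal of $F^{l+1}_\beta$ (paper) or slices of $F^{l+1}_\beta$ lying in $F^l_\alpha$ (you) --- but the scaffolding differs. The paper's version is shorter and stays within the language of densities; yours is more geometric, makes the role of the product measure $\lambda=\mathrm{Leb}\otimes\nu$ explicit, and has the side benefit of surfacing the two consistency facts one must check (that $\nu_\Xi(\Xi')$ is intrinsic to $\Xi'$ rather than the ambient section, and that it is invariant under $\tau_s$), which are used silently in the paper. Both are sound.
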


\begin{proof}
Let $\Xi = \cup_{\beta \in A^{l+1}} \Sigma^{l+1}_\beta$. For $\omega \in \Xi$, let $ 0 = t_0, t_1, t_2, \ldots$ be its successive return times to $\Xi$.
We introduce as in lemma~\ref{lemma:TransverseMeasure} the set of return times to the transverse section $\Sigma^l_\alpha$, say,
$\mathcal{R}^l_\alpha(\omega) := \{t \in \mathbb{R} : \tau_t(\omega) \in \Sigma^l_\alpha \}$. The set $\mathcal{R}^{l+1}_\beta(\omega)$ is defined similarly. Since
\[
\# \big(\mathcal{R}^l_\alpha(\omega) \cap [0,t_n) \big) = \sum_{\beta \in A^{l+1}}  M^l_{\alpha,\beta} \ \# \big( \mathcal{R}^{l+1}_\beta(\omega) \cap[0,t_n) \big),
\]
we divide by $t_n$ and apply lemma \ref{lemma:TransverseMeasure} to conclude.
\end{proof}

\section{Calibrated configurations for transversally constant Lagrangians}\label{sec:conf-cal}

This section is devoted to the proof  of the second main result of this paper: theorem~\ref{theorem:MainContribution}.
We consider an almost periodic environnement $(\Omega, \{\tau_{t}\}_{t\in \mathbb R})$ admitting a flow box decomposition with respect to which  $ L: \Omega \times \mathbb R \to \mathbb R $ is locally transversally constant, and we suppose the Lagrangian $ L $ is also weakly 
twist. We shall study the properties of the associated minimizing configurations. 

If $E_\omega(x,x) = \bar E$ for some $\omega \in \Omega$ and $x \in \mathbb{R}$, then $\delta_{(\tau_x(\omega),0)} \in \mathbb{M}_{min}(L)$, $\tau_x(\omega)$ belongs to the projected Mather set, and the configuration $x_{k,\omega}=x$ fulfills items \ref{item:MainContribution_1} and \ref{item:MainContribution_2} of  theorem~\ref{theorem:MainContribution}.
We thus suppose later $E_\omega(x,x) > \bar E$ for every $\omega $ and $x$.

Our first nontrivial result is stated in proposition~\ref{proposition:BoundedJumps}: a finite configuration $(x^n_0,\ldots,x^n_n)$ which realizes the minimum of the energy among all configurations of the same length must be strictly monotone, and must have uniformly bounded jumps $|x^n_k - x^n_{k-1}| \leq R$. Our second key result, proposition \ref{proposition:RotationNumber}, shows actually  that $\liminf_{n\to+\infty} \frac{1}{n} |x^n_n - x^n_0| > 0$: the frequency of points $x^n_k$ in a flow box of sufficiently large size is positive. We finally conclude this section with the proof  of theorem~\ref{theorem:MainContribution}.

\begin{lemma}
\label{lemma:StrictlyMonotoneBis}
Given a weakly twist and transversally constant Lagrangian $L$, there exists $R>0$ such that, if $\omega \in \Omega$ is any environment, if  $(x_0,\ldots,x_n) \in\mathbb{R}$ is  minimizing  for $E_\omega$ and  $|x_n-x_0|\geq R$, then $(x_0,\ldots,x_n)$ is strictly monotone.
\end{lemma}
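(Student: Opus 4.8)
The plan is to combine the ordering results of Section~\ref{sec:propcalconf} with the local transversal constancy of $L$. By Lemma~\ref{lemma:StrictlyMonotone}, a minimizing configuration is strictly monotone as soon as every intermediate point $x_i$ ($0<i<n$) lies strictly between $x_0$ and $x_n$; so the whole difficulty is to show that when $|x_n-x_0|$ is large, no intermediate point can escape the interval $[\min(x_0,x_n),\max(x_0,x_n)]$. First I would produce, via local transversal constancy and compactness of $\Omega$, a uniform radius $R_0>0$ and a uniform constant so that the ``one-step'' energy cost is controlled: using the flow box decomposition together with Lemma~\ref{lemma:FlowBoxes}, one can cover $\Omega$ by finitely many compatible flow boxes and deduce that for $|y-x|$ bounded the energy $E_\omega(x,y)$ takes only finitely many ``shapes'' up to translation, whereas by superlinearity of $L$ (Definition after Definition~\ref{notacao}) there is $R>0$ with $\inf_{\omega,\, |s|\geq R} [L(\omega,s)-\bar E] $ strictly larger than $2\big(\sup_{\omega}E_\omega(0,0) - \bar E\big)$ plus any constant needed to absorb a single ``short'' excursion.

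The core estimate is a surgery argument. Suppose $(x_0,\dots,x_n)$ is minimizing for $E_\omega$, with $x_0<x_n$ (the other case is symmetric), $x_n-x_0\geq R$, and suppose for contradiction some $x_i$ lies outside $[x_0,x_n]$. Pick the first index $i$ and the last index $j\geq i$ forming a maximal excursion outside $[x_0,x_n]$, so that $x_{i-1},x_{j+1}\in[x_0,x_n]$ while $x_i,\dots,x_j$ are outside. Replace the block $(x_{i-1},x_i,\dots,x_j,x_{j+1})$ by the constant block that stays at $x_{i-1}$ (or at the nearer endpoint), invoking Lemma~\ref{lemma:twistMonotony} applied to the nonmonotone segment: this lowers the energy by at least the sum $\sum E_\omega(x_k,x_k)$ of diagonal costs for the removed points, while the re-connection via the Aubry crossing Lemma~\ref{lemma:twistProperty} only helps (crossing terms have the favourable sign). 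Since we assumed $E_\omega(x,x)>\bar E$ and in fact $\inf_{\omega,x}[E_\omega(x,x)-\bar E]=:c_1>0$ by compactness, each removed point costs at least $c_1$; but the jump needed to leave $[x_0,x_n]$ and return, compared to the jumps of the replacement configuration, must be absorbed somewhere, and superlinearity forces one of the original jumps crossing the boundary to have length $\geq$ (something), contradicting minimality as soon as the gain $c_1$ accumulated over the excursion exceeds the (bounded) reconnection cost. Making this quantitative — balancing the number of excursion points against the jump sizes — gives the uniform $R$.

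The main obstacle I anticipate is bookkeeping the surgery so that it is genuinely \emph{uniform} in $\omega$: one must ensure the replacement configuration has the same length $n$ and endpoints (Lemma~\ref{lemma:twistMonotony} handles this by padding with diagonal points), and one must verify that the energy comparison constants coming from the Aubry crossing lemma and from superlinearity do not depend on $\omega$ or $n$ — this is exactly where local transversal constancy and the compactness of $\Omega$ (hence of the finitely many flow-box data) enter. Once the uniform bound $|x_i-x_0|,|x_i-x_n|\leq$ (diam) is established for all intermediate points, strict monotonicity is immediate from Lemma~\ref{lemma:StrictlyMonotone}, and one extracts the jump bound $|x_k-x_{k-1}|\leq R$ as a byproduct of the same superlinearity estimate (a jump of length $\geq R$ on a minimizing segment with bounded average energy is impossible).
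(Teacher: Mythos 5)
The route you sketch is genuinely different from the paper's and, as written, has a gap that I do not think can be patched by ``bookkeeping''. Your confinement-then-Lemma~\ref{lemma:StrictlyMonotone} plan is reasonable, but the surgery step is not justified. When you replace the excursion block $(x_{i-1},x_i,\dots,x_j,x_{j+1})$ by a constant block sitting at $x_{i-1}$ (and pad with diagonal points to keep length $n$), Lemma~\ref{lemma:twistMonotony} compares the original block to a monotone sub-configuration plus the diagonal costs $E_\omega(x_k,x_k)$ \emph{at the original positions $x_k$}. It gives no comparison to diagonal costs at $x_{i-1}$. There is nothing forcing $\sum_k E_\omega(x_k,x_k)\geq (j-i+1)\,E_\omega(x_{i-1},x_{i-1})$: the boundary point $x_{i-1}$ may carry a large diagonal energy and the removed points a small one, so the replacement can raise the total energy. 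This is exactly the difficulty that local transversal constancy is there to solve, and the paper uses it for a much sharper purpose than uniformity of constants: for any $x_k$ in a flow box, one can find a translate $z_0$ of $x_k$ landing inside $[x_0,x_n]$ (possible since $|x_n-x_0|\geq R$ with $R$ chosen so that every orbit of length $R$ crosses each box entirely) with $E_\omega(z_0,z_0)=E_\omega(x_k,x_k)$ \emph{exactly}. Re-inserting $r+1$ copies of $z_0$ via Aubry crossing then costs no more than $(r+1)E_\omega(x_k,x_k)$, which is strictly less than the savings, producing the contradiction. Your plan never produces this exact-equality translate, which is the heart of the argument.

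Two further issues. First, you invoke $\inf_{\omega,x}\big[E_\omega(x,x)-\bar E\big]=c_1>0$, but that is not a hypothesis of Lemma~\ref{lemma:StrictlyMonotoneBis}; it only appears later (in Lemma~\ref{lemma:UnboundedConfiguration} and Proposition~\ref{proposition:RotationNumber}), and the proof of Theorem~\ref{theorem:MainContribution} explicitly treats the complementary case $\inf E_\omega(x,x)=\bar E$. The paper's proof of this lemma uses neither this positivity nor superlinearity; the constant $R$ comes purely from compactness of the flow box cover, not from the growth of $L$. Second, even if you confined all intermediate points to $[\min(x_0,x_n),\max(x_0,x_n)]$, Lemma~\ref{lemma:StrictlyMonotone} requires them to be \emph{strictly} between $x_0$ and $x_n$; the boundary case $x_0=\dots=x_r<\dots$ (or its mirror) needs a separate argument, which the paper again handles by translating the repeated endpoint into the interior via transversal constancy and then appealing to the interior-extremum exclusion. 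In short, the paper does not argue by confining points to an interval; it rules out interior local extrema directly, using the exact translation-invariance of the energy to relocate diagonal points without any cost, and your proposal misses that mechanism.
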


\begin{proof}
Let $\{U_i = \tau[B_{R_i} \times \Xi_i] \}_{i\in I}$ be a flow box decomposition with respect to which $L$ is transversally constant. Since $ \{U_i\}_{i \in I} $ is a finite cover, we may choose $R$ large enough so that every orbit of size $R$ meets every box entirely: for every $\omega$, for every $|y-x|\geq R$, for every $i\in I$, there exists $t_i\in\mathbb{R}$ such that $(t_i - R_i, t_i + R_i) \subset [x,y]$ and $\tau_{t_i}(\omega)\in\Xi_i$.

We first show that there cannot exist $r\geq0$  and $0<k<n-r$ such that
\[
 x_k < x_{k-1}, \quad x_k=\ldots=x_{k+r} \quad\text{and}\quad x_k < x_{k+r+1}.
\]
Otherwise, Aubry crossing lemma implies that
\[
E_\omega(x_{k-1},x_k)+E_\omega(x_k,x_{k+r+1}) > E_\omega(x_{k-1},x_{k+r+1})+E_\omega(x_k,x_k).
\]
We rewrite the configuration $(x_0,\ldots,x_{k-1},x_{k+r+1},\ldots,x_n)$ as $(y_0,\ldots,y_{n-r-1})$.
Let $U_{i}$ be a flow box containing $\tau_{x_k}(\omega)$. There exists $|s|<R_i$  and $\omega' \in\Xi_i$ such that $\tau_{x_k}(\omega) = \tau_{s}(\omega')$. By the choice of $R$, there exists $t$ such that $(t-R_i,t+R_i)\subset  [x_0,x_n]$ and $\tau_{t}(\omega)\in\Xi_i$. Let $z_0= \ldots = z_r  := t+s$ and $1 \leq l \leq n-r-1$ be such that $y_{l-1}<  z_0 \leq y_l$. Using the fact that $L$ is transversally constant on $U_{i}$, we have
\begin{gather*}
E_\omega(x_k,x_k) = E_{\omega'}(s,s) = E_{\tau_{t}(\omega)}(s,s)=E_\omega(z_0,z_0).
\end{gather*}
By applying again Aubry crossing lemma, we obtain
\[
E_\omega(y_{l-1},y_l) + E_\omega(z_0,z_0) \geq E_\omega(y_{l-1},z_0) + E_\omega(z_0,y_l),
\]
(possibly with a strict inequality if $ z_0<y_l$). We have just obtained a new configuration $(y_0,\ldots,y_{l-1},z_0,\ldots, z_r,y_l,\ldots,y_{n-r-1})$ of $n$ points with a strictly lower energy, which contradicts the fact that $(x_0,\ldots,x_n)$ is minimizing.

There cannot exist similarly $r\geq0$  and $0<k<n-r$ such that
\[
 x_k > x_{k-1}, \quad x_k=\ldots=x_{k+r} \quad\text{and}\quad x_k > x_{k+r+1}.
\]
There cannot exist either a sub-configuration $(x_{k-1},x_k,\ldots,x_{k+r},x_{k+r+1})$, $ r \ge 1 $, of the form $x_{k-1}\not= x_{k+r+1}$ and $x_k=\ldots =x_{k+r}$ strictly between $x_{k-1}$ and $x_{k+r+1}$ thanks to lemma~\ref{lemma:StrictlyMonotone}. We are thus left to a configuration of the form
\[
x_0=\!\ldots\!=x_{r}< \!\ldots\! <  x_{n-r'}=\!\ldots\!=x_n \quad \text{or} \quad x_0=\!\ldots\!=x_{r} > \!\ldots\! >  x_{n-r'}=\!\ldots\!=x_n
\]
for some $r, r'\geq0$. Assume by contradiction that $x_0=x_1$ (the case $x_{n-1}=x_n$ is done similarly). Exactly as before, there exist
$U_{i}$ containing $\tau_{x_0}(\omega)$, $|s|<R_i$ and $\omega'\in\Xi_i$ such that $\tau_{x_0}(\omega)=\tau_{s}(\omega')$, as well as
there exists $t \in\mathbb{R}$ such that $(t - R_i, t + R_i) \subset  [\min\{x_0, x_n\}, \max\{x_0, x_n\}]$ and $\tau_{t}(\omega)\in\Xi_i$.
One can show in an analogous way that, whenever $z :=t+s $ belongs to $ (\min\{x_{l-1}, x_l\}, \max\{x_{l-1}, x_l\}] $ for $ 2 \le l \le n $,
$E(x_0, x_1, \ldots, x_n) \ge E(x_1,\ldots,x_{l-1},z,x_l,\ldots,x_n)$, with strict inequality if $ z < \max\{x_{l-1}, x_l\} $. Since
$ (x_0, x_1, \ldots, x_n) $ is a minimizing configuration, this implies that $ z = \max\{x_{l-1}, x_l\}  \not \in \{x_0,x_n\}$, and  $(x_1,\ldots,x_{l-1},z,x_l,\ldots,x_n)$
is a minimizing configuration.
The first part of this proof shows that this cannot happen.

The proof that $(x_0,\ldots,x_n)$ is strictly monotone is complete.
\end{proof}

\begin{proposition}
\label{proposition:BoundedJumps}
Given a weakly twist and transversally constant Lagrangian $L$, there exists $R>0$ such that, 
if $\omega\in\Omega$ is any environment and $(x_0,\ldots,x_n)$, $n\geq2$, 
satisfies $ E(x_0, \ldots, x_n) = \min_{(y_0, \ldots, y_n)} E_\omega(y_0,\ldots,y_n)$ and
$\max_{0\leq k < l \leq n}|x_k-x_l|\geq R$,  then $(x_0,\ldots,x_n)$ is strictly monotone and $\sup_{1 \leq k \leq n}|x_k - x_{k-1}| \leq R$.
\end{proposition}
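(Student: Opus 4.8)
I would take for $R$ the constant furnished by Lemma~\ref{lemma:StrictlyMonotoneBis} (enlarged if necessary) for a fixed finite flow box decomposition $\{U_i=\tau[B_{R_i}\times\Xi_i]\}_{i\in I}$ on which $L$ is transversally constant, so that every orbit segment of length $R$ sweeps entirely through each $U_i$, and every sub-segment of $(x_0,\dots,x_n)$ of diameter $\ge R$ is strictly monotone — here using that a global minimizer is a fortiori minimizing for its own endpoints, and so is each of its sub-segments. Let $p$, $q$ be indices at which $(x_k)$ attains its minimum and its maximum; since the diameter is $\ge R>0$ we have $p\neq q$, and we may assume $p<q$ (the other case is symmetric). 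Then $x_q-x_p$ equals the diameter $\ge R$, so $(x_p,\dots,x_q)$ is strictly increasing. It remains to see that $p=0$ and $q=n$: granting this, $|x_n-x_0|=x_q-x_p\ge R$, and Lemma~\ref{lemma:StrictlyMonotoneBis} applied to the whole sequence gives strict monotonicity. Suppose $p\ge1$. If $x_{p-1}>x_p$, the triple $(x_{p-1},x_p,x_{p+1})$ has a strict ``V'' shape, so Aubry crossing (Lemma~\ref{lemma:twistProperty}) applied to the edges $(x_{p-1},x_p)$ and $(x_p,x_{p+1})$ yields $E_\omega(x_{p-1},x_{p+1})-E_\omega(x_{p-1},x_p)-E_\omega(x_p,x_{p+1})<-E_\omega(x_p,x_p)$. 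Since $(x_p,\dots,x_q)$ sweeps an interval of length $\ge R$, transversal constancy provides $z\in(x_p,x_q)$ in the same transverse clopen position as $x_p$, so $E_\omega(z,z)=E_\omega(x_p,x_p)$; as $z$ lies, in value, between two configuration-consecutive sites $x_{l-1}$ and $x_l$, Aubry crossing applied to $(x_{l-1},x_l)$ and $(z,z)$ gives $E_\omega(x_{l-1},z)+E_\omega(z,x_l)-E_\omega(x_{l-1},x_l)\le E_\omega(z,z)$. Deleting the site $x_p$ and inserting the site $z$ then produces a length-$n$ configuration whose total energy has changed by less than $-E_\omega(x_p,x_p)+E_\omega(z,z)=0$, contradicting global minimality. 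The cases $x_{p-1}=x_p$, and the symmetric statements for $q\le n-1$, are handled by the same deletion/re-insertion scheme, using the second-order perturbation of the weakly twist form (as in the proof of Lemma~\ref{lemma:StrictlyMonotone}) to break flat stretches of consecutive equal points. Hence $p=0$, $q=n$.

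\textbf{Step 2: bounded jumps.} Now $(x_0,\dots,x_n)$ is strictly increasing with $x_n-x_0\ge R$; suppose $x_k-x_{k-1}>R$ for some $k$. Since $(x_0,\dots,x_n)$ minimizes the energy among length-$n$ configurations, $E_\omega(x_0,\dots,x_n)\le C_0 n$ for a constant $C_0$ depending only on $L$ (by proposition~\ref{proposition:LocalGroundEnergy}), whereas superlinearity forces the single edge $E_\omega(x_{k-1},x_k)$ to grow superlinearly in $x_k-x_{k-1}$; this bounds $x_k-x_{k-1}$ by $o(n)$ and shows, via coerciveness, that all but $o(n)$ of the edges are short. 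One then replaces the long edge $[x_{k-1},x_k]$ by a path of short edges of total energy only linear in $x_k-x_{k-1}$ (the construction that bounds the jumps of calibrated configurations in the proof of theorem~\ref{theorem:Main1}) and, to keep the number of sites fixed, deletes equally many sites sitting on short edges, each such deletion costing at most a constant by coerciveness. For $x_k-x_{k-1}$ large enough the net change is strictly negative, contradicting minimality; transversal constancy may be invoked to make the energies of the reinserted and deleted pieces match exactly, but is not essential here. Hence $\sup_{1\le k\le n}|x_k-x_{k-1}|\le R$.

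\textbf{Main obstacle.} The genuine difficulty throughout is that the number of sites is fixed while the endpoints are free, whereas all the available tools — Lemma~\ref{lemma:twistMonotony}, Aubry crossing, and the superlinear jump estimate — naturally shorten a configuration or modify its endpoints. The locally transversally constant hypothesis is precisely what lets one recycle a deleted site by re-inserting a transversally equivalent one inside the already-long monotone part of the configuration; making such substitutions simultaneously length-preserving, energy-decreasing, and uniform in $\omega$ is the delicate point, and it is here that choosing $R$ large enough for a length-$R$ orbit segment to cross every flow box is essential.
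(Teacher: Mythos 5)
Your overall strategy diverges from the paper's, and while Step 1 is in the right spirit, Step 2 has genuine gaps.

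\textbf{Step 2 is the main problem.} The paper's argument proceeds in three parts. Part 1 bounds the \emph{boundary jumps} $|x_1-x_0|$, $|x_2-x_1|$, $|x_{n-1}-x_{n-2}|$, $|x_n-x_{n-1}|$ uniformly and elementarily, by comparing the global length-$n$ minimizer against the constant configurations $(x_1,\ldots,x_1)$ and $(x_2,\ldots,x_2)$ and invoking coerciveness. Part 2 then shows a strictly monotone global minimizer has \emph{all} jumps uniformly bounded by a genuinely local argument: since $(x_0,x_1,x_2)$ has small jumps, the triple sits in a flow box $\tau[B_{2R'}\times\Xi_i']$ of fixed size; if some jump $|x_i-x_{i-1}|$ is long enough, a translated copy of that flow box fits in the gap, so one replaces the triple $(x_0,x_1,x_2)$ by $(x_0,x_2)$ and inserts a transversally equivalent point $z_1$ in the gap, preserving the length and strictly lowering the energy by two applications of Aubry crossing (the energy equalities rely on transversal constancy). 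This is one deletion and one insertion, both explicit. Your Step 2 instead attempts a many-sites-for-many-sites swap driven purely by superlinearity, and this has holes: (a) the claim that deleting a site costs ``at most a constant by coerciveness'' requires the deleted site's two incident edges to be short, and repeated deletions compound (after deleting $x_j$, the new edge $x_{j-1}x_{j+1}$ is longer, which changes the cost of a subsequent deletion of $x_{j-1}$ or $x_{j+1}$); (b) you must check that there are enough sites sitting on short edges to delete, which requires a fraction-of-edges estimate you do not carry out and which fails when $n$ is small relative to the gap length; (c) the conclusion ``$D=o(n)$'' is a statement about the behavior as $n\to\infty$, not the uniform-in-$n$ bound the proposition asks for, and the case split between large and small $n$ is never completed. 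Finally, you explicitly note that transversal constancy ``is not essential here'', which is a warning sign — it is precisely the tool the paper uses to make the replacement energy-exact and length-preserving, and dropping it requires a much more careful counting than you give. (The reference to theorem~\ref{theorem:Main1}'s jump bound is also off: that argument uses the Mañé potential and calibration, neither of which is available for a plain length-$n$ minimizer.)

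\textbf{Step 1 is closer but still incomplete.} The paper's Part 3 picks the max $b=x_m$ (or the min) when the endpoints $x_0,x_n$ are close, applies lemma~\ref{lemma:StrictlyMonotoneBis} to $(x_0,\ldots,x_m)$ and $(x_m,\ldots,x_n)$ separately to get monotonicity and (via Part 2) bounded jumps, and then derives a contradiction by noting that $(x_0,\ldots,x_{m+1})$ must also be strictly monotone, contradicting that $x_m$ is a maximum — a clean two-sided squeeze. Your approach of showing directly that the indices $p,q$ of the extrema must be $0$ and $n$ via deletion/reinsertion is plausible, but the flat-stretch cases ($x_{p-1}=x_p$ or $x_q=x_{q+1}$) and the degenerate case where the extrema occur at multiple indices are only waved at (``handled by the same scheme''), and the second-order perturbation from lemma~\ref{lemma:StrictlyMonotone} has hypotheses (``$x_i$ strictly between $x_0$ and $x_n$'') that are not automatically met when $x_p$ is a global minimum with a flat neighbor. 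These should be addressed explicitly.

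In short: look at the paper's Part 1 — the a priori bound on the boundary jumps is a cheap but decisive step, and it is what makes the single-point surgical replacement of Part 2 possible. The replacement must be one-for-one so as not to change the fixed length $n$, and it is transversal constancy (together with the Part 1 bound that the boundary triple fits inside one flow box) that makes the single inserted point an exact energetic substitute for the deleted one.
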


\begin{proof}
Consider $\omega\in\Omega$, $n\geq2$, and $(x_0,\ldots,x_n)$ realizing the minimum of the energy among all configurations of length
$n$ in the environment $\omega$.

{\it Part 1.} We show there exists $R'>0$  (independent from $\omega$ and $n$) such that $|x_1-x_0| \leq R'$ and $|x_2-x_1| \leq R'$.
Indeed, we have
\begin{equation*}
E_\omega(x_0,x_1) \leq E_\omega(x_1,x_1) \quad\text{and}\quad E_\omega(x_0,x_1,x_2) \leq E_\omega(x_2,x_2,x_2),
\end{equation*}
which implies
\begin{equation*}
E_\omega(x_0,x_1) \leq \sup_{x\in\mathbb{R}} E_\omega(x,x) \quad\text{and}\quad E_\omega(x_1,x_2) \leq 2\sup_{x\in\mathbb{R}} E_\omega(x,x)-\inf_{x,y\in\mathbb{R}} E_\omega(x,y).
\end{equation*}
The existence of $R'$ follows then from the coerciveness of $L$, which is uniform with respect to $\omega$. Similarly,
we have $|x_{n-1} - x_{n-2}| \leq R'$ and $|x_n-x_{n-1}| \leq R'$.

{\it Part 2.} We show there exists $R''>0$ such that, if $(x_0,\ldots,x_m)$ is strictly monotone, then $|x_i-x_{i-1}| \leq R''$ for every $1\leq i\leq m$.
It is clear from the definition that, if $ L $ is transversally constant with respect to a particular flow box decomposition
$ \{\tau[B_{r_i} \times \Xi_i]\} $,
then $ L $  is transversally constant for any flow box decomposition such that its flow boxes are compatible with respect to $ \{\tau[B_{r_i} \times \Xi_i]\} $.
Therefore, let $\{U_i=\tau[B_{R'}\times  \Xi'_i]\}$ be a finite cover of $\Omega$ by flow boxes such that $\tau[B_{2R'}\times\Xi'_i]$ is again a flow box and
$L$ is transversally constant with respect to $\{\tau[B_{2R'}\times\Xi'_i]\}$. We choose $R''>0$ large enough so that every orbit of length $R''$ meets entirely each $\tau[B_{2R'}\times\Xi'_i]$. Let $U_i$ be a flow box containing $\tau_{x_1}(\omega)$: there exist $|s_1|<R'$ and $\omega'\in\Xi'_i$ such that $\tau_{x_1}(\omega)=\tau_{s_1}(\omega')$. From part 1, we deduce that $\tau[B_{2R'}\times\Xi'_i]$ contains $\{\tau_{x_0}(\omega),\tau_{x_1}(\omega),\tau_{x_2}(\omega)\}$. Denote  $s_0:=s_1+x_0-x_1$ and $s_2:=s_1+x_2-x_1$, so that 
$|s_0|,|s_2|<2R'$, $\tau_{x_0}(\omega)=\tau_{s_0}(\omega')$ and $\tau_{x_2}(\omega)=\tau_{s_2}(\omega')$. 
Assume by contradiction $|x_i-x_{i-1}|>R''$. Then, there exists $t\in\mathbb{R}$ such that
$(t-2R',t+2R') \subset [\min\{x_{i-1},x_i\}, \max\{x_{i-1},x_i\}]$ and $\tau_t(\omega)\in \Xi'_i$. Let $z_0=t+s_0$, $z_1=t+s_1$ and $z_2=t+s_2$. Notice that $(x_{i-1},x_i)$ and $(z_0,z_1,z_2)$ are ordered in the same way. As $L$ is transversally constant on $\tau[B_{2R'}\times\Xi'_i]$, we obtain
\[
E_\omega(x_0,x_1,x_2) = E_{\omega'}(s_0,s_1,s_2) = E_{\tau_t(\omega)}(s_0,s_1,s_2) = E_\omega(z_0,z_1,z_2).
\]
Aubry crossing lemma applied twice gives
\begin{align*}
E_\omega(x_{i-1},x_i) + E_\omega(z_0,z_1,z_2) &> E_\omega(x_{i-1},z_1) + E_\omega(z_0,x_i) + E_\omega(z_1,z_2), \\
&>  E_\omega(x_{i-1},z_1,x_i)  + E_\omega(z_0,z_2).
\end{align*}
As $L$ is transversally constant, $E_\omega(z_0,z_2)=E_\omega(x_0,x_2)$ as above and we obtain
\[
E_\omega(x_{i-1},x_i) + E_\omega(x_0,x_1,x_2) > E_\omega(x_{i-1},z_1,x_i)  + E_\omega(x_0,x_2).
\]
The configuration $(x_0,x_2,\ldots,x_{i-1},z_1,x_i,\ldots,x_m)$ has a strictly lower energy,
which contradicts the fact that $(x_0,\ldots,x_m)$ is minimizing. We obtain similarly that,
if $(x_{m},\ldots,x_n)$ is strictly monotone, then $|x_{i-1}-x_i| \leq R''$ for every $m+1 \leq i \leq n$.

{\it Part 3.} Let  $R'''$ be the constant given by lemma \ref{lemma:StrictlyMonotoneBis}. Take $R>2R''+4R'''$. If $|x_n-x_0|>R'''$,
then $(x_0,\ldots,x_n)$ is strictly monotone by lemma \ref{lemma:StrictlyMonotoneBis} and the jumps $|x_i-x_{i-1}|$ are uniformly bounded by $R''$. The proof is finished.

Assume by contradiction that  $|x_n-x_0| \leq R'''$. Let $a=\min_{0\leq k \leq n}x_k$ and $b= \max_{0\leq k \leq n}x_k$. Since $\text{\rm diam}(\{x_k:0\leq k \leq n\}) \geq R$, one of the two inequalities $|a-x_0|>R/2$ or $|b-x_0|>R/2$ must be satisfied. Assume to simplify $|b-x_0|>R/2$ (the case $|a-x_0|>R/2$ is done similarly). Hence, $b=x_m$ for some $0< m < n$. Since $(x_0,\ldots,x_m)$ and $(x_m,\ldots,x_n)$ are minimizing and satisfy $|x_m-x_0|>R'''$ and $|x_m-x_n|>R'''$, these two configurations are strictly monotone. Then, part~2 tells us that the jumps $|x_i-x_{i-1}|$ are uniformly bounded by $R''$. In particular,
$|x_{m+1}-x_m| \leq R''$. The configuration $(x_0,\ldots,x_{m+1})$ is minimizing and, since $|x_m-x_0|>R''+2R'''$, it satisfies $|x_{m+1}-x_0|>R'''$. By lemma \ref{lemma:StrictlyMonotoneBis}, it must be strictly monotone, which is in contradiction with the maximum $x_m$. 

Thus, $ |x_n - x_0| > R''' $, $ (x_0, \ldots, x_n) $ is strictly monotone and  $|x_i-x_{i-1}| \leq R''$.
\end{proof}

The proof of the fact that $|x_k-x_{k-1}|$ is uniformly bounded uses the same ideas as in lemma 3.1 of \cite{GambaudoGuiraudPetite2006_01}.
The fact that $L$ is transversally constant enables us to translate subconfigurations without modifying the total energy.
For a minimizing and strictly monotone configuration, by minimality of the energy, two consecutive points cannot enclose a translated
subconfiguration of three points. More precisely, we have the following lemma that extends lemma 3.2 of  \cite{GambaudoGuiraudPetite2006_01}.

\begin{lemma}
\label{lemma:EquiDistribution}
Let $L$ be a weakly twist Lagrangian which is transversally constant for a flow box decomposition $\{U_i\}_{i \in I}$. Suppose that the flow box $\tau[B_R\times\Xi]$ is compatible with respect to $\{U_i\}_{i \in I}$. Let $(x_0,\ldots,x_n)$ be a strictly monotone minimizing configuration for some environment $\omega \in \Omega$. Let $(a-R,a+R)$ and $(b-R,b+R)$ be two disjoint intervals such that $\tau_a(\omega) \in \Xi$ and $\tau_b(\omega) \in \Xi$. Assume that $(a-R,a+R)$ is a subset of $[x_0,x_n]$. Let $A$ be the number of sites $0 \leq k \leq n$ such that $x_k $ belongs to $ (a-R,a+R)$  and let
$B$ be defined similarly. Then $ B \leq A+ 2$. In particular, if  $(b-R,b+R) \ \subset [x_0,x_n]$, then $|A-B| \leq 2$.
\end{lemma}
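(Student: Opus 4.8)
The plan is to argue by contradiction: if $B \geq A+3$, then the window $(b-R,b+R)$ contains at least three more points of the configuration than $(a-R,a+R)$ does, and the transversal constancy will let us \emph{relocate} a block of points from near $b$ to near $a$ without raising the energy, contradicting minimality. First I would set up coordinates inside the flow box. Since $\tau_a(\omega)\in\Xi$ and $\tau_b(\omega)\in\Xi$, both $\tau_{a+s}(\omega)$ for $|s|<R$ and $\tau_{b+s}(\omega)$ for $|s|<R$ lie in the flow box $\tau[B_R\times\Xi]$, with the same transverse fiber over $\tau_a(\omega)$ and $\tau_b(\omega)$ respectively; crucially, because the flow box is \emph{compatible} with the given decomposition and $L$ is transversally constant, for every finite sub-configuration $(s_0,\dots,s_m)$ with $|s_j|<R$ one has $E_\omega(a+s_0,\dots,a+s_m) = E_\omega(b+s_0,\dots,b+s_m)$ — translating a sub-configuration from the $a$-window to the $b$-window (or back) costs nothing in the ``interior'' interaction energies. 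I would record this translation-invariance identity as the workhorse of the proof.

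Next I would do the counting. Let $x_{p},\dots,x_{p+B-1}$ be the (consecutive, by strict monotonicity) indices of the points lying in $(b-R,b+R)$, and $x_{q},\dots,x_{q+A-1}$ those in $(a-R,a+R)$, with the two index-blocks disjoint since the intervals are disjoint. Using the transversal-constancy identity, I remove a sub-block of $B-A-2$ points from the $b$-window — specifically, translate the portion $x_{p+A+1},\dots,x_{p+B-2}$ by the vector $a-b$ so it is inserted into the $a$-window — and I then glue the remaining chain back together across the two ``seams'' created near $b$ and near $a$ using Aubry's crossing lemma (Lemma~\ref{lemma:twistProperty}) and the strict monotonicity, exactly as in the proof of Proposition~\ref{proposition:BoundedJumps}: each re-gluing of a monotone chain over a point that has been removed strictly lowers (or does not raise) the interaction energy. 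The net effect is a new configuration with the same endpoints $x_0,x_n$ and strictly smaller total energy — here I must be careful that at least one of the crossing-lemma applications is strict, which holds because after removing $\geq 1$ interior point the remaining neighbors straddle it strictly. This contradicts the minimality of $(x_0,\dots,x_n)$, so $B\leq A+2$; applying the same argument with the roles of $a$ and $b$ exchanged (legitimate once $(b-R,b+R)\subset[x_0,x_n]$ too) gives $A\leq B+2$, hence $|A-B|\leq 2$.

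The main obstacle I anticipate is the bookkeeping at the two seams: when a block of points is excised from the $b$-window, the point $x_{p+A}$ (the last kept point of the excised region's left neighbor) and the point $x_{p+B-1}$ must be reconnected, and the inserted block must be correctly interleaved among the points already in the $a$-window without destroying strict monotonicity — one needs the two windows to be ``far apart'' (disjoint, with a gap) so that the translated block lands cleanly inside $(a-R,a+R)$ and keeps the chain strictly monotone. The loss of exactly $2$ (rather than $0$) in the inequality $B\leq A+2$ comes precisely from the two boundary sites of a window that may fall outside the matching interval on the other side; making this ``$+2$'' sharp and not accidentally ``$+1$'' or ``$+3$'' requires tracking which endpoints of the open interval $(b-R,b+R)$ are hit, and this is where I would spend the most care. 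The twist/crossing estimates themselves and the translation identity are routine given the earlier lemmas.
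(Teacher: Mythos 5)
The high-level plan (contradiction, translate points across windows via transversal constancy, use Aubry crossing to drop the energy) matches the paper's strategy, but the concrete mechanism you propose differs from the paper's and, as stated, has a real gap.

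The paper does \emph{not} excise a block of $B-A-2$ points from the $b$-window and re-insert it into the $a$-window. Instead it introduces the translated points $z_k := a + (y'_k - b)$ and observes that since $B \geq A+3$, the $A+3$ points $z_1,\dots,z_{A+3}$ must distribute among only $A+1$ half-open cells $(y_{k-1},y_k]$. By pigeonhole, either one cell holds three $z$'s (Case~1) or two distinct cells each hold two $z$'s (Case~2). The modification that contradicts minimality then moves \emph{exactly one} interior point: one $y'_i$ is deleted from the $b$-window while one $z_i$ is inserted between $y_{k-1}$ and $y_k$, and the strict energy decrease is extracted from one specific crossing pair located by the pigeonhole (with an $\epsilon$-perturbation in Case~2 to handle the degenerate situation $z_i = y_k$, using that $(z_{i-1},z_i,z_{i+1})$ is a second-order critical point). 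Your block-move does keep the site count fixed, but the central claim that ``each re-gluing of a monotone chain over a point that has been removed strictly lowers (or does not raise) the interaction energy'' is not true and is not what the crossing lemma says: deleting an interior point of a strictly monotone minimizing chain and closing the gap has no sign a priori. The crossing lemma only controls the energy change when you swap endpoints of a genuinely crossing pair of bonds, and the paper's pigeonhole step exists precisely to manufacture such a pair. Without that step, nothing forces your two ``seam'' reconnections to be jointly energy-nonincreasing, and the strict inequality you need is unaccounted for.

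Concretely, the missing ideas are (i) the pigeonhole count that locates a cell $(y_{k-1},y_k]$ containing enough $z$'s to produce a genuine (or perturbed) crossing, and (ii) the $\epsilon$-perturbation in the boundary-coincidence case $z_i=y_k$, which uses that interior minimality makes the first-order variation vanish so the $O(\epsilon)$ gain from the crossing term dominates. Both are essential; the ``+2'' slack you worry about is exactly what makes the pigeonhole work ($A+3$ points into $A+1$ cells), not an artifact of window endpoints. As it stands the proposal has the right ingredients but omits the argument that makes them fit together.
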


\begin{proof}
To simplify we assume that $(x_0,\ldots,x_n)$ is strictly increasing. The proof is done by contradiction by  assuming $B \geq A+3$. Denote
\begin{gather*}
\{y_1,\ldots,y_A\} := \{x_0,\ldots,x_n\}\cap (a-R,a+R) \quad\text{and}\\
\{y'_1,\ldots,y'_B\} := \{x_0,\ldots,x_n\}\cap (b-R,b+R).
\end{gather*}
Let $y_0$ be the greatest $x_k \leq a-R$ and $y_{A+1}$ be the smallest $x_k \geq a+R$.
We write $s_k := y'_k - b $ and $z_k := a+s_k$ for $k=1,\ldots,B$.
The partition into $A+1$ disjoint  intervals  $\cup_{k=1}^{A+1} \, (y_{k-1},y_k]$
must contain $ A+3 $ distinct points $\{z_1,\ldots,z_{A+3}\}$. We have therefore to consider two cases.

{\it Case 1.} Either some interval $ (y_{k-1},y_k] $, $2\leq k \leq A$, contains three points $(z_{i-1},z_i,z_{i+1})$.  By Aubry crossing lemma,
\begin{align*}
E_\omega(y_{k-1},y_k) + E_\omega(z_{i-1},z_i) &> E_\omega(y_{k-1},z_i) + E_\omega(z_{i-1},y_k), \\
E_\omega(z_{i-1},y_k) + E_\omega(z_i,z_{i+1}) &\geq E_\omega(z_{i-1},z_{i+1}) + E_\omega(z_{i}, y_k).
\end{align*}
Since $L$ is transversally constant on $\tau[B_R\times \Xi]$, we obtain
\begin{align*}
E_\omega(y'_{i-1},y'_i,y'_{i+1}) + E_\omega(y_{k-1},y_k) &= E_\omega(z_{i-1},z_i,z_{i+1}) + E_\omega(y_{k-1},y_k) \\
&> E_\omega(z_{i-1},z_{i+1}) + E_\omega(y_{k-1},z_i,y_k) \\
&= E_\omega(y'_{i-1},y'_{i+1}) +  E_\omega(y_{k-1},z_i,y_k).
\end{align*}
We have obtained a configuration (if, for instance, $b<a$) of the form 

\[
(x_0,\ldots,y'_{i-1},y'_{i+1},\ldots, y'_B,\ldots,y_1,\ldots,y_{k-1},z_i,y_k,\ldots,x_n)
\] 
with strictly lower energy, which contradicts the fact that $(x_0,\ldots,x_n)$ is minimizing.

{\it Case 2.} Or there exist two distinct intervals $(y_{k-1},y_k]$ and $(y_{l-1},y_l]$, with $2\leq k<l \leq A$, that contain each two points
$(z_{i-1},z_i)$ and $(z_{j-1},z_j)$, respectively. Notice that we may have $y_k = y_{l-1}$, but we must have $z_i < z_{j-1}$,
$ z_{i+1} \in \, (a-R,a+R) $, and possibly $z_{i+1}=z_{j-1}$. We want to obtain a contradiction by showing that one can decrease the sum of energies
$ E_\omega(y'_{i-1},\ldots,y'_j) + E_\omega(y_{k-1}, \ldots,y_l) $ while fixing the four boundary points.

In the case $z_i=y_k$, we perturb the point $ z_i $ slightly
by a small quantity $\epsilon$ and allow an increase of the energy of order $\epsilon^2$. Since $(z_{i-1},z_{i},z_{i+1})$  is minimizing, we have
\begin{gather*}
E_\omega(z_{i-1},z_{i},z_{i+1}) = E_\omega(z_{i-1},z_{i}-\epsilon,z_{i+1}) + o(\epsilon^2).
\end{gather*}
By Aubry crossing lemma, either $z_i < y_k$, and the reminder in lemma \ref{lemma:twistProperty} takes the form
\begin{gather*}
\text{reminder} := (z_{i-1}-y_{k-1})(z_i-y_k)\alpha > 0, \\
\text{where}\quad \alpha = \frac{1}{(z_{i-1}-y_{k-1})(z_i-y_k)} \int_{y_{k-1}}^{z_{i-1}} \int_{y_k}^{z_i} \frac{\partial^2   E_\omega}{\partial x\partial y} (x,y) \, dy dx < 0,
\end{gather*}
(in that case, we define $\epsilon :=0$), or $z_i=y_k$, and the reminder becomes
\begin{gather*}
\text{reminder} := -\epsilon(z_{i-1}-y_{k-1})\alpha + o(\epsilon) > o(\epsilon^2), \\
\text{where}\quad \alpha = \frac{1}{z_{i-1}-y_{k-1}} \int_{y_{k-1}}^{z_{i-1}}\! \frac{\partial^2  E_\omega}{\partial x\partial y} (x,y_k) \, dx <0.
\end{gather*}
In both cases, 
\begin{gather*}
E_\omega(y_{k-1},y_k) + E_\omega(z_{i-1},z_i-\epsilon)  = E_\omega(y_{k-1},z_i-\epsilon) + E_\omega(z_{i-1},y_k)  + \text{reminder}, \\ 
E_\omega(y_{k-1},y_k) + E_\omega(z_{i-1},z_{i},z_{i+1}) > E_\omega(y_{k-1},z_i-\epsilon, z_{i+1}) + E_\omega(z_{i-1},y_k).
\end{gather*}
Again by Aubry crossing lemma,
\[
E_\omega(y_{l-1},y_l) + E_\omega(z_{j-1},z_j) \geq E_\omega(y_{l-1},z_j) + E_\omega(z_{j-1},y_l),
\]
with possibly equality if $ z_j = y_l $. Since $L$ is transversally constant, we obtain
\begin{align*}
E_\omega(y'_{i-1},\ldots,y'_j)  &+ E_\omega(y_{k-1}, \ldots,y_l) \\
&= E_\omega(z_{i-1},\ldots,z_j)  + E_\omega(y_{k-1}, \ldots,y_l)\\
&> E_\omega(z_{i-1},y_k,\ldots,y_{l-1},z_j) + E_\omega(y_{k-1},z_i-\epsilon,z_{i+1},\ldots,z_{j-1},y_l) \\
&= E_\omega(y'_{i-1},w_k,\ldots,w_{l-1},y'_j) + E_\omega(y_{k-1},z_i-\epsilon,z_{i+1},\ldots,z_{j-1},y_l),
\end{align*}
with $t_k := y_k - a$, $w_k := b + t_k$,\ldots,$ t_{l-1}:= y_{l-1} - a $, $w_{l-1} := b+t_{l-1}$. Hence, we have a 
configuration $(\ldots,y'_{i-1},w_k,\ldots,w_{l-1},y'_j,\ldots,y_{k-1},z_i-\epsilon,z_{i+1},\ldots,z_{j-1},y_l,\ldots)$ 
with strictly lower energy,
which contradicts the fact that $(x_0,\ldots,x_n)$ is minimizing.
\end{proof}

It may happen that $E_\omega(x,x)=\bar E$ for some $\omega\in\Omega$ and $x \in\mathbb{R}$. Let $x_n=x$ for every $n$.
Then $(x_n)_{n\in\mathbb{Z}}$ is a calibrated configuration in the environment $\omega$ and $\delta_{(\tau_{x}(\omega),0)}$
is a minimizing measure. If $L$ is transversally constant on a flow box $\tau[B_R\times \Xi]$ such that $\tau_{x}(\omega)\in\Xi$,
then $\delta_{(\omega',0)}$ is a minimizing measure  for every $\omega'\in\Xi$. The projected Mather set contains $\Xi$ and  theorem \ref{theorem:MainContribution} is proved. We are thus left to understand the case
$\inf_{\omega\in\Omega, \ x\in\mathbb{R}}E_\omega(x,x) > \bar E$.

\begin{lemma}
\label{lemma:UnboundedConfiguration}
Let $L$ be a weakly twist Lagrangian for which
\[
\inf_{\omega\in\Omega, \ x\in\mathbb{R}} E_\omega(x,x) > \bar E.
\]
For every  $\omega\in\Omega$, $n\geq1$,  if $(x^n_0,\ldots,x^n_n)$ is  a configuration realizing the minimum  $E_\omega(x^n_0,\ldots,x^n_n) = \min_{x_0,\ldots,x_n\in\mathbb{R}} E_\omega(x_0,\ldots,x_n)$, then $\lim_{n\to+\infty} |x^n_n-x^n_0|=+\infty$.
\end{lemma}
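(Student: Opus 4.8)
The plan is to argue by contradiction. Set $\delta:=\inf_{\omega,x}E_\omega(x,x)-\bar E>0$ and $c_0:=\inf_{\omega,x,y}E_\omega(x,y)$, which is finite since $L$ is coercive. As $a_n:=\inf_{x_0,\dots,x_n}E_\omega(x_0,\dots,x_n)$ is superadditive in $n$, Fekete's lemma gives $a_n\le n\bar E$; hence any configuration realizing the minimum satisfies $E_\omega(x^n_0,\dots,x^n_n)\le n\bar E$. If the conclusion of the lemma failed, there would be a constant $C>0$ and infinitely many indices $n$ with $|x^n_n-x^n_0|\le C$, and the aim is to bound every such $n$ by a constant independent of $n$, a contradiction.

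Fix such an $n\ge2$ and write $x_k=x^n_k$. First I would check that $x_0\ne x_n$: a configuration with equal endpoints is either constant, hence of energy $\ge n(\bar E+\delta)>n\bar E$, or non-monotone, in which case lemma~\ref{lemma:twistMonotony} bounds its energy below by $\sum_{i=0}^{n-1}E_\omega(x_i,x_i)\ge n(\bar E+\delta)>n\bar E$; both contradict $E_\omega(x_0,\dots,x_n)\le n\bar E$. Since $x_0\ne x_n$, I would extract a strictly monotone subconfiguration $(x_{i_0},\dots,x_{i_r})$ with $i_0=0$ and $i_r=n$ — this is precisely lemma~\ref{lemma:twistMonotony} when $(x_0,\dots,x_n)$ is non-monotone, and it is obtained by deleting repeated consecutive values when $(x_0,\dots,x_n)$ is monotone — in such a way that
\[
E_\omega(x_0,\dots,x_n)\ \ge\ E_\omega(x_{i_0},\dots,x_{i_r})+\sum_{i\notin\{i_0,\dots,i_r\}}E_\omega(x_i,x_i)\ \ge\ E_\omega(x_{i_0},\dots,x_{i_r})+(n-r)(\bar E+\delta),
\]
the $n-r$ deleted sites each costing at least $\bar E+\delta$.

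The core of the argument is a lower bound for $E_\omega(x_{i_0},\dots,x_{i_r})$ exploiting that this strictly monotone subconfiguration has total length $\sum_{j=1}^r|x_{i_j}-x_{i_{j-1}}|=|x_n-x_0|\le C$. Fix once and for all $\eta\in(0,1]$ so small that $m(\eta):=\sup\{\,|L(\theta,s)-L(\theta,0)|:\theta\in\Omega,\ |s|\le\eta\,\}<\delta/2$, possible by uniform continuity of $L$ on the compact set $\Omega\times[-1,1]$. At most $C/\eta$ of the increments $s_j:=x_{i_j}-x_{i_{j-1}}$ satisfy $|s_j|>\eta$; for each remaining index $E_\omega(x_{i_{j-1}},x_{i_j})=L(\tau_{x_{i_{j-1}}}(\omega),s_j)\ge E_\omega(x_{i_{j-1}},x_{i_{j-1}})-m(\eta)\ge\bar E+\delta/2$, while the other increments cost at least $c_0$. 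Summing and using $r\le n$,
\[
E_\omega(x_{i_0},\dots,x_{i_r})+(n-r)(\bar E+\delta)\ \ge\ n\bar E+\tfrac{n\delta}{2}-\tfrac{C}{\eta}\big|\bar E+\tfrac{\delta}{2}-c_0\big|.
\]
By the previous paragraph the left-hand side is at most $E_\omega(x_0,\dots,x_n)\le n\bar E$, so $\tfrac{n\delta}{2}\le\tfrac{C}{\eta}\big|\bar E+\tfrac{\delta}{2}-c_0\big|$; this bounds $n$ by a constant depending only on $C,\eta,\delta,\bar E,c_0$, which is the desired contradiction and proves the lemma.

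The only delicate point in this plan is the uniform extraction of the strictly monotone subconfiguration carrying the clean accounting inequality of the second paragraph — reconciling the monotone, non-monotone and equal-endpoint cases into a single inequality of the stated form, which is where the weakly twist hypothesis enters, through lemma~\ref{lemma:twistMonotony}. Everything after that is an elementary counting estimate for the increments of a bounded monotone sequence together with the uniform continuity of $L$ near $t=0$.
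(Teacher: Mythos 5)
Your proof is correct and follows essentially the same route as the paper's: extract a monotone subconfiguration via lemma~\ref{lemma:twistMonotony}, note that the bounded total displacement $|x_n^n-x_0^n|\le C$ forces all but finitely many jumps of the subconfiguration to be small, and then use the gap $\inf_{\omega,x}E_\omega(x,x)>\bar E$ together with uniform continuity of $L$ near $t=0$ to bound the small-jump edges below by $\bar E+\text{const}>0$, yielding a contradiction with $E_\omega(x_0^n,\ldots,x_n^n)\le n\bar E$. The paper packages the last step by choosing $\epsilon$ directly so that $E_\omega(x,y)\ge\bar E+\epsilon$ whenever $|y-x|\le\epsilon$, whereas you split this into a gap $\delta$ and a modulus $\eta$, and you spell out the monotone and equal-endpoint cases that the paper leaves implicit; these are only cosmetic differences.
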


\begin{proof}
The proof is done by contradiction. Let $\omega\in\Omega$ and $R>0$. Assume there exist infinitely many $n$'s for which every configuration $(x^n_0,\ldots,x^n_n)$ realizing the minimum of $E_\omega(x_0,\ldots,x_n)$ satisfies $|x^n_n-x^n_0|\leq R$. If $(x^n_0,\ldots,x^n_n)$ is not monotone,  thanks to lemma \ref{lemma:twistMonotony}, we  can find distinct indices $\{i_0,\ldots,i_r\}$ of $\{0,\ldots,n\}$ such that $i_0=0$, $i_r=n$, $(x^n_{i_0},\ldots,x^n_{i_r})$ is monotone (possibly not strictly monotone) and
\[
E_\omega(x^n_0,\ldots,x^n_n) \geq E_\omega(x^n_{i_0},\ldots,x^n_{i_r})+ \sum_{i\not\in\{i_0,\ldots,i_r\}} E_\omega(x^n_i,x^n_i).
\]
Let $\epsilon>0$ be chosen so that $E_\omega(x,y)\geq \bar E + \epsilon$ for every $|y-x|\leq \epsilon$.
Thus, if $ \theta_n $ denotes the number of indices $1\leq k \leq r$ such that $|x^n_{i_k}-x^n_{i_{k-1}}|>\epsilon$, it is clear that
$ \theta_n \le R/\epsilon $. Since
\[
n \bar E \geq E_\omega(x^n_0,\ldots,x^n_n) \geq (n - \theta_n)(\bar E+\epsilon) + \theta_n \inf_{x,y\in\mathbb{R}} E_\omega(x,y),
\]
we obtain a contradiction by letting $n\to+\infty$.
\end{proof}

We now assume that $(\Omega,\{\tau_t\}_{t \in \mathbb{R}},L)$ is an almost crystalline interaction model. We show in the following proposition that a sequence of configurations
$(x^n_0,\cdots,x^n_n)$ realizing the minimum of the energy $E_\omega(x_0,\ldots,x_n)$ among all configurations of length $n$
admits a weak rotation number in the sense that
\begin{equation}
\liminf_{n\to+\infty} \frac{|x^n_n - x^n_0|}{n} > 0.
\end{equation}
\noindent The existence of a rotation number for an infinite  minimizing configuration $(x_k)_{k\in\mathbb{Z}}$ has been established in \cite{GambaudoGuiraudPetite2006_01}. The following proposition extends partially  this result in two directions: the interaction model is more general; we compute the rotation number of a sequence of configurations of increasing length and not the rotation number of a unique infinite configuration.

\begin{proposition}
\label{proposition:RotationNumber}
Let $(\Omega,\{\tau_t\}_{t\in\mathbb{R}},L)$ be an almost crystalline interaction model. Assume that
\[
\inf_{\omega\in\Omega, \ x\in\mathbb{R}} E_\omega(x,x) > \bar E.
\]
For every $\omega\in \Omega$ and $n\geq1$, let $(x^{n}_0,\ldots,x^{n}_n)$ be a configuration realizing the minimum of the energy
among all configurations of length $n$:
\[
E_\omega(x^n_0,\cdots,x^n_n)=\min_{x_0,\ldots,x_n}E_\omega(x_0,\ldots,x_n).
\]
Then,

-- $\bar E = \lim_{n\to+\infty} \frac{1}{n} E_\omega(x^n_0,\cdots,x^n_n) = \sup_{n\geq1} \frac{1}{n} E_\omega(x^n_0,\cdots,x^n_n)$,

-- for $n$ sufficiently large, $(x^n_0,\cdots,x^n_n)$ is strictly monotone,

-- there is $R>0$ (independent of $ \omega $) such that $\sup_{n\geq1}\sup_{1\leq k \leq n}|x^n_k - x^n_{k-1}| \leq R$,

-- $\liminf_{n\to+\infty} \frac{1}{n} |x^n_n-x^n_0|  > 0$.
\end{proposition}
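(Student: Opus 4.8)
The plan is to deduce all four assertions from two results already at our disposal: Proposition~\ref{proposition:BoundedJumps}, which says that a globally minimizing finite configuration of length $n\ge2$ is strictly monotone with jumps bounded by a fixed $R_0>0$ as soon as its diameter is at least $R_0$; and Lemma~\ref{lemma:UnboundedConfiguration}, which, under the hypothesis $\inf_{\omega,x}E_\omega(x,x)>\bar E$, gives $|x^n_n-x^n_0|\to+\infty$. I would also record the elementary fact that the minimal value $m_n:=\min_{x_0,\dots,x_n}E_\omega(x_0,\dots,x_n)=E_\omega(x^n_0,\dots,x^n_n)$ does not depend on $\omega$ and satisfies $m_n\le n\bar E$.

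First I would establish this last fact. Writing $y_k=x_k-x_0$ and using the topological stationarity of the family $\{E_\omega\}$, one has $E_\omega(x_0,\dots,x_n)=E_{\tau_{x_0}(\omega)}(0,y_1,\dots,y_n)$, hence $m_n(\omega)=\inf_{t\in\mathbb{R}}g(\tau_t(\omega))$ with $g(\omega'):=\inf_{y_1,\dots,y_n}E_{\omega'}(0,y_1,\dots,y_n)$. Exactly as in Part~3 of the proof of Lemma~\ref{lemma:CalibratedManeSubCocycle}, uniform coerciveness confines the infimum defining $g$ to a fixed compact box $\{|y_k|\le kR\}$, so $g$ is continuous; by minimality of $\{\tau_t\}$, $\inf_t g(\tau_t(\omega))=\inf_{\omega'\in\Omega}g(\omega')$, which is independent of $\omega$. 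Fixing the middle point $x_p$ in a configuration of length $p+q$ and applying the translation identity to the two halves gives $m_{p+q}\ge m_p+m_q$, so $\tfrac1n m_n$ increases to $\sup_n\tfrac1n m_n=\lim_n\tfrac1n m_n$, and by Definition~\ref{definition:GlobalGroundEnergy} this common value is $\bar E$. This proves the first assertion and yields $m_n\le n\bar E$.

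Next I would fix $R_0$ from Proposition~\ref{proposition:BoundedJumps} and, using that $\delta_0:=\inf_{\omega,x}E_\omega(x,x)-\bar E>0$ together with the uniform continuity of $L$ on $\Omega\times\overline{B_1(0)}$, choose $\epsilon>0$ so that $E_\omega(x,y)\ge\bar E+\epsilon$ whenever $|y-x|\le\epsilon$, uniformly in $\omega$. Tracking constants in the proof of Lemma~\ref{lemma:UnboundedConfiguration}, the inequality $n\bar E\ge(n-\theta_n)(\bar E+\epsilon)+\theta_n c_0$ with $\theta_n\le R_0/\epsilon$ and $c_0:=\inf_{\omega,t}L(\omega,t)>-\infty$ forces $n\le(R_0/\epsilon^2)(\bar E+\epsilon-c_0)$, so there is a threshold $N$, independent of $\omega$, beyond which $|x^n_n-x^n_0|>R_0$. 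For $n\ge N$ we then have $\max_{k<l}|x^n_k-x^n_l|\ge R_0$, and Proposition~\ref{proposition:BoundedJumps} gives strict monotonicity and $|x^n_k-x^n_{k-1}|\le R_0$. For the finitely many $n<N$, every bond obeys $E_\omega(x^n_{k-1},x^n_k)\le m_n-(n-1)c_0\le n\bar E-(n-1)c_0\le\max_{1\le j\le N}(j\bar E-(j-1)c_0)=:C$, so uniform coerciveness bounds the jump by some $R_1=R_1(C)$. Setting $R:=\max(R_0,R_1)$ settles the second and third assertions.

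Finally, for the rotation-number bound, I would fix $\omega$ and $n\ge N$; by the previous step $(x^n_0,\dots,x^n_n)$ is strictly monotone, say increasing after a harmless reversal, so $\Delta_n:=|x^n_n-x^n_0|=\sum_{k=1}^n(x^n_k-x^n_{k-1})$ is a sum of $n$ positive terms, at most $\Delta_n/\epsilon$ of which exceed $\epsilon$. The remaining at least $n-\Delta_n/\epsilon$ bonds have length $\le\epsilon$ and so cost at least $\bar E+\epsilon$, while every bond costs at least $c_0\le\bar E<\bar E+\epsilon$; summing gives $m_n\ge n(\bar E+\epsilon)-(\Delta_n/\epsilon)(\bar E+\epsilon-c_0)$, and combining with $m_n\le n\bar E$ yields $\Delta_n/n\ge\epsilon^2/(\bar E+\epsilon-c_0)>0$, hence $\liminf_n\tfrac1n|x^n_n-x^n_0|>0$. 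The only genuinely delicate point in this plan is that the threshold $N$ must be chosen uniformly in $\omega$ — equivalently that $\epsilon$ and $c_0$ are uniform — which is exactly what the Lagrangian form of $E_\omega$ and the uniform coerciveness of $L$ provide; everything else is bookkeeping on top of Proposition~\ref{proposition:BoundedJumps} and Lemma~\ref{lemma:UnboundedConfiguration}.
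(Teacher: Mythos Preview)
Your treatment of the first three items is essentially the paper's own: the superadditivity/minimality argument for item~1 is exactly Proposition~\ref{proposition:LocalGroundEnergy}, and items~2--3 are deduced from Lemma~\ref{lemma:UnboundedConfiguration} and Proposition~\ref{proposition:BoundedJumps} just as in the paper's Step~1.

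Your argument for the fourth item, however, is genuinely different and considerably more elementary than the paper's. The paper builds a compatible Kakutani-Rohlin tower $\{F_\alpha\}$, invokes Lemma~\ref{lemma:EquiDistribution} to control the number $N^n_\alpha$ of sites falling in each tower, proves by contradiction that $\sup_n N^n_\alpha<+\infty$ using the transverse measures $\nu_\alpha$ (and hence unique ergodicity), and finally reads off $\liminf_n |x^n_n-x^n_0|/n \ge (\sum_\alpha \nu_\alpha \bar N_\alpha)^{-1}$. You bypass all of this: once the configuration is strictly monotone, the jumps are positive and sum to $\Delta_n$, so at most $\Delta_n/\epsilon$ of them exceed $\epsilon$; the remaining $n-\Delta_n/\epsilon$ short bonds each cost at least $\bar E+\epsilon$, and the energy inequality $m_n\le n\bar E$ then forces $\Delta_n/n\ge \epsilon^2/(\bar E+\epsilon-c_0)$ directly. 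This is a quantitative version of the contradiction in Lemma~\ref{lemma:UnboundedConfiguration}, and it uses neither unique ergodicity, nor transverse measures, nor Lemma~\ref{lemma:EquiDistribution}. What the paper's route buys is finer structural information --- the equidistribution estimate of Lemma~\ref{lemma:EquiDistribution} and the tower framework are reused in the proof of Theorem~\ref{theorem:MainContribution} --- but for the bare statement of the proposition your counting argument is both correct and shorter.
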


\begin{proof} To avoid trivialities, we assume that the flow  $(\Omega,\{\tau_t\}_{t\in\mathbb{R}})$ is not periodic.

{\it Step 1.} The first item has been proved in proposition~\ref{proposition:LocalGroundEnergy}; the limit can be obtained as a supremum
because of superadditivity.  Moreover, from lemma~\ref{lemma:UnboundedConfiguration}, $|x^n_n-x^n_0| \to +\infty$.
From proposition~\ref{proposition:BoundedJumps}, the configuration $(x^n_0,\ldots,x^n_n)$ must be strictly monotone and have
uniformly bounded jumps $R$. We are left to prove the last item of the proposition.

{\it Step 2.} By definition of an almost crystalline interaction model, 
$L$ is transversally constant with respect to some flow box decomposition $\{U_i\}_{i \in I}$ 
(definitions~\ref{definition:BoxDecomposition} and~\ref{definition:TransversallyConstant}). 
Let $\{F_\alpha\}_{\alpha \in A}$ be a Kakutani-Rohlin tower that is compatible with respect to
$\{U_i\}_{i \in I}$ (definition \ref{definition:KakutaniRohlin}) and let $\Sigma = \cup_{\alpha \in A} \Sigma_\alpha$ be its basis. 
We may assume that $\min_{\alpha \in A} H_\alpha$ is as large as we want and, in particular, larger than $R$ 
(see the construction (\ref{equation:InducedTower})).
We also assume that $n$ is sufficiently large so that every tower $F_\alpha$ of basis $\Sigma_\alpha$ is completely cut by the trajectory
$\tau_t(\omega)$ for $ t \in (\min\{x_0^n, x_n^n\}, \max\{x_0^n, x_n^n\})$. We consider $\nu$ the transverse measure to $\Sigma$ (as defined in
lemma~\ref{lemma:TransverseMeasure}) and we denote $\nu_\alpha := \nu(\Sigma_\alpha)$.

{\it Step 3.} Let $S^n<T^n$  be the two return times to $\Sigma$ (namely, $\tau_{S^n}(\omega) \in \Sigma$ and $\tau_{T^n}(\omega) \in \Sigma$) that are
chosen so that $[S^n,T^n)$ is the smallest interval containing the sequence $(x^n_k)_{k=0}^n$. From the definition of a Kakutani-Rohlin tower, $[S^n,T^n)$ can be written as a disjoint union of intervals of  type
$I_{\alpha,i} := [t_{\alpha,i},t_{\alpha,i}+H_\alpha)$, where the list $\{t_{\alpha,i}\}_{i}$, $i=1,\ldots,{C_\alpha^n}$,
denotes the successive return times to $\Sigma_\alpha$ between $S^n$ and $T^n$. We distinguish two exceptional intervals among this list: the two intervals which contain $x^n_0$ and $x^n_n$. If $x_0^n < x_n^n$, then $N_{\alpha,i}^n$ denotes the number of points $(x^n_k)_{k=1}^n$  belonging to $I_{\alpha,i}$ and $N^n_\alpha$ denotes the maximum of  $N^n_{\alpha,i}$.  If $x^n_n < x^n_0$, then $N^n_{\alpha,i}$ and $ N^n_\alpha $ are defined similarly by considering in
this case $(x^n_k)_{k=0}^{n-1}$. From lemma~\ref{lemma:EquiDistribution}, we obtain $N^n_\alpha -2 \leq N^n_{\alpha,i} \leq N^n_\alpha$ for every nonexceptional interval $I_{\alpha,i}$. We show that $\sup_{n\geq1}N^n_\alpha < + \infty$ for every $\alpha \in A$. The proof is done by contradiction.

Let $E^n_{\alpha,i}$ be the energy of the configuration localized in $I_{\alpha,i}$. More precisely, assume first $x^n_0 < x^n_n$;  index the part of $(x^n_k)_{k=1}^n$  in $I_{\alpha,i}$ by $(x^n_{k,\alpha,i})_{k=1}^N$ with $N=N^n_{\alpha,i}$; denote by $x^n_{0,\alpha,i}$ the nearest  point strictly smaller than $x^n_{1,\alpha,i}$ and define the partial energy $E^n_{\alpha,i}:= E_\omega(x^n_{0,\alpha,i},\ldots,x^n_{N,\alpha,i})$. If $x^n_n < x^n_0$, the part of $(x^n_k)_{k=0}^{n-1}$ in $I_{\alpha,i}$ is indexed by $(x^n_{k,\alpha,i})_{k=0}^{N-1}$ with $N=N^n_{\alpha,i}$; denote by $x^n_{N,\alpha,i}$  the nearest  point  strictly larger than $x^n_{N-1,\alpha,i}$  and define $E^n_{\alpha,i}$ similarly.

Thanks to the hypothesis $\inf_{x\in\mathbb{R}} E_\omega(x,x) > \bar E$, one can choose $\epsilon > 0$ such that $E_\omega(x,y) \geq \bar E + \epsilon$ as soon as $|y-x| \leq \epsilon$. Let $\bar H := \max_{\alpha \in A} H_\alpha$. Then, if $ \theta_{\alpha, i}^n $ denotes the number of consecutive points $x^n_{k,\alpha,i}$ in $I_{\alpha,i}$ satisfying $|x^n_{k,\alpha,i} - x^n_{k-1,\alpha,i}| > \epsilon$, obviously $ \theta_{\alpha, i}^n \le \bar H/\epsilon$.
Thus, since $n =  \sum_{\alpha \in A} \ \sum_{1 \leq i \leq C^n_{\alpha}} N^n_{\alpha,i}$, we have that
\begin{align}
n\bar E &\geq E_\omega(x^n_0,\ldots,x^n_n) = \sum_{\alpha \in A} \ \sum_{1 \leq i \leq C^n_{\alpha}} E^n_{\alpha,i}  \notag \\
&\geq \sum_{\alpha \in A} \  \sum_{1 \leq i \leq C^n_{\alpha}}
\Big[\theta_{\alpha, i}^n \inf_{x,y \in \mathbb R} E_\omega(x,y) + \big (N^n_{\alpha,i} - \theta_{\alpha, i}^n \big) (\bar E+\epsilon)\Big] \notag \\
& = n(\bar E+\epsilon)+  \sum_{\alpha \in A} \sum_{1 \leq i \leq C^n_{\alpha}} \theta_{\alpha, i}^n \underline{E}
\, \ge \,  n(\bar E+\epsilon)+  \sum_{\alpha \in A}C^n_\alpha \frac{\bar H}{\epsilon} \underline{E}, \label{eq:RotationNumber_1}
\end{align}
where $\underline{E} := (\inf_{x, y \in \mathbb R} E_\omega(x,y) - \bar E - \epsilon) < 0 $.  For $\alpha$ fixed, among the intervals $(I_{\alpha,i})_i$,   $i=1,\ldots,C^n_\alpha$, at most two of them  are exceptional and the other intervals satisfy  
$N^n_{\alpha,i} \geq N^n_\alpha - 2$. We thus get $n \geq \sum_{\alpha \in A} (C^n_\alpha-2)(N^n_\alpha-2)$. For $n$ sufficiently large, we have
\begin{gather*}
\frac{C^n_\alpha}{T^n-S^n} \leq (1+\epsilon)\nu_\alpha, \quad  \frac{C^n_\alpha-2}{T^n-S^n} \geq (1-\epsilon) \nu_\alpha \quad\text{and} \\
\frac{1}{n} \sum_{\alpha \in A} C^n_\alpha \leq \frac{(1+\epsilon)\sum_{\alpha \in A} \nu_\alpha}{(1-\epsilon)\sum_{\alpha \in A} \nu_\alpha (N^n_\alpha-2)}.
\end{gather*}
If $N^n_\alpha \to +\infty$ for some $\alpha$ and a subsequence $n \to +\infty$, then $\frac{1}{n} \sum_{\alpha \in A} C^n_\alpha \to 0$ and we obtain a contradiction with the previous inequality \eqref{eq:RotationNumber_1}.

{\it Step 4.} For every $\alpha$,  $I_{\alpha, i} \subset [x^n_0,x^n_n]$ except maybe for at most two of them. Then
\[
\frac{|x^n_n-x^n_0|}{n} \geq \frac{\sum_{\alpha \in A} (C^n_\alpha-2)H_\alpha}{\sum_{\alpha \in A} C^n_\alpha N^n_\alpha}.
\]
Denote $\bar N_\alpha := \limsup_{n\to+\infty}N^n_\alpha$. From step 3 we know that $\bar N_\alpha < +\infty$. By dividing by $(T^n-S^n)$ and by letting $n\to+\infty$, we obtain
\[
\liminf_{n\to+\infty} \frac{|x^n_n-x^n_0|}{n} \geq \frac{\sum_{\alpha \in A} \nu_\alpha H_\alpha}{\sum_{\alpha \in A} \nu_\alpha \bar N_\alpha} = \frac{1}{\sum_{\alpha \in A} \nu_\alpha \bar N_\alpha} >0.
\]
\end{proof}

Now we are able to prove theorem~\ref{theorem:MainContribution}.

\begin{proof}[Proof of theorem~~\ref{theorem:MainContribution}]
Let $ (\Omega,\{\tau_t\}_{t\in\mathbb{R}},L) $ be an almost crystalline interaction model.
We discuss two cases.

{\it Case 1.} Either $\inf_{\omega \in \Omega} \inf_{x \in \mathbb{R}} E_\omega(x,x)=\bar E$. Then $E_{\omega_*}(x_*,x_*)=\bar E$ for some $\omega_*$ and $x_*$. By hypothesis, $L$ is transversally constant with respect to a flow box decomposition $\{U_i =\tau[B_{R_i} \times \Xi_i] \}_{i\in I}$. Let $i\in I$ be such that $\tau_{x_*}(\omega_*) \in U_i$. Let $|t_i| < R_i$ and $\omega_i \in \Xi_i$ be such that 
$\tau_{x_*}(\omega_*) = \tau_{t_i}(\omega_i)$. Then
\[
\bar E = E_{\omega_*}(x_*,x_*) = E_{\omega_i}(t_i,t_i) = E_\omega(t_i,t_i), \quad \forall \omega \in \Xi_i.
\]
We have just proved that $\delta_{(\tau_{t_i}(\omega),0)}$ is a minimizing measure for every $\omega \in \Xi_i$. The projected Mather set contains $\tau_{t_i}(\Xi_i)$. By minimality of the flow, we have $\Omega=\tau[B_R \times \Xi_i]$, for some $R>0$,  thanks to item 1 of lemma \ref{lemma:FlowBoxes}. The projected Mather set thus meets  every sufficiently long orbit of the flow.

{\it Case 2.}  Or $\inf_{\omega \in \Omega} \inf_{x \in \mathbb{R}} E_\omega(x,x)>\bar E$. Proposition \ref{proposition:RotationNumber} shows that,  if  $\omega_* \in \Omega$ has been fixed, if  for every $n\geq 1$ a sequence  $(x_k^n)_{0\leq k < n}$  of points of $\mathbb{R}$ realizing the minimum
$ E_{\omega_*}(x_0^n,\ldots,x_n^n) = \min_{x_0,\ldots,x_n} E_{\omega_*}(x_0,\ldots,x_n) $
has been fixed, then

\noindent -- $\bar E = \lim_{n\to+\infty} \frac{1}{n} E_{\omega_*}(x_0^n,\ldots,x_n^n)$,

\noindent -- $(x_k^n)_{0\leq k < n}$ is strictly monotone for $n$ large enough,

\noindent -- there is $R>0$ (independent of $ \omega_* $) such that $ \sup_{n\geq1} \ \sup_{1 \leq k \leq n}  |x_{k}^n - x_{k-1}^n|  < 2 R$,

\noindent -- $ \rho := \liminf_{n\to+\infty} \frac{1}{n} |x^n_n - x^n_0| > 0$.

Let $\mu_{n,\omega_*}$ be the probability measure on $\Omega\times\mathbb{R}$ defined by
\[
\mu_{n,\omega_*} := \frac{1}{n} \sum_{k=0}^{n-1} \delta_{(\tau_{x_k^n}(\omega_*), \, x_{k+1}^n - \, x_k^n)}.
\]
Notice that $\int\! L\,d\mu_{n,\omega_*} = \frac{1}{n}E_{\omega_*}(x_0^n,\ldots,x_n^n)$. Since the consecutive jumps of $x_k^n$ are uniformly bounded, the sequence of measures $\{\mu_{n,\omega_*}\}_{n\geq1}$ is tight. By taking a subsequence, we may assume that $\mu_{n,\omega_*} \to \mu_\infty$ with respect to the weak topology. Moreover, $\mu_\infty$ is holonomic and minimizing.  Let $\Xi \subset \Omega$ be a transverse section of a flow box $\tau[B_{R}\times \Xi]$.
Let $\mathcal{R}_\Xi(\omega_*)$  be the set of return times to $ \Xi $ as defined in lemma~\ref{lemma:TransverseMeasure}.
Let $pr^1:\Omega\times\mathbb{R} \to \Omega$ be the first projection. Then
\begin{align*}
pr^1_*(\mu_{n,\omega_*})(\tau[ B_R\times \Xi]) &= \frac{1}{n} \# \big\{k : x_k^n \in \cup_{t \in \mathcal{R}_{\Xi}(\omega_*)}  B_R(t) \big\} \\
&\geq \frac{1}{n} \#(B_{T_n}(c_n) \cap \mathcal{R}_\Xi(\omega_*)),
\end{align*}
with $T_n := \frac{1}{2} |x^n_n-x^n_0|$ and $c_n := \frac{1}{2}(x^n_0+x_n^n)$. The previous inequality comes from the fact that the intervals $ B_R(t)$ are disjoints and contain at least one $x_k^n$. Then
\begin{align*}
pr^1_*(\mu_{n,\omega_*})(\tau[ B_R\times \Xi]) &\geq \frac{2T_n}{n} \ \frac{\#(B_{T_n}(0) \cap \mathcal{R}_\Xi(\tau_{c_n}(\omega_*))}{\text{\rm Leb}(B_{T_n}(0))}.
\end{align*}
By taking the limit as $n\to+\infty$, one obtains $pr^1_*(\mu_\infty)(\overline{\tau[ B_R\times\Xi]}) \geq \rho \nu_\Xi(\Xi)>0$.
Therefore, since $ \Xi $ is arbitrary, every orbit of the flow of length $2R$ meets the projected Mather set.
\end{proof}

{\footnotesize

}

\end{document}